\documentclass[a4paper]{amsart}

\usepackage[breaklinks,colorlinks]{hyperref}
\usepackage{begnac}

\usepackage{stmaryrd}
\usepackage{eucal}

\usepackage{mathtools}
\newcommand{\sub}{\subseteq}
\DeclarePairedDelimiter\set{\{}{\}}
\DeclareMathOperator{\Sym}{Sym}
\DeclareMathOperator{\UCB}{UCB}
\newcommand{\actson}{\curvearrowright}
\renewcommand{\cl}[2][]{\overline{#2}^{#1}}

\newcommand{\dslash}{\mathord\sslash} 

\usepackage{xcolor}
\definecolor{dark-red}{rgb}{0.7,0.15,0.15}
\definecolor{dark-green}{rgb}{0.15,0.6,0.15}
\definecolor{medium-blue}{rgb}{0,0,0.5}
\hypersetup{
  colorlinks,
  linkcolor={dark-red}, citecolor={dark-green}, urlcolor={medium-blue}}

\usepackage{enumitem}
\setlist[enumerate,1]{label=(\roman*), font=\normalfont}

\numberwithin{equation}{section}

\begin{document}

\title{Locally $\aleph_0$-categorical theories and locally Roelcke precompact groups}

\author{Itaï \textsc{Ben Yaacov}}
\address{Itaï \textsc{Ben Yaacov} \\
  Université Claude Bernard Lyon 1 \\
  Institut Camille Jordan, CNRS UMR 5208 \\
  43 boulevard du 11 novembre 1918 \\
  69622 Villeurbanne Cedex \\
  France}
\urladdr{\url{https://math.univ-lyon1.fr/~begnac/}}

\author{Todor \textsc{Tsankov}}
\address{Todor \textsc{Tsankov} \\
  Université Claude Bernard Lyon 1 \\
  Institut Camille Jordan, CNRS UMR 5208 \\
  43 boulevard du 11 novembre 1918 \\
  69622 Villeurbanne Cedex \\
  France}
\urladdr{\url{https://math.univ-lyon1.fr/~tsankov/}}

\thanks{The authors were partially supported by the ANR project MAS (ANR-25-CE40-5294).}

\date{\today}
\keywords{locally Roelcke precompact groups, locally $\aleph_0$-categorical structures, model theory, Polish groups}
\subjclass[2020]{03C15,03C66,22F50}

\begin{abstract}
  It is well-known that Polish Roelcke precompact groups are the groups that can be represented as automorphism groups of $\aleph_0$-categorical structures in continuous logic and that there is a precise correspondence between properties of the group and properties of the structure.
  The goal of this paper is to extend this correspondence to the classes of locally Roelcke precompact groups and locally $\aleph_0$-categorical structures, the latter of which we define here.

  We characterise locally Roelcke precompact groups in terms of their isometric actions.
  We define locally $\aleph_0$-categorical theories and structures, prove an appropriate version of the Ryll-Nardzewski theorem, and identify the Polish locally Roelcke precompact groups as the automorphism groups of such structures.
  In all locally $\aleph_0$-categorical structures, there is a definable metric, which we call localising, and which captures the coarse geometric structure of the corresponding automorphism group.
  We show that two locally $\aleph_0$-categorical structures are bi-interpretable if and only if their automorphism groups are isomorphic.
  Finally, we show that (the unit ball of) a Banach space is $\aleph_0$-categorical if and only if the corresponding affine space is locally $\aleph_0$-categorical (as a metric space).
\end{abstract}

\maketitle

\tableofcontents


\section*{Introduction}

Recall that a permutation group $G \leq \Sym(M)$ on a set $M$ is called \emph{oligomorphic} if the diagonal action $G \actson M^n$ has finitely many orbits for every $n$.
By a classical theorem of Ryll-Nardzewski, oligomorphic permutation groups are precisely the automorphism groups of $\aleph_0$-categorical structures and if $M$ is such a structure, the definable subsets in $M^n$ are the $G$-invariant sets.
This allows to study oligomorphic groups from different viewpoints and has led to a rich interaction between model theory, group theory, combinatorics, and computer science.

The  correspondence between groups and $\aleph_0$-categorical structures was generalised in \cite{BenYaacov-Tsankov:WAP} to the metric setting.
We  recall that a topological group $G$ is \emph{Roelcke precompact} if it is precompact in the Roelcke uniformity; equivalently, if for every non-empty, open $U \sub G$, there exists a finite $F \sub G$ such that $UFU = G$.
All compact groups are Roelcke precompact and other typical examples are given by the symmetric group $\Sym(A)$ for any set $A$, the unitary group of any Hilbert space, or the isometry group of the Urysohn sphere.

When $(A, d)$ is a metric space and $G \actson A$ is a group action by isometries, we let
\begin{equation*}
  A \dslash G = \set{\cl{Ga} : a \in A}
\end{equation*}
be the collection of orbit closures.
The set distance defines a metric on $A \dslash G$, which is complete if $A$ is complete.

The following is a combination of the Ryll-Nardzewski theorem for continuous logic \cite{BenYaacov-Berenstein-Henson-Usvyatsov:NewtonMS} and results from \cite{BenYaacov-Tsankov:WAP} and \cite{BenYaacov-Kaichouh:Reconstruction}.
\begin{fct*}
  Let $M$ be a separable, continuous logic structure and let $G = \Aut(M)$.
  Then the following are equivalent:
  \begin{enumerate}
  \item The structure $M$ is $\aleph_0$-categorical.
  \item The space $M \dslash G$ is compact, and any of the following equivalent conditions holds:
    \begin{enumerate}
    \item The space $M^n \dslash G$ is compact for every $n$.
    \item The group $G$ is Roelcke precompact.
    \end{enumerate}
  \end{enumerate}

  If these equivalent conditions hold, then, in addition:
  \begin{itemize}
  \item A predicate $P \colon M^n \to \bR$ is definable if and only if it is continuous and $G$-invariant.
  \item An $\aleph_0$-categorical structure $N$ is bi-interpretable with $M$ if and only if $\Aut(N) \cong G$ as topological groups.
  \end{itemize}
\end{fct*}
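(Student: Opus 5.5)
This Fact combines known results, so the proof mainly assembles them, checking that the hypotheses match. Throughout we use the standard identification of the type space with orbit closures. For a separable structure $M$ with $G = \Aut(M)$, every realised type $\operatorname{tp}(\bar a)$ determines the orbit closure $\cl{G\bar a}$. When $M$ is $\aleph_0$-categorical (more generally, approximately $\aleph_0$-saturated and homogeneous), the logic topology on realised types coincides with the metric topology on $M^n \dslash G$, and $d(\cl{G\bar a}, \cl{G\bar b})$ equals the type distance $d(\operatorname{tp}\bar a, \operatorname{tp}\bar b)$.

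For (i)$\Rightarrow$(ii)(a): the continuous Ryll-Nardzewski theorem \cite{BenYaacov-Berenstein-Henson-Usvyatsov:NewtonMS} states that $\aleph_0$-categoricity means the metric and logic topologies agree on every $S_n(T)$, and that $M$ realises all types. Hence $M^n\dslash G \cong S_n(T)$ is compact.

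For (ii)(a)$\Rightarrow$(i): we run the argument in reverse. Compactness of $M^n\dslash G$ means every $n$-type realised in $M$ lies in a metrically compact family. One then shows that the realised types are dense in $S_n(T)$ and form a metrically closed set. Compactness for the metric, which is finer, then forces the two topologies to agree on this set and forces it to exhaust $S_n(T)$. By Ryll-Nardzewski, $T$ is $\aleph_0$-categorical.

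For the equivalence of (ii)(a) and (ii)(b), under the standing hypothesis that $M\dslash G$ is compact, we cite \cite{BenYaacov-Tsankov:WAP}. The group $G$ is Roelcke precompact if and only if $\cl{G\bar a}$ is compact in $M^n \dslash G$ for the relevant tuples. The argument goes as follows. A basic identity neighbourhood is a stabiliser $U = G_{\bar a,\varepsilon}$ of a finite tuple up to $\varepsilon$. The double coset space $U\backslash G/U$ is then approximated by $(\bar a,\bar b)$-orbit closures with $\bar b \in \cl{G\bar a}$. Roelcke precompactness is therefore equivalent to total boundedness of the set of such pairs in $M^{2n}\dslash G$. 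Bootstrapping from $n=1$, which is where compactness of $M\dslash G$ is used, gives all $n$.

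The two additional bullets come next. For definability, a definable predicate is a uniform limit of formulas, hence continuous and invariant. Conversely, a continuous invariant $P$ factors through $M^n\dslash G = S_n(T)$ as a continuous function on the type space. Every such function is a definable predicate. The bi-interpretability statement is the reconstruction theorem of \cite{BenYaacov-Kaichouh:Reconstruction}. The forward direction is routine, since bi-interpretation induces an isomorphism of automorphism groups. The converse recovers $M$, up to interpretation, from the group as a sort of the form $G/$(open subgroup)-completions, using the first bullet to interpret imaginary sorts.

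The main obstacle is the equivalence (ii)(a)$\Leftrightarrow$(ii)(b), specifically passing from Roelcke precompactness to compactness of all $M^n\dslash G$. It needs careful use of the identification of double cosets with orbit closures of pairs, and the assumption that $M\dslash G$ is compact is essential there.
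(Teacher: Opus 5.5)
Your route is the paper's. The paper does not prove this Fact; it assembles it from the continuous Ryll-Nardzewski theorem \cite{BenYaacov-Berenstein-Henson-Usvyatsov:NewtonMS}, \cite{BenYaacov-Tsankov:WAP} and \cite{BenYaacov-Kaichouh:Reconstruction}, as you do. Your sketch of (ii)(a)$\Leftrightarrow$(ii)(b) is the compact case of the paper's own arguments: the map $g \mapsto [\bar a, g\bar a]$ as in \autoref{thm:LocallyRoelckePrecompactProper}, with the bootstrapping from $n=1$ given by \autoref{lem:lRpcActionProduct}. Two of your glosses, however, are wrong as stated.

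First, the opening ``standard identification''. In an arbitrary separable $M$ the orbit closure determines the type, but not conversely; that requires homogeneity. Your ``more generally'' is also false: for approximately $\aleph_0$-saturated homogeneous $M$, the logic and metric topologies differ at any realised non-isolated type. The identification is legitimate in (i)$\Rightarrow$(ii)(a), via \autoref{fct:IsolatedTypeDistance}. In (ii)(a)$\Rightarrow$(i), however, you cannot ``run the argument in reverse'' through it, since homogeneity of $M$ is only available once $\aleph_0$-categoricity is proved. All you need is the following:
\begin{itemize}
\item The map $[\bar a] \mapsto \tp(\bar a)$ is well defined and $1$-Lipschitz from $M^n \dslash G$ to $(\tS_n(T), d)$, since the type distance is at most $d(\bar a, g\bar b)$ for every $g \in G$.
\item Hence the set of realised types is metrically compact, so compact and closed in the coarser logic topology.
\item It is dense for $T = \Th(M)$, so it is all of $\tS_n(T)$; thus $(\tS_n(T),d)$ is compact and the two topologies agree.
\end{itemize}
Also, ``$\cl{G\bar a}$ is compact in $M^n \dslash G$'' is meaningless, since $\cl{G\bar a}$ is a point there. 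You mean total boundedness of $\{[\bar a, g\bar a] : g \in G\}$ in $M^{2n} \dslash G$, as your next sentences say.

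Second, the reconstruction. Recovering $M$ from cosets of open subgroups is the discrete Ahlbrandt--Ziegler mechanism, and it cannot work in continuous logic. $G$ may have no proper open subgroups at all: the unitary group of $\ell^2$ is connected and is the automorphism group of the $\aleph_0$-categorical Hilbert space. The reconstruction instead goes through the left completion $\widehat{G}_L$, equivalently through orbit closures $\cl{G\bar a}$, i.e.\ completions of $G/G_{\bar a}$ for the \emph{closed} stabilisers $G_{\bar a}$.
\begin{itemize}
\item $\widehat{G}_L$ is interpreted in $M$ as the orbit closure of a dense enumeration $\xi \in M^{\bN}$. This is a definable set because $\tp(\xi)$ is isolated.
\item Conversely, $M$ is interpreted in $\widehat{G}_L^G$ as the complete quotient of $\widehat{G}_L \times K$ by the pseudo-metric $d(ga_t, ha_s)$, where $K$ is compact and meets every orbit closure. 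The definability of this pseudo-metric comes from your first bullet.
\end{itemize}
This is exactly what the paper redoes in the local setting in \autoref{lem:Coquand2}. Since you cite the correct source, your assembly stands, but this description of the converse should replace yours.
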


Since every Polish group $G$ can be represented as the automorphism group of a separable structure $M$ with $M \dslash G$ a singleton, this gives a characterisation of the Roelcke precompact groups: they are exactly the automorphism groups of $\aleph_0$-categorical structures.
Moreover, one can establish a precise correspondence between dynamical properties of $G$ on one side and model-theoretic properties of $M$ on the other.
See \cite{BenYaacov-Tsankov:WAP,Ibarlucia:DynamicalHierarchy,BenYaacov-Ibarlucia-Tsankov:Eberlein} for more details.

\medskip

The goal of the present paper is to establish a similar correspondence between natural generalisations of Roelcke precompact groups and $\aleph_0$-categorical structures, both of which may be qualified, for different reasons, as ``local''.

Locally Roelcke precompact groups, introduced by Rosendal~\cite{Rosendal:CoarseGeometry}, are topological groups in which the identity admits a Roelcke precompact neighbourhood.
They were also considered by Zielinski~\cite{Zielinski:LocallyRoelckePrecompact}, who characterised the locally Roelcke precompact groups as the ones whose Roelcke completion is locally compact and by Hallbäck~\cite{Hallback:PhD}, who studied some model-theoretic aspects.
Examples include the Roelcke precompact groups, the locally compact groups, isometry groups of metrically homogeneous graphs, affine isometry groups of Hilbert spaces, the isometry group of the infinite-dimensional hyperbolic space, and the isometry group of the Urysohn metric space.

On the model-theoretic side, we should like to call the structures from the previous paragraph \emph{locally} $\aleph_0$-categorical.
Their common feature is that models of the theory can be represented as disjoint unions at infinite distance of \emph{local components}, with no interaction between them, and the structure itself is the unique separable \emph{local model} (i.e., consisting of a single component).
The term ``local'' is borrowed from Hrushovski~\cite{Hrushovski:BeyondLascar}, where related notions are considered.

\medskip

The main new feature that distinguishes locally Roelcke precompact groups from the Roelcke precompact ones is the presence of coarse geometric considerations.
This means that one is no longer only concerned with what happens near the identity, but also at large distances.
Our main result concerning locally Roelcke precompact groups is a characterisation in terms of their isometric actions (see \autoref{thm:LocallyRoelckePrecompactProper}).
We recall that a metric space is \emph{proper} if all closed balls are compact and an action of a topological group $G$ on a metric space $(X, d)$ is \emph{coarsely proper} if for every $x \in X$ and $r \in \bR$, the set $\set{g \in G : d(x, gx) < r}$ is coarsely bounded in $G$ (see \cite{Rosendal:CoarseGeometry} for more details).
\begin{thm*}
  Let $X$ be a complete separable metric space and let $G \leq \Iso(X)$ be such that $X \dslash G$ is a proper metric space.
  Then the following are equivalent:
  \begin{enumerate}
  \item The group $G$ is locally Roelcke precompact and the action $G \curvearrowright X$ is coarsely proper.
  \item The quotient $X^n \dslash G$ is a proper metric space for every $n$.
  \end{enumerate}
\end{thm*}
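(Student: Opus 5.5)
We prove both implications by working with the sets
\begin{equation*}
  V_{x,r} = \set{g \in G : d(x, gx) < r}, \qquad V_{\bar a, \varepsilon} = \set{g \in G : d(g a_i, a_i) < \varepsilon \text{ for all } i},
\end{equation*}
where $\bar a = (a_1, \ldots, a_n)$. The sets $V_{\bar a, \varepsilon}$ form a basis of identity neighbourhoods in $G \leq \Iso(X)$ with the pointwise convergence topology. The basic dictionary is the following. A subset of $X^n \dslash G$ is totally bounded precisely when the corresponding set of tuples can be covered, up to $\varepsilon$, by finitely many $G$-orbits. Since $X^n \dslash G$ is complete, properness amounts to total boundedness of bounded sets. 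For $n=1$ this is the hypothesis.

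\textbf{(i) $\Rightarrow$ (ii).} Fix $\bar a$ and $R > 0$, and consider the ball of radius $R$ around $[\bar a]$ in $X^n \dslash G$. Any $\bar b$ in it can be moved by some $h \in G$ so that $d(h b_i, a_i) < R + 1$ for all $i$. Since $X \dslash G$ is proper, we may fix a base point $x = a_1$ and reduce to tuples lying in a fixed bounded region around $x$.

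The first task is to see that bounded regions are controlled by coarsely bounded sets. By coarse properness, the set $V_{x,r}$ is coarsely bounded for each $r$. In a locally Roelcke precompact group, coarsely bounded sets are Roelcke precompact relative to a Roelcke precompact neighbourhood $U$ (Rosendal / Zielinski). Concretely, for each identity neighbourhood $W$ there is a finite $F$ with $V_{x,r} \sub W F W$; we will cite or reprove this.

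Next we cover the tuples. Properness of $X \dslash G$, applied to each coordinate near $x$, lets us pick finitely many representatives. We then use the Roelcke-type decomposition $W F W$ with $W = V_{\bar a, \varepsilon}$ to show that any tuple $\bar b$ near $\bar a$ in the quotient is, up to $\varepsilon$, in one of finitely many orbits $G f \bar c$. The hard part is setting this up correctly. The natural approach is to take $\bar b$ realised as $g \bar c$ for finitely many reference tuples $\bar c$ in an $\varepsilon$-net, obtained by induction on $n$. Controlling $g$ forces $g \in V_{x,r}$ for a suitable $r$, and then $W F W$ gives finitely many classes. So we argue by induction on $n$, adding one coordinate at a time: the $(n+1)$-th coordinate, modulo the stabiliser-neighbourhood of the first $n$, ranges over a totally bounded set.

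\textbf{(ii) $\Rightarrow$ (i).} Fix $\bar a$ and $\varepsilon$, and set $U = V_{\bar a, \varepsilon}$. We claim $U$ is Roelcke precompact. Given a neighbourhood $W = V_{\bar b, \delta}$, we may enlarge $\bar b$ to contain $\bar a$. For $g \in U$, the pair $(\bar b, g\bar b)$ lies in a bounded subset of $X^{2m} \dslash G$, which is compact. We choose a finite $\delta$-net of orbit classes, represented by pairs $(\bar b, f\bar b)$ with $f \in F$. For each $g$, some $h$ then satisfies $h\bar b \approx \bar b$ and $h g \bar b \approx f \bar b$. Hence $h \in W$ and $f^{-1} h g \in W$, so $g \in W F W$. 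This is the standard argument from the Roelcke precompact case; the only new ingredient is boundedness.

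The same computation with $U$ replaced by $V_{x,r}$ shows that $V_{x,r} \sub W F W$ for every $W$. This means $V_{x,r}$ is Roelcke precompact, hence coarsely bounded, which gives coarse properness.

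The main obstacle is (i) $\Rightarrow$ (ii). There we must pass from the group-level statement that coarsely bounded sets are relatively Roelcke precompact to total boundedness of tuples, and this requires the inductive covering argument together with the Rosendal–Zielinski characterisation of coarsely bounded sets in locally Roelcke precompact groups.
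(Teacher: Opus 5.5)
Your proposal is correct and follows the paper's route. Your (ii)$\Rightarrow$(i) is exactly the paper's argument through its item (iii): map $g \mapsto [\bar b, g\bar b]$, use total boundedness of the bounded image to get $V_{x,r} \subseteq WFW$, and conclude local Roelcke precompactness and coarse properness. Your inductive step for (i)$\Rightarrow$(ii) is the paper's \autoref{lem:lRpcActionProduct}: coarse properness puts the witnessing elements in a coarsely bounded set, Zielinski's fact places the relative element $g^{-1}h$ in $UFU$, and finitely many classes $[x_i, f y_j]$ result. When writing it out, note that the neighbourhood used in the decomposition must approximately fix the finitely many net representatives of both blocks (as the paper's $U$ does), not $\bar a$.
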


A (non-trivial) coarse geometric structure is most conveniently coded by an (unbounded) metric.
Model theoretically, we can view an unbounded metric $d$ as a definable predicate, provided that we allow it to take values in the compact interval $[0,\infty]$.
This makes $d$ a \emph{generalised metric}, possibly attaining the value infinity in other models of the theory.

If $d$ is a definable generalised metric in a theory $T$, then the relation given by $d(a, b) < \infty$ on models of $T$ is \emph{open}, in the sense that it is given by an open set in the space of types $\tS_2(T)$, and it is an equivalence relation.
The converse of this is a metrisation theorem that will be very useful for us but which we also believe to be of independent interest (see \autoref{thm:DefinableMetrisation}).
\begin{thm*}
  Let $T$ be a theory in a separable language, and let $\sim$ be an equivalence relation on models of $T$ given by an open set of $2$-types.
  Then $T$ admits a definable generalised metric $d$, unique up to uniform and coarse equivalence, such that $d(a,b) < \infty$ if and only if $a \sim b$.
\end{thm*}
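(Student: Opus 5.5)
We build $d$ as a countable supremum/infimum construction from definable predicates that witness the openness of $\sim$, in the manner of the Birkhoff--Kakutani metrisation argument.

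\textbf{Setup.} Since the language is separable, $\tS_2(T)$ is a compact metrisable space. The open set $U=\set{\operatorname{tp}(a,b): a\sim b}$ is therefore a countable union of closed (hence compact) sets. Each of these can be separated from the complement of $U$ by a continuous function, that is, by a definable predicate.

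\textbf{Step 1: an exhaustion.} We choose definable predicates $\phi_n(x,y)$ with values in $[0,1]$ such that:
\begin{itemize}
\item $\phi_n=0$ outside $U$;
\item the zero-sets of $1-\phi_n$ increase and exhaust $U$.
\end{itemize}
We then symmetrise, replacing $\phi_n(x,y)$ by $\min(\phi_n(x,y),\phi_n(y,x))$. We also arrange that $\phi_0$ is a definable witness that $x=y$ lies in $U$, via the metric of the language: for instance, we can include the condition $d_0(x,y)\le 1/2 \Rightarrow x\sim y$, where $d_0$ is the truncated original metric.

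\textbf{Step 2: compact sets of pairs.} Let $K_n=\set{p:\phi_n(p)=1}\subseteq U$, a compact set. Because $\sim$ is transitive, the set $K_n\circ K_n$ of types $\operatorname{tp}(a,c)$ with $b$ such that $(a,b)\in K_n$ and $(b,c)\in K_n$ is again a compact subset of $U$. Here we use compactness of type spaces: $K_n\circ K_n$ is the projection of a closed set of $3$-types. Compactness of $K_n\circ K_n$ means that, after passing to a subsequence, we may assume $K_n\circ K_n\subseteq K_{n+1}$.

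\textbf{Step 3: the length function.} Define a pseudo-length by
\[
  \ell(a,b)=\inf\{n : (a,b)\in K_n\},
\]
with the convention $\ell=\infty$ off $U$. The inclusion $K_n\circ K_n\subseteq K_{n+1}$ makes $\ell$ quasi-ultrametric, satisfying $\ell(a,c)\le \max(\ell(a,b),\ell(b,c))+1$. We smooth $\ell$ into a definable function: replace the indicator of $K_n$ by the continuous $\phi_n$, and set
\[
  \rho(a,b)=\sum_n (1-\phi_n(a,b))\in[0,\infty].
\]
We take the path metric
\[
  d(a,b)=\inf\Bigl\{\sum_i \bigl(\rho(c_i,c_{i+1})+d_0(c_i,c_{i+1})\bigr)\Bigr\}
\]
over finite chains from $a$ to $b$. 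Adding $d_0$ keeps $d$ uniformly equivalent to the original metric at small scales, while $\rho$ governs the large scales. This $d$ is finite exactly on $\sim$-classes, since $\rho<\infty$ precisely on $U$.

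\textbf{Main obstacle: definability of $d$.} An infimum over chains of unbounded length is not a priori a definable predicate. The way around this is the quasi-ultrametric bound from Step 3, which lets a chain of length $m$ be compared with a single step in $K_{n+O(\log m)}$. This shows that $d$ is bi-Lipschitz (up to an additive constant at large scale) to an infimum over chains of bounded length. Since $T$-definable predicates with values in $[0,\infty]$ are closed under such uniform limits, the truncations $\min(d,k)$ are definable for each $k$, which gives definability of $d$. We take some care so that the values $\infty$ are attained exactly off $U$, using that each $\phi_n$ is $0$ off $U$ together with the compactness of $K_n$.

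\textbf{Uniqueness.} Let $d'$ be another definable generalised metric with the same finiteness relation. The sets $\set{d'\le r}$ are compact subsets of $U$ that exhaust $U$, as $r$ increases, by openness of $\set{d'<r}$. By compactness, each $\set{d\le r}$ lies in some $\set{d'\le s(r)}$ and vice versa, which gives coarse equivalence. At small scales, compactness of type spaces gives uniform equivalence, since $\set{d=0}=\set{d'=0}$ is the diagonal.
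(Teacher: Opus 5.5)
\paragraph{The gap: definability of the chain metric.}
The gap is at the step you yourself flag as the main obstacle, and the argument you give there does not close it. Write $d_{\le m}$ for the infimum over chains with at most $m$ steps. Each $d_{\le m}$ is definable, but your $d=\inf_m d_{\le m}$ is a decreasing limit of continuous functions on $\tS_2(T)$, hence only upper semicontinuous. By Dini's theorem, $d$ is definable if and only if $d_{\le m}\to d$ uniformly (in the uniformity of $[0,\infty]$). That is, every chain must be replaceable by one of bounded length at arbitrarily small extra cost, with $m$ independent of the endpoints.

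Your argument does not provide this. A bi-Lipschitz comparison, up to an additive constant, between $d$ and some definable predicate is not a uniform approximation and says nothing about continuity of $d$ on types. The phrase ``closed under such uniform limits'' refers to a limit you never produce. The quasi-ultrametric inequality cannot supply it either, for two reasons:
\begin{itemize}
\item Merging two steps through $K_n\circ K_n\subseteq K_{n+1}$ costs an additive $1$, not an $\varepsilon$.
\item The inequality holds for $\ell$, not for $\rho$. Since $1-\phi_n$ may be arbitrarily small just outside $K_n$, $\rho$ is not bounded below by $\ell-C$. Moreover, steps inside $K_0$ have zero $\rho$-cost, so the $\log m$ bound does not control the $\rho$-cost of long chains.
\end{itemize}
There are also two smaller slips. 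Step~2 needs the interiors of the $K_n$ to exhaust $U$; otherwise a compact subset of $U$ need not lie in any single $K_n$. And $d_0(x,y)\le 1/2\Rightarrow x\sim y$ is false in general; it must be replaced by $d_0\le\varepsilon$ for some $\varepsilon$ obtained by compactness.

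\paragraph{The missing idea.}
Build the triangle inequality into a single continuous function on $\tS_2(T)$, rather than imposing it afterwards with chains, which is what destroys definability. The paper does this in Urysohn style:
\begin{itemize}
\item Fix a countable dense $S\subseteq\bR^{>0}$.
\item Construct symmetric open sets $U_s\subseteq\tS_2(T)$ with $\Delta\subseteq U_s\subseteq\overline{U_s}\subseteq{\sim}$, and with $\prod_{i<n}\overline{U_{s_i}}\subseteq U_t$ whenever $\sum_{i<n}s_i<t$.
\item Add these one at a time, using the following lemma (a compactness argument on $3$-types, relying on amalgamation): if $\prod_i K_i\subseteq U$ with the $K_i$ compact and $U$ open, then there are open $V_i\supseteq K_i$ with $\prod_i\overline{V_i}\subseteq U$.
\item Use separability to also ensure $\bigcap_s U_s=\Delta$ and $\bigcup_s U_s={\sim}$.
\end{itemize}
Then $d(p)=\inf\{s\in S: p\in U_s\}$ is continuous, because $\overline{U_s}\subseteq U_t$ for $s<t$. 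The triangle inequality follows immediately from the product condition, so no infimum over chains ever appears.

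Your uniqueness argument is fine and is essentially the paper's.
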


We define a \emph{weakly local theory} as one whose models can be presented as disjoint unions of \emph{local components} with no interaction between them.
Formally, this is a pair $(T, \sim)$, where $T$ is a theory and $\sim$ is an invariant equivalence relation on models of $T$, such that every $\sim$-class (\emph{local component}) of a model of $T$ is an elementary substructure and a map between models of $T$ is elementary if and only if it preserves $\sim$ and is elementary on each component separately (see \autoref{dfn:WeaklyLocalTheory}).
If the equivalence relation $\sim$ is of the form $d(x,y) < \infty$ for a definable generalised metric $d$, then $d$ is unique up to coarse and uniform equivalence, and we say that it is \emph{localising}.

A theory in a separable language is \emph{locally $\aleph_0$-categorical} if it is weakly local with respect to some equivalence relation $\sim$, and it admits a unique separable \emph{local model} (i.e., consisting of a unique component) up to isomorphism (see \autoref{dfn:LocallyA0Categorical}).
When $\sim$ is the trivial equivalence relation (by which all members are equivalent), this coincides with ordinary $\aleph_0$-categoricity.

In this paper, we do \emph{not} attempt to give a general definition of a local theory (or local logic) outside of the locally $\aleph_0$-categorical context.

In order to give a Ryll-Nardzewski-style characterisation of locally $\aleph_0$-categorical theories, we require a slight weakening of the notion of an isolated type.
Let $a$ be an $n$-tuple, and $p = \tp(a)$.
For $i,j < n$, say that $i \sim_p j$ if $\tp(a_i,a_j)$ is isolated.
We say that $p$ is \emph{locally isolated} if, first, $\sim_p$ is an equivalence relation on $n$, second, the restriction of $p$ to each equivalence class of indices is isolated, and third, $p$ is determined by $\sim_p$ and the restrictions to its equivalence classes (see \autoref{dfn:LocallyIsolated}).

With this definition, the Ryll-Nardzewski characterisation of $\aleph_0$-categorical theories, as these in which all types are isolated, generalises as follows (see \autoref{thm:LocalRyllNardzewski}).
\begin{thm*}
  A weakly local theory $(T, \sim)$ is locally $\aleph_0$-categorical if and only if it is complete and all types are locally isolated.

  Moreover, if these equivalent conditions hold, then: $a \sim b$ if and only if $\tp(a,b)$ is isolated; the unique separable local model of $T$ is its prime model; the isolated types in $\tS_n(T)$ form an open set for each $n$; and $T$ admits a localising metric, i.e., a definable generalised metric $d$ such that $d(a, b) < \infty$ if and only if $a \sim b$.
\end{thm*}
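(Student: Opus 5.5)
The proof adapts the classical Ryll-Nardzewski argument (omitting types plus back-and-forth) in continuous logic, applied component by component. The weakly local structure then reassembles the components into a whole.

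\emph{Forward direction.} Assume $(T,\sim)$ is locally $\aleph_0$-categorical with unique separable local model $M$. For completeness: any two models of $T$ contain separable elementary substructures, and taking one component of each gives local models. By uniqueness these are isomorphic, so all models are elementarily equivalent. Next, let $\sim'$ denote the relation ``$\tp(a,b)$ is isolated''. I show that a type $p(x_0,\dots,x_{n-1})$ whose realisations are pairwise $\sim$-related is isolated. Otherwise the continuous-logic omitting types theorem (separable language) yields a separable model omitting $p$; any component of it is a local model omitting $p$. Realising $p$ by a tuple inside one component and taking a separable elementary substructure of that component gives another local model which realises $p$, contradicting uniqueness.

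Conversely, suppose $\tp(a,b)$ is isolated but $a\not\sim b$. Isolated types are realised in the prime model, hence in every model, and in particular in the local model $M$, where all elements are $\sim$-related. In $M$ realise $\tp(a,b)$ by $(a',b')$ with $a'\sim b'$. Since $\sim$ is invariant under elementary maps, this contradicts $a\not\sim b$. Therefore $\sim$ coincides with $\sim'$, so $\sim_p$ is an equivalence relation and its classes have isolated restrictions. Finally, $p$ is determined by $\sim_p$ and these restrictions, because a map between models is elementary as soon as it preserves $\sim$ and is elementary on each component.

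\emph{Backward direction.} Assume $T$ is complete and all types are locally isolated. First show $a\sim b\iff\tp(a,b)$ isolated. If $\tp(a,b)$ is isolated, then in the prime model (which exists, by completeness together with density of isolated types coming from local isolation) it is realised inside a single component. Here one must use that the prime model is local, which I expect to prove by a Ryll-Nardzewski style argument. The converse direction uses that the realisations of $\tp(a,b)$ with $a\not\sim b$ can be moved to different components, in which case the type is not isolated.

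With $\sim$ equal to ``isolated $2$-type'', every local model realises only types whose restrictions to tuples in it are isolated. A continuous back-and-forth, as in the classical proof of Ryll-Nardzewski in continuous logic, then shows that any two separable local models are isomorphic, and that each is prime.

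\emph{Moreover clauses.} Openness of the set of isolated types in $\tS_n(T)$ follows from the identification: a type is isolated iff all its coordinates are pairwise $\sim$-related. Since $\sim$ is given by an invariant relation, I would argue it is given by an open set of $2$-types, using that isolation is open-ish in $\aleph_0$-categorical local pieces. The localising metric is then obtained by applying \autoref{thm:DefinableMetrisation} to $\sim$.

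The main obstacle is the topological step: proving that $\{\tp(a,b): a\sim b\}$ is open in $\tS_2(T)$, i.e.\ that the isolated $2$-types form an open set. In the metric setting isolation means the type is in the topological interior modulo $d$-closeness. I would first try to show that the set of isolated types is invariant under the $d$-perturbation topology and that it is open using local isolation. Whenever a type $q$ lies near an isolated $p$, it should itself be $\sim$-connected, hence isolated by the part already proved.
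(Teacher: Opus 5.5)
Your forward direction is sound and is essentially the paper's argument. The only difference is that you invoke omitting types explicitly where the paper uses that the unique separable local model is prime, hence atomic. For the last clause of local isolation you still have to build an actual map of models. Embed each component of a separable model realising $p$ into a sufficiently saturated model realising $q$, sending $a_I \mapsto b_I$, and send the components containing no coordinate to distinct unused components.

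The backward direction, however, has a genuine gap at its first step. Neither of your two sentences establishes $a \sim b \Rightarrow \tp(a,b)$ isolated. The second only restates the first in contrapositive form, and the first holds simply because isolated types are realised in every local component. Moreover, this implication does not follow from the hypotheses. Take $T = \Th(\bZ,s)$ with $\sim$ the total relation. Then $(T,\sim)$ is vacuously weakly local, and $T$ is complete with all types locally isolated. Yet elements of distinct $\bZ$-chains are $\sim$-related with non-isolated type. Also $(T,\sim)$ is not locally $\aleph_0$-categorical, since $\bZ$ and $\bZ \sqcup \bZ$ are both separable local models. This is why the precise version, \autoref{thm:LocalRyllNardzewski}(ii), takes the identification of $\sim$ with joint isolation as a hypothesis (cf.\ \autoref{dfn:LocallyA0Categorical}), while assuming only that $\sim$ is invariant.

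The substance of the backward direction is then to prove that joint isolation is an equivalence relation whose classes are elementary submodels. Nothing in your proposal plays this role. The paper needs three lemmas:
\begin{itemize}
\item \autoref{lem:LocallyIsolatedStep0}: $1$-types are isolated, and close realisations of the same $1$-type $p$ are jointly isolated, because at most one non-isolated $2$-type extends $p(x) \cup p(y)$.
\item \autoref{lem:LocallyIsolatedStep1}: isolated types are dense in the fibre over an isolated type. One perturbs a non-isolated $p$ by types converging to it in the logic topology but not metrically, and moves their restrictions exactly onto the isolated restrictions of $p$. The resulting types share these restrictions with $p$ but differ from it, so they must be isolated, since local isolation allows only one non-isolated type with given restrictions.
\item \autoref{lem:LocallyIsolatedStep2}: the Tarski--Vaught argument showing that each class is an elementary submodel.
\end{itemize}
In particular, your claim that isolated types are dense ``by local isolation'' is not automatic; it is essentially what \autoref{lem:LocallyIsolatedStep1} provides. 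Once weak locality is available, local models are atomic, and uniqueness is just the uniqueness of separable atomic models.

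The openness clause is also left unproved. Invariance of $\sim$ gives no openness, since an invariant relation is an arbitrary set of $2$-types. Your remark that a type near an isolated one ``should be $\sim$-connected'' is a restatement of the claim. The missing idea, from \autoref{lem:LocallyIsolatedStep3}, is to show that the non-isolated $2$-types form a closed set. Suppose non-isolated $p_k \to p$. The restrictions $q_k, r_k$ converge metrically because $1$-types are isolated. After passing to a subsequence, you can realise them by Cauchy sequences $a_k \vDash q_k$ and $b_k \vDash r_k$ lying in two distinct local components of a non-local model, which exists as soon as some type is non-isolated. Local isolation forces $\tp(a_k,b_k) = p_k$. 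The limits $a$, $b$ stay in distinct (complete) components, so $p = \tp(a,b)$ is non-isolated. The reduction from $n$-types to $2$-types and the final appeal to \autoref{thm:DefinableMetrisation} are then as you say.
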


Say that a structure $M$ is \emph{locally $\aleph_0$-categorical} if it is the unique separable local model of its theory.
As we recalled in the beginning, $M$ is $\aleph_0$-categorical if and only if $M^n \dslash G$ is compact for all $n$, if and only if $G$ is Roelcke precompact and $M \dslash G$ is compact, where $G = \Aut(M)$.
An analogous characterisation of local $\aleph_0$-categoricity, solely in terms of the action of $G$, is not possible.
For example, $(\bZ, s)$, where $s$ is the successor function, and $(\bZ, <)$, are two structures with the same underlying set and (locally compact) automorphism group $(\bZ,+)$.
The first is locally $\aleph_0$-categorical (see \autoref{exm:ZSucc}). However, the second is not because the order is a non-local relation, in the sense that it allows interactions between distinct local components (\autoref{exm:ZOrd}).
The problem lies in the fact that the action of the automorphism group only provides information about the realised (equivalently, isolated) types, while the theory of $M$ admits non-isolated types (unless $M$ is actually $\aleph_0$-categorical).

To overcome this obstacle, we need to prescribe the behaviour of definable predicates at infinity.
To that end, we introduce the notion of proper $G$-invariance.
A unary predicate is \emph{properly $G$-invariant} if it is $G$-invariant, and a binary predicate $P$ is \emph{properly $G$-invariant} if it is $G$-invariant and $\lim_{g \to \infty} P(a, gb)$ exists for all $a, b \in M$ (here $g$ is an element of $G$ and $g \to \infty$ means that $g$ eventually leaves every coarsely bounded set).
The definition for higher arities is more complex and we postpone it to \autoref{dfn:ProperlyInvariant}.

A key technical tool is the construction of a metric structure $A^G$ from a complete, separable metric space $A$ together with a closed subgroup $G \leq \Iso(A)$ (see \autoref{dfn:GroupActionStructure}).
The structure $A^G$ is characterised by the following: it is the prime model of its theory, $\Aut(A^G) = G$, and the collection of definable predicates on $A^G$ is minimal given these two properties (\autoref{prp:GroupActionStructureMinimal}).

With this in hand, we can state the full analogue of the characterisation of $\aleph_0$-categorical structures. The following is a summary of our main results (see \autoref{cor:LocallyA0CategoricalAutGroup}, \autoref{thm:GroupActionStructure}, \autoref{thm:LocallyA0CategoricalDefinable}, \autoref{cor:LocallyA0CategoricalDefinitionEquivalent}, \autoref{thm:LocalCoquand}, \autoref{thm:LocallyA0CategoricalReduct}, and \autoref{cor:LocallyA0CategoricalSufficient}).
\begin{thm*}
  Let $M$ be a separable, continuous logic structure with a (possibly unbounded) metric $d$ and let $G = \Aut(M)$.
  Then the following are equivalent:
  \begin{enumerate}
  \item $M$ is locally $\aleph_0$-categorical and $d$ is a localising metric.
  \item \label{i:intro:bigth:cond} The metric space $M \dslash G$ is compact, every atomic formula in $M$ is properly $G$-invariant, and any of the following equivalent conditions holds:
    \begin{enumerate}[ref=(\roman{enumi}.\alph*)]
    \item \label{i:intro:bigth:cond:proper} The metric space $M^n \dslash G$ is proper for every $n$.
    \item The group $G$ is locally Roelcke precompact and the action $G \actson M$ is coarsely proper.
    \end{enumerate}
  \end{enumerate}

  If these equivalent conditions hold, we have in addition:
  \begin{itemize}
  \item The structure $M$ is bi-definable with $M^G$.
  \item A predicate $P \colon M^n \to [-\infty, \infty]$ is definable if and only if it is uniformly continuous and properly $G$-invariant.
  \item A locally $\aleph_0$-categorical structure $N$ is bi-interpretable with $M$ if and only if $\Aut(N) \cong G$ as topological groups.
  \end{itemize}
\end{thm*}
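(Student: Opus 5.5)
This summary theorem is assembled from the individual results listed before it, so the plan is to fix the dependency order and describe how the pieces fit.

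\textbf{(i)$\Rightarrow$(ii).} Assume $M$ is locally $\aleph_0$-categorical with localising metric $d$. By the local Ryll-Nardzewski theorem, all types are locally isolated and $a \sim b$ if and only if $\tp(a,b)$ is isolated.

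Compactness of $M\dslash G$ comes from the fact that $\tS_1(T)$ is compact and every $1$-type is isolated. Since $M$ is the prime model, isolated types are realised, and in a prime separable model the closures of orbits correspond to realised types. Hence $M \dslash G$ is homeomorphic to $\tS_1(T)$.

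Properness of $M^n\dslash G$ follows the same pattern. A closed ball around the orbit of $a$ consists of tuples $b$ with $d(a_i,b_i)\le r$. These are exactly the tuples lying in one local component, so their types are isolated. Tuples at bounded distance from $a$ form a closed set of $n$-types inside the open set of isolated types. That closed set is compact, because the localising metric $d$ is definable and a bound on $d$ is a closed condition. Therefore the ball is compact.

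Proper $G$-invariance of atomic formulas comes from the locally isolated structure of types. As $g\to\infty$, the pair $(a,gb)$ leaves every bounded set, so $d(a,gb)\to\infty$. Local isolation says that the limiting type of $(a,gb)$ is determined by the pair $\tp(a)$, $\tp(b)$ with the two coordinates in different classes. So the limit $\lim_{g \to \infty} P(a,gb)$ exists. For higher arities the same argument applies, using the splitting into $\sim_p$-classes.

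The equivalence of (a) and (b) is \autoref{thm:LocallyRoelckePrecompactProper}, applied with $X=M$. Its hypothesis holds because $M\dslash G$ is compact, hence proper.

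\textbf{(ii)$\Rightarrow$(i).} This is the harder direction. Here I would use the structure $M^G$ and the result that it is the prime model of its theory with $\Aut(M^G)=G$. The key step is to show that $M^G$ is locally $\aleph_0$-categorical, as in \autoref{thm:GroupActionStructure}. The intended argument is that types of $M^G$ are limits of configurations, and properness of $M^n\dslash G$ together with proper invariance of the generating predicates forces every type to be locally isolated.

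Next I would show that every uniformly continuous, properly $G$-invariant predicate is definable in $M^G$. This is the analogue of \autoref{thm:LocallyA0CategoricalDefinable}, proved by a Stone--Weierstrass argument on the compactified type spaces. It gives that the atomic formulas of $M$ are definable in $M^G$. Minimality of $M^G$ then gives the converse inclusion, so $M$ and $M^G$ are bi-definable and $M$ is locally $\aleph_0$-categorical.

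Finally, $d$ is localising. Coarse properness of the action makes $d(a,gb)\to\infty$ exactly along $g\to\infty$, which identifies the components.

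\textbf{Addenda.} The definability criterion is exactly the previous step. Bi-interpretability when automorphism groups are isomorphic follows from \autoref{thm:LocalCoquand}: interpret both structures in a common $M^G$-type structure, for instance the action of $G$ on $M\sqcup N$.

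\textbf{Main obstacle.} I expect the hardest step to be showing that the theory of $M^G$ is weakly local, with no interaction at infinity. The plan for this is to check that non-realised types split as products of realised local types. That splitting uses precisely the limit condition in proper $G$-invariance together with coarse properness.
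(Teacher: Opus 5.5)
Your treatment of (i)$\Rightarrow$(ii), of (a)$\Leftrightarrow$(b) via \autoref{thm:LocallyRoelckePrecompactProper}, and of the addenda matches the paper. The gap is in (ii)$\Rightarrow$(i), at the step ``Minimality of $M^G$ then gives the converse inclusion''.

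The minimality statement is the second half of \autoref{prp:GroupActionStructureMinimal}. It only applies to a structure on $M$ with automorphism group $G$ that is \emph{the prime model of its own theory}. Its proof needs each orbit closure $[a]$ to be the set of realisations of an isolated type of $\Th(M)$, so that $D_a = d(x,\tp(a))$ is definable by \autoref{fct:IsolatedTypeDistance}. At that point you only know two things: $M^G$ is prime for $\Th(M^G)$, and $M$ is a reduct of $M^G$. Atomicity does not pass to reducts in general, and local $\aleph_0$-categoricity is explicitly not preserved under reducts (\autoref{exm:ZBiColour}). So you still owe a proof that every type of $\Th(M)$ realised in $M$ is isolated, and nothing in your plan provides it.

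Your closing claim that $d$ is localising ``by coarse properness'' has the same defect. Being localising concerns non-realised types of $\Th(M)$. So it can only be obtained after bi-definability identifies $\Th(M)$ with $\Th(M^G)$, where \autoref{thm:GroupActionStructure} applies.

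The missing step is \autoref{thm:LocallyA0CategoricalReduct}. Its key idea is that the metric $d$ is a symbol of the reduct, a hypothesis you never use. The argument runs as follows.
\begin{enumerate}
\item Let $T = \Th(M^G)$, $T' = \Th(M)$, and let $\pi_n\colon \tS_n(T)\to\tS_n(T')$ be the restriction map.
\item For $R>0$, let $B_R$ be the set of types with $d(x_i,x_j) < R$ for all $i,j$. It lies in $\fI_n(T) = M^n \dslash G$ and is bounded there, so its metric closure is compact by properness.
\item Since $\pi_n$ is $1$-Lipschitz for the type metrics, $\pi_n(\overline{B}_R)$ is metrically compact. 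Hence the metric and logic topologies agree on it.
\item The set $\pi_n(B_R)$ is logic-open in $\tS_n(T')$, because it is cut out by the $T'$-condition $d(x_i,x_j)<R$.
\item Therefore every type in $\pi_n(B_R)$ is isolated in $T'$. As $R$ varies, these sets cover all types realised in $M$, so $M$ is atomic, hence prime.
\end{enumerate}
Only then does minimality, using $\Aut(M)=G$, give bi-definability with $M^G$.

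Two smaller remarks. First, \autoref{thm:LocallyA0CategoricalDefinable} applies verbatim to $M^G$ once \autoref{thm:GroupActionStructure} is known, so no separate Stone--Weierstrass ``analogue'' is needed. Second, the obstacle you single out, weak locality of $\Th(M^G)$, is already settled by \autoref{thm:GroupActionStructure} via quantifier elimination. The reduct step above is the genuinely delicate point of this direction.
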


While most of these results are parallel to (and indeed, generalise) ones about $\aleph_0$-categorical structures, the local setting is rather more delicate.
An important obstacle is the fact that, unlike $\aleph_0$-categoricity, local $\aleph_0$-categoricity is not preserved under reducts (see \autoref{exm:ZBiColour}).

Applied to the action of a Polish group on its left completion, the theorem yields the following characterisation.
\begin{cor*}
  A Polish group is locally Roelcke precompact if and only if it is the automorphism group of some locally $\aleph_0$-categorical structure.
\end{cor*}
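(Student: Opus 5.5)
The corollary follows from the main theorem, applied in both directions; the backward direction also uses \autoref{thm:LocallyRoelckePrecompactProper}.

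For the backward direction, let $G = \Aut(M)$ with $M$ separable and locally $\aleph_0$-categorical, with metric $d$. By \autoref{thm:LocalRyllNardzewski} the theory admits a localising metric, so we may pass to a bi-definable copy of $M$ in which $d$ is localising; this does not change $\Aut(M)$. Condition (i) of the main theorem then holds. Hence (ii)(b) holds, so $G$ is locally Roelcke precompact. The group $G$ is Polish because it is the automorphism group of a separable metric structure, with the pointwise convergence topology.

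For the forward direction, let $G$ be Polish and locally Roelcke precompact. Fix a compatible left-invariant metric $d_L$ on $G$, and let $X = \widehat G_L$ be the left completion. This is a complete separable metric space, and $G$ acts on it by isometries via left translation. The map $G \to \Iso(X)$ is a topological group embedding onto a closed subgroup: it is injective since the action on $G \subseteq X$ is free, it is a homeomorphism onto its image by the standard argument, and its image is closed by Polishness. Since $G$ is dense in $X$, the quotient $X \dslash G$ is a single point, hence compact.

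To land in the main theorem we also need the action to be coarsely proper. This is the main obstacle: for an arbitrary compatible left-invariant metric, the stabilisers-at-distance sets $\set{g : d_L(x,gx) < r}$ need not be coarsely bounded. Here we must choose $d_L$ well. By Rosendal's theory, a locally Roelcke precompact group is locally bounded, so $G$ has a compatible left-invariant metric that is coarsely proper, i.e.\ its balls are coarsely bounded. One can take the maximal metric from a coarsely bounded identity neighbourhood, suitably combined with a compatible metric so that it remains compatible; we cite \cite{Rosendal:CoarseGeometry} for this. For $x \in G$ the set $\set{g : d_L(x,gx)<r}$ is then a conjugate of a ball, hence coarsely bounded. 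For general $x \in X$, approximate $x$ by elements of $G$ and enlarge $r$ slightly.

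By \autoref{thm:LocallyRoelckePrecompactProper}, every $X^n \dslash G$ is proper. By \autoref{thm:GroupActionStructure}, the structure $X^G$ is locally $\aleph_0$-categorical with $\Aut(X^G) = G$ (see \autoref{prp:GroupActionStructureMinimal}), and with localising metric $d_L$. Thus $G$ is the automorphism group of a locally $\aleph_0$-categorical structure.
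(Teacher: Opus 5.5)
Your proposal is correct and is essentially the paper's argument. The forward direction is \autoref{cor:GroupActionStructureLFromGroup}: take a coarsely proper metric (\autoref{fct:CoarselyProperMetric}), get properness of $\widehat{G}_L^n \dslash G$ from \autoref{thm:LocallyRoelckePrecompactProper}, then apply \autoref{thm:GroupActionStructure}. The backward direction is \autoref{cor:LocallyA0CategoricalAutGroup}. Your extra checks, that $G$ embeds as a closed subgroup of $\Iso(\widehat{G}_L)$ with the correct topology, and that switching to a localising metric leaves $\Aut(M)$ unchanged as a topological group (any two definable metrics are uniformly equivalent by compactness), are details the paper leaves implicit.
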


In practice, to check that a particular structure is locally $\aleph_0$-categorical, one usually verifies condition \autoref{i:intro:bigth:cond:proper} of the theorem.
In particular, for pure metric spaces, we have the following (see \autoref{cor:pure-metric-spaces-loca0}).
\begin{cor*}
  Let $(A, d)$ be a complete, separable, unbounded metric space, and let $G = \Iso(A)$.
  Then $(A, d)$ is locally $\aleph_0$-categorical if and only if $A \dslash G$ is compact and $A^n \dslash G$ is proper for every $n \in \bN$.
\end{cor*}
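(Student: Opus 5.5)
The corollary should follow from the main theorem of the introduction, applied to the pure metric structure $(A,d)$. Here the only atomic formula is the metric $d$ itself, and $\Aut(A,d) = \Iso(A) = G$. So the plan is to check that, for a pure metric space, the side condition ``every atomic formula is properly $G$-invariant'' in condition \ref{i:intro:bigth:cond} holds automatically under the hypotheses. The corollary then reads off the equivalence between (i) and \ref{i:intro:bigth:cond:proper}.

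For the direction ``locally $\aleph_0$-categorical implies the orbit-space conditions'', I would first show that the metric $d$ is localising. Local $\aleph_0$-categoricity provides some localising metric $d'$, unique up to uniform and coarse equivalence. On the local model $A$, which is a single component, $d'$ is finite everywhere and $d$ is a definable generalised metric that is also finite on $A$. I would then use the weakly local structure to see that in other models $d$ is finite exactly within components. Concretely, in an elementary extension, for elements in distinct components the type is determined componentwise, and it is non-isolated when $d$ is finite. Finiteness of $d$ would force the pair into a single component by compactness: the set $\{d<\infty\}$ is a countable union of closed sets $\{d\le n\}$, each consisting of isolated-type pairs realised in $A$. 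I expect this identification to need some care; I would lean on the part of the local Ryll-Nardzewski theorem saying that $a\sim b$ iff $\tp(a,b)$ is isolated. Once $d$ is localising, the theorem gives compactness of $A\dslash G$ and properness of each $A^n\dslash G$.

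For the converse, assume $A\dslash G$ is compact and every $A^n\dslash G$ is proper. By the implication \ref{i:intro:bigth:cond:proper} $\Rightarrow$ (b) of the theorem, $G$ is locally Roelcke precompact and acts coarsely properly. It remains to check that $d$ is properly $G$-invariant, i.e.\ that $\lim_{g\to\infty} d(a,gb)$ exists for all $a,b$. I claim the limit is $+\infty$. If not, there would be $r$ and $g_k\to\infty$ with $d(a,g_kb)<r$. Then $d(b, g_k^{-1}a) < r$, and also $d(a,g_k a)\le d(a,g_kb)+d(g_kb,g_ka) < r + d(b,a)$. So all $g_k$ lie in $\{g : d(a,ga)<r+d(a,b)\}$, which is coarsely bounded by coarse properness. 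This contradicts $g_k\to\infty$.

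Higher-arity proper invariance is not needed, since the language has only the binary symbol $d$. Also, $d$ is unbounded by hypothesis, which matches the requirement in the theorem that the metric be a possibly unbounded localising metric. The step I expect to be the main obstacle is the forward direction's verification that $d$ itself, rather than merely some metric, is localising. If that does not come directly from the local Ryll-Nardzewski theorem, I would instead argue via \autoref{thm:DefinableMetrisation} that $d$ is coarsely equivalent to $d'$ on $A$, and then transfer finiteness to all models using definability of the relations $d\le n$.
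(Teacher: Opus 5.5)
Your converse direction is fine and matches the paper, which cites \autoref{cor:LocallyA0CategoricalSufficient}; there, proper $G$-invariance of $d$ is observed to be automatic, and your coarse-properness argument also works. The gap is in the forward direction, at exactly the step you flagged: showing that $d$ itself is localising. Your assertion that each set $\{d \le n\}$ of $2$-types consists of isolated types is precisely what has to be proved, and neither proposed tool delivers it. The local Ryll-Nardzewski theorem identifies $\sim$ with joint isolation but says nothing about $d$. The uniqueness clause of \autoref{thm:DefinableMetrisation} only compares two metrics that are \emph{both} already known to satisfy $d(a,b)<\infty \iff a \sim b$, so invoking it to prove that $d$ has this property is circular. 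A symptom of the gap is that your argument never uses unboundedness of $d$: it would apply verbatim to $\min(d,1)$. That is a definable metric, finite on $A$, but it is not localising. Indeed, since $d$ is unbounded, $\Th(A)$ is not $\aleph_0$-categorical, so non-isolated $2$-types exist, and on them $\min(d,1) \le 1$.

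The missing idea is the argument of \autoref{prp:LocallyA0CategoricalLocalisingUnbounded}, which the paper combines with \autoref{cor:LocallyA0CategoricalAutGroup}. First, since $d$ is unbounded on $A$, compactness gives $p_0 \in \tS_2(T)$ with $d(x,y)^{p_0} = \infty$. Now let $p(x,y)$ be any non-isolated type with restrictions $q(x)$ and $r(y)$. Because $\tS_1(T) \cong A \dslash G$ is compact, it has finite diameter, so there is $p_1$ extending $q$ and $r$ at finite type distance from $p_0$. Hence $d(x,y)^{p_1} = \infty$, so $p_1$ is not realised in $A$ and is non-isolated. Local isolation allows at most one non-isolated $2$-type over given $1$-types, so $p = p_1$ and $d(x,y)^p = \infty$. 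The converse, that isolated types have finite $d$, is trivial because they are realised in $A$. Once $d$ is localising, \autoref{cor:LocallyA0CategoricalAutGroup} yields compactness of $A \dslash G$ and properness of every $A^n \dslash G$. This step cannot be bypassed: properness of $A^n \dslash G$ for $d$ says exactly that no $d$-bounded sequence of isolated types accumulates at a non-isolated type, which is the same content.
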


An important class of locally $\aleph_0$-categorical metric spaces is given by affine Banach spaces.
By the Mazur--Ulam theorem, every isometry of a Banach space is automatically affine, so it is natural to consider affine Banach spaces in the pure metric language.
For a Banach space $E$, let $E_1$ denote the closed unit ball equipped with the trace of the linear structure (this is the usual way to consider Banach spaces in continuous logic).
Many (unit balls of) Banach spaces are known to be $\aleph_0$-categorical (e.g., Hilbert spaces, $L^p$ spaces for $1 \leq p < \infty$, the Gurarij space).
The following (see \autoref{thm:BanachSpaceExample}) establishes, in particular, that the corresponding affine spaces are locally $\aleph_0$-categorical.

\begin{thm*}
  Let $E$ be a separable Banach space. Then $E$ (in the pure metric language) is locally $\aleph_0$-categorical if and only if the unit ball $E_1$ (with its linear structure) is $\aleph_0$-categorical.
\end{thm*}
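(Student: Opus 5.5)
We plan to derive the equivalence from the pure-metric-space criterion stated above: $(A,d)$ is locally $\aleph_0$-categorical if and only if $A \dslash G$ is compact and $A^n \dslash G$ is proper for all $n$, where $G = \Iso(A)$. Take $A = E$ with its metric. By the Mazur--Ulam theorem every isometry of $E$ is affine, so $G = \Iso(E) = E \rtimes \Iso_{\mathrm{lin}}(E)$, where translations act transitively and $H = \Iso_{\mathrm{lin}}(E)$ is the linear isometry group. In particular $E \dslash G$ is a single point, hence compact. It is also standard that $H = \Aut(E_1)$, since a linear isometry restricts to an automorphism of the unit ball and conversely extends by homogeneity. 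It therefore suffices to show that $E^n \dslash G$ is proper for every $n$ if and only if $E_1$ is $\aleph_0$-categorical. By the Fact from the introduction, the latter is equivalent to $E_1^n \dslash H$ being compact for all $n$; here we use that $E_1 \dslash H$ is compact, because orbits of $H$ in the ball are determined by the norm together with closure.

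The next step is to identify the quotients. Using translations, an orbit closure of $(a_0,\dots,a_n) \in E^{n+1}$ under $G$ corresponds bijectively to the $H$-orbit closure of $(a_1-a_0,\dots,a_n-a_0) \in E^n$. I expect this map $E^{n+1}\dslash G \to E^n \dslash H$ to be bi-Lipschitz for the max metrics. One direction is immediate: $d$ of the difference tuples is at most $2\,d$ of the original tuples. In the other direction, normalise $a_0=b_0=0$ by translating, and then the distances coincide. It thus remains to show that $E^n \dslash H$ is proper for all $n$ if and only if $E_1^n \dslash H$ is compact for all $n$.

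Suppose $E^n \dslash H$ is proper. The set $E_1^n \dslash H$ sits inside the closed ball of radius $1$ around the class of $0$. It is closed, because $E_1^n$ is closed and $H$-invariant and orbit closures of limits stay inside it. Hence it is compact. Conversely, suppose $E_1^n\dslash H$ is compact. Scaling by $r>0$ commutes with $H$ and multiplies distances by $r$, so $(rE_1)^n \dslash H$ is compact for every $r$. The closed ball of radius $R$ around the class of $0$ in $E^n\dslash H$ is exactly $(RE_1)^n\dslash H$, because the norm is $H$-invariant. Any closed ball is contained in one centred at $0$, so every closed ball is compact and the space is proper.

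The delicate points are the following. First, $E_1$ must be a structure whose automorphism group is $H$ and to which the Fact applies in the form of compactness of $E_1^n\dslash\Aut$. Second, the quotient metric identification must be handled with orbit closures rather than orbits. Third, and the one I expect to be the main obstacle, is verifying that the earlier corollary on pure metric spaces requires $E$ to be unbounded, which holds for $E\neq 0$; the trivial case $E=0$ is checked directly.
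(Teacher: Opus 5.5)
Your argument is correct once you delete one false justification. You claim that $E_1 \dslash H$ is compact because $H$-orbit closures in the ball are determined by the norm. That is not true in general. For $E = c_0$, every linear isometry permutes the coordinates up to unimodular scalars. So the norm-one vectors $e_0 + \dots + e_{k-1}$ ($k \geq 1$) have orbit closures at mutual distance at least $1$, and $E_1 \dslash H$ is not compact.

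You never need this claim. Condition (a) of the Fact, taken at $n=1$, already contains compactness of $E_1 \dslash H$. Hence $E_1$ is $\aleph_0$-categorical if and only if $E_1^n \dslash H$ is compact for every $n \geq 1$. In your chain, the case $n=1$ comes out of properness of $E^2 \dslash G \cong E \dslash H$.

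There is also a smaller inaccuracy. After translating to $a_0 = b_0 = 0$, the two distances need not coincide, because elements of $G$ may move $0$. For example, with $\|e\| = 1$, the classes of $(0,2e)$ and $(0,0)$ are at distance $1$ in $E^2 \dslash G$, while those of $2e$ and $0$ are at distance $2$ in $E \dslash H$. What holds, since $H \leq G$, is $d_G \leq d_H \leq 2 d_G$, and that is exactly the bi-Lipschitz bound you use.

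With this fixed, your route is genuinely different from the paper's. The paper passes through the group:
\begin{enumerate}
\item (i)$\Leftrightarrow$(ii) uses \autoref{cor:pure-metric-spaces-loca0} and \autoref{thm:LocallyRoelckePrecompactProper}. These turn properness of all $E^n \dslash \Iso(E)$ into local Roelcke precompactness plus coarse properness.
\item (ii)$\Leftrightarrow$(iii) is \autoref{prp:lRpcStabiliser}. Its proof uses the map $\iota_x$ of \autoref{lem:StabiliserOrbitEmbedding} together with Effros's theorem, and it moves all higher-arity information into Roelcke precompactness of $\Iso_0(E)$. As a result, the scaling observation is needed only for $n=1$.
\item (iii)$\Leftrightarrow$(iv) comes from the Fact in its form (b).
\end{enumerate}

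You never leave the orbit-closure spaces. Mazur--Ulam gives an explicit bi-Lipschitz bijection $E^{n+1} \dslash G \cong E^n \dslash H$. This is the map $\iota_0$, but you get a Lipschitz inverse from translations rather than from Effros. Scaling then handles every $n$ at once, and you invoke the Fact in form (a).

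Your proof is shorter and more elementary: it needs no Roelcke precompactness and no coarse geometry. The paper's route, in exchange, also yields the group-theoretic characterisations (ii) and (iii). It relies on general results about transitive isometric actions rather than on the linear structure of $E$.
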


A structure is \emph{absolutely categorical} in continuous logic (i.e., its theory has a unique model up to isomorphism) if and only if it is compact if and only if it is $\aleph_0$-categorical and its automorphism group is compact.
The analogous result in the local setting is the following (see \autoref{thm:LocallyCompact}).
\begin{thm*}
  Let $T$ be a locally $\aleph_0$-categorical theory, let $M$ be its prime model equipped with a localising metric $d$, and let $G = \Aut(M)$.
  Then the following are equivalent:
  \begin{enumerate}
  \item
    The metric space $(M, d)$ is proper.
  \item
    The group $G$ is locally compact.
  \item The structure $M$ is the unique local model of $T$ up to isomorphism.
  \end{enumerate}
  When $T$ is not $\aleph_0$-categorical, these are further equivalent to
  \begin{enumerate}[resume]
  \item The theory $T$ is $\kappa$-categorical for some (every) uncountable cardinal $\kappa$.
  \end{enumerate}
\end{thm*}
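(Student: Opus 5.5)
We prove the cycle (i)$\Rightarrow$(ii)$\Rightarrow$(iii)$\Rightarrow$(i), and then treat (iv) separately when $T$ is not $\aleph_0$-categorical. The tools are the local Ryll-Nardzewski theorem, the characterisation of local $\aleph_0$-categoricity through properness of $M^n \dslash G$, and the standard continuous-logic fact that a structure is compact if and only if it is the unique model of its theory.

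\textbf{(i)$\Rightarrow$(ii).} Assume $(M,d)$ is proper and fix $a \in M$. The main theorem says $G$ is locally Roelcke precompact and $G \actson M$ is coarsely proper. For a compact ball $B = \set{x : d(x,a) \le r}$, consider the set $V = \set{g : d(ga,a) < \varepsilon}$.
\begin{itemize}
\item We use the description of the topology of $G$ as pointwise convergence on a countable dense set.
\item We also use that $M \dslash G$ is compact, so finitely many orbits are $\varepsilon$-dense.
\end{itemize}
These together should show that $\cl{V}$ is compact, via an Arzelà--Ascoli argument. Indeed, each $g \in V$ maps $B$ into the compact ball of radius $r+\varepsilon$ around $a$, and isometries are equicontinuous. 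Properness of $(M,d)$ gives compactness of every ball. An isometry is determined by where it sends a dense countable subset of the single local component $M$, so $V$ embeds as a closed subset of a product of compact balls. Hence $G$ is locally compact.

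\textbf{(ii)$\Rightarrow$(iii).} Let $N$ be a local model of $T$, possibly non-separable. Take a separable elementary substructure $N_0 \preceq N$. By downward Löwenheim--Skolem, combined with closure under $\sim$-classes in weakly local theories, $N_0$ is a local model, hence $N_0 \cong M$. If $N \ne N_0$, pick $b \in N \setminus N_0$ and $a \in N_0$. By the local Ryll-Nardzewski theorem, $\tp(a,b)$ is isolated because $a \sim b$. Thus it is realised in $M$ as $\tp(a,b')$, and $d(a,b')=d(a,b)=:r$. Since $b \notin N_0$, we can iterate this to get, inside the ball of radius $r$ around $a$, a sequence of elements at mutual distance bounded below, realised in $M$. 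This contradicts local compactness, since balls are compact whenever (i) holds. So the realisation argument should be phrased through (i). We therefore expect to prove (ii)$\Rightarrow$(i) first: by local compactness and coarse properness, the stabiliser-type sets $\set{g : d(ga,a)\le r}$ are compact. Hence the ball around $a$ is covered by the images of finitely many orbit pieces $\cl{Ga_i}$ intersected with balls, and each of these is a continuous image of a compact set. This gives properness.

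\textbf{(iii)$\Rightarrow$(i).} Suppose $(M,d)$ is not proper. Then some ball contains an infinite $\varepsilon$-separated sequence. By compactness of $T$ we can realise, in an elementary extension, a local component containing uncountably many $\varepsilon$-separated elements within bounded distance. The metric is definable and finiteness of $d$ is a type-definable condition on bounded radii, so this component is a non-separable local model, contradicting uniqueness.

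\textbf{(iv).} Since $T$ is not $\aleph_0$-categorical, there are at least two components in some model. Models of density $\kappa$ are disjoint unions of local components.
\begin{itemize}
\item Under (iii), each component is $\cong M$ and hence separable. A model of density $\kappa > \aleph_0$ is therefore $\kappa$ copies of $M$, unique up to isomorphism.
\item Conversely, if (iii) fails, there is a non-separable local model by the argument of (iii)$\Rightarrow$(i) at any size $\kappa$. The model consisting of one big component has different structure from $\kappa$ copies of $M$, so these are two non-isomorphic models of density $\kappa$.
\end{itemize}

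The main obstacle is the upward Löwenheim--Skolem step inside a single component: producing local models of every uncountable density $\kappa$ with bounded $d$. We plan to realise a $\kappa$-indexed family with $d(x_\alpha,x_0)\le R$ and $d(x_\alpha,x_\beta)\ge\varepsilon$ by compactness, and then take the $\sim$-class of $x_0$, which is an elementary substructure by weak locality.
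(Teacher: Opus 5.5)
Your argument is correct in substance, but for the step from local compactness to uniqueness of the local model you take a different route from the paper. The paper argues directly. By homogeneity of $M$, the left completion $\widehat{G}_L$ is the semigroup of elementary self-embeddings of $M$. A locally compact group is complete in its left uniformity, so $\widehat{G}_L = G$. Hence $M$ has no proper separable local elementary extension, and downward L\"owenheim--Skolem removes the separability restriction.

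You instead pass through (i). Coarse properness of $G \actson M$ (\autoref{cor:LocallyA0CategoricalAutGroup}) makes the sets $\set{g : d(ga,a) \leq r}$ compact. This uses the fact that coarsely bounded subsets of a locally compact Polish group are relatively compact, which follows from \autoref{fct:GroupBoundedSetRoelckePrecompact} since $UFU$ is compact when $U$ is a compact identity neighbourhood. Compactness of $M \dslash G$ then gives properness, and you derive (iii) from (i) by a separation argument inside a separable elementary substructure.

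So what you actually prove is (i)$\Leftrightarrow$(ii), (i)$\Rightarrow$(iii) and (iii)$\Rightarrow$(i), not the announced cycle; that is logically fine. Your route gives a direct geometric equivalence between properness of $(M,d)$ and local compactness of $G$, and an elementary proof of (i)$\Rightarrow$(iii). The paper's route is shorter and yields, as a by-product, the intermediate condition that every elementary self-embedding of $M$ is an automorphism. Your (iii)$\Rightarrow$(i) and your treatment of (iv) are essentially the paper's: the compactness argument producing an uncountable separated family in one local component, and counting local components.

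Some steps need more care.
\begin{itemize}
\item In (i)$\Rightarrow$(ii), the image of $V$ in the product of compact balls is not closed, since a pointwise limit of isometries can be a non-surjective isometric embedding. Use $V = V^{-1}$ and pass to a subnet along which both $g$ and $g^{-1}$ converge, or simply quote the standard fact as the paper does.
\item In (ii)$\Rightarrow$(i), finitely many orbit closures do not cover a ball. What you get is that the ball lies within $\varepsilon$ of finitely many compact sets $\set{g a_i : d(g a_i,a) \leq r+\varepsilon}$, so it is totally bounded and hence compact.
\item In (i)$\Rightarrow$(iii), isolation of $\tp(a,b)$ is not what drives the iteration. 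The point is that $\delta = d(b,N_0) > 0$ because $N_0$ is complete. Then $N_0 \preceq N$ lets you choose successively $b_k \in N_0$ with $d(a,b_k) < r+1$ and $d(b_k,b_j) > \delta/2$ for $j < k$.
\item For failure of $\kappa$-categoricity you do not need a model consisting of $\kappa$ copies of $M$. Any non-local model of density character $\kappa$ will do; one exists by L\"owenheim--Skolem, as $T$ is not $\aleph_0$-categorical. It cannot be isomorphic to the local model of density character $\kappa$ you get after cutting down with downward L\"owenheim--Skolem.
\end{itemize}
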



\section{Locally Roelcke precompact Polish groups}
\label{sec:locally-roelcke-prec}

Let $G$ be a Polish group.
By the Birkhoff--Kakutani theorem, it admits a compatible left-invariant metric, that we usually denote by $d_L$.
Any two such metrics are uniformly equivalent, giving rise to a unique (as a uniform space) left completion $\widehat{G}_L$.
For the following fact, see \cite{Roelcke-Dierlof:UniformStructures} and \cite{BenYaacov-Tsankov:WAP}.

\begin{fct}
  \label{fct:GroupActionCompletion}
  Let $G$ be a Polish group, $d_L$ a compatible, left-invariant metric on $G$, $(X, d)$ a metric space, and $G \curvearrowright X$ a continuous, isometric action.
Then:
  \begin{itemize}
  \item The action map $G \times X \rightarrow X$ extends uniquely to a continuous map $\widehat{G}_L \times \widehat{X} \rightarrow \widehat{X}$, denoted $(a,x) \mapsto ax$, where $\widehat{X}$ is the completion of $X$.
    For $a \in \widehat{G}_L$, the map $x \mapsto ax$ is isometric.
    For $x \in \widehat{X}$, the map $a \mapsto ax$ is uniformly continuous (with a continuity modulus dependent on $x$).
  \item In the special case where $X = (G,d_L)$, this defines a semigroup law $\widehat{G}_L \times \widehat{G}_L \rightarrow \widehat{G}_L$.
  \item In the general case, $\widehat{G}_L \times \widehat{X} \rightarrow \widehat{X}$ defines a semigroup action, by which we mean that $a(bx) = (ab)x$ for $a,b \in \widehat{G}_L$ and $x \in \widehat{X}$.
  \end{itemize}
\end{fct}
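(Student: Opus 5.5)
The plan is to build every extension by uniform continuity and to derive the algebraic identities by density. The key observation is that on $G$ the left uniformity (given by $d_L$) is the one for which the orbit map $g \mapsto gx$ is uniformly continuous, and on $X$ each $g$ is an isometry. So I first fix $x \in X$ and show that $g \mapsto gx$ is $d_L$-uniformly continuous from $G$ to $X$. Given $\varepsilon>0$, continuity of the action at $(1,x)$ yields a neighbourhood $V$ of $1$ with $d(vx,x)<\varepsilon$ for $v\in V$. If $d_L(g,h)<\delta$, with $\delta$ chosen so that the $\delta$-ball around $1$ lies in $V$, then left invariance gives $h^{-1}g \in V$. Hence
\[
d(gx,hx)=d(h^{-1}gx,x)<\varepsilon .
\]
Consequently the orbit map extends uniquely to a uniformly continuous map $\widehat G_L \to \widehat X$, with the same modulus.

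Next I extend in the $x$ variable. For $a\in\widehat G_L$ and $x,y\in X$, write $a=\lim g_n$ with $g_n \in G$. Then $d(ax,ay)=\lim d(g_nx,g_ny)=d(x,y)$, so $x\mapsto ax$ is isometric on $X$ and extends to an isometry of $\widehat X$. To control the modulus of $a\mapsto ax$ for $x\in\widehat X$, I approximate $x$ by $x'\in X$ and use the triangle inequality:
\[
d(ax,bx)\le 2d(x,x')+d(ax',bx').
\]
This gives uniform continuity in $a$ with a modulus depending on $x$. Joint continuity follows from the same inequality with a variable second argument, using the isometry property to absorb $d(ax,ay)=d(x,y)$. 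Uniqueness is immediate, since $G\times X$ is dense and the extension is continuous.

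For the second bullet I take $X=(G,d_L)$, on which $G$ acts by left translation; this action is isometric because $d_L$ is left-invariant. Then $\widehat X=\widehat G_L$, and the first bullet gives a continuous map $\widehat G_L\times\widehat G_L\to\widehat G_L$ that extends multiplication. For the third bullet, the identity $a(bx)=(ab)x$ holds for $a,b\in G$ and $x\in X$. I then pass to limits one variable at a time. First $x\to\widehat X$: both sides are continuous in $x$, since each is a composition of isometries. Next $b\to\widehat G_L$: the left side is continuous in $b$ because $a$ acts isometrically and $b\mapsto bx$ is continuous; the right side is continuous in $b$ because $b\mapsto ab$ is continuous (it is left multiplication in the second bullet) and $c\mapsto cx$ is continuous. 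Finally $a\to\widehat G_L$: both sides are continuous in $a$, using continuity of $a\mapsto ab$, which is the orbit map of $b$ in the case $X=G$.

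The main obstacle is the order of these limits and making sure each map is continuous in the variable being extended. In particular, right multiplication $a\mapsto ab$ must be shown continuous for fixed $b\in\widehat G_L$, and this is exactly the first bullet applied to the point $b$ in the completion of $(G,d_L)$. Associativity of the semigroup law is then the special case of the third bullet with $X=G$.
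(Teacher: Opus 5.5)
Your proof is correct. The key steps are sound: left-uniform continuity of each orbit map $g \mapsto gx$ (via left invariance of $d_L$), the isometric extension in the second variable, joint continuity from $d(ax,by) \le d(ax,bx) + d(x,y)$, and the one-variable-at-a-time density argument for $a(bx) = (ab)x$, with continuity of $b \mapsto ab$ and $a \mapsto ab$ supplied by the first bullet applied to $X = G$. Polishness of $G$ is never used. The paper gives no proof of this fact and only cites Roelcke--Dierolf and Ben Yaacov--Tsankov; your argument is the standard one behind those references.
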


\begin{dfn}
  \label{dfn:OrbitClosureQuotient}
  Let $G$ be a group, let $(X, d)$ be a metric space, and let $G \curvearrowright X$ by isometries.
  For $x \in X$, we shall denote by $[x]$ the orbit closure $\overline{G x} \subseteq X$.
  The collection of all orbit closures will be denoted by $X \dslash G$.

  For $x,y \in X$, let $d\bigl(x, [y]\bigr)$ and $d\bigl( [x], [y]\bigr)$ denote the distances from a point to a set and between sets in $X$, respectively.
\end{dfn}

Let $G \curvearrowright X$ be an action as in \autoref{dfn:OrbitClosureQuotient}.
Since it is isometric, the relation $x \in \cl{Gy}$ is an equivalence relation and we have
\begin{gather*}
  d\bigl(x, [y] \bigr) = d\bigl( x, Gy \bigr) = d\bigl( [x], [y] \bigr)
\end{gather*}
for all $x,y \in X$.
It follows that the distance between sets defines a metric on $X \dslash G$, and if $X$ is complete, then so is $X \dslash G$.

\begin{lem}
  \label{lem:CoCompact}
  Let $G$ be a group acting by isometries on a complete metric space $X$.
  Then $X \dslash G$ is compact if and only if there exists a compact subset $K \subseteq X$ that meets every orbit closure in $X \dslash G$.
\end{lem}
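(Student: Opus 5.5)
The plan is to use the quotient map $\pi \colon X \to X \dslash G$, $x \mapsto [x]$, and to prove each implication separately. From the identity $d\bigl([x],[y]\bigr) \le d(x,y)$, the map $\pi$ is $1$-Lipschitz, hence continuous.

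For the direction ($\Leftarrow$), suppose $K \subseteq X$ is compact and meets every orbit closure. Then $\pi(K) = X \dslash G$, and the continuous image of a compact set is compact. Nothing further is needed.

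For the direction ($\Rightarrow$), suppose $X \dslash G$ is compact. We build $K$ as a closed, totally bounded set and use completeness of $X$ to conclude that it is compact.

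1. For each $n$, compactness of $X \dslash G$ gives a finite $2^{-n}$-net of orbit closures. Choosing a representative in each, we get a finite set $F_n \subseteq X$ such that every $x \in X$ satisfies $d(x, GF_n) < 2^{-n}$, using $d(x,[y]) = d(x,Gy)$.

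2. For each orbit closure $[x]$, we find a point of $[x]$ that lies close to a controlled sequence of points, by a Cauchy-sequence construction. Put $F = \bigcup_n F_n$. Starting from $x$, pick $g_0 \in G$ and $f_0 \in F_0$ with $d(g_0 x, f_0) < 1$. Then inductively choose $h_{n+1} \in G$ and $f_{n+1} \in F_{n+1}$ with
\begin{equation*}
  d(h_{n+1}f_{n+1}, g_n x) < 2^{-n-1},
\end{equation*}
and translate by $h_{n+1}^{-1}$. This yields points $x_n = h_{n+1}^{-1} g_n x$ in $Gx$.

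3. The step I expect to be the main obstacle is arranging the translations so that they converge rather than drift. Translating each time moves the whole picture, so I would instead fix the approximating points in a tree. Define $K$ as the closure of the set of points $y$ reachable by chains $y_0 \in F_0$, $y_{n+1} \in G F_{n+1}$ with $d(y_n, y_{n+1}) < 2^{-n}$. At each level, select only finitely many such $y_{n+1}$. This is possible because the set of orbit closures of $F_{n+1}$ is finite, and for each pair of a previously chosen $y_n$ and an $f \in F_{n+1}$ with $d(y_n, Gf) < 2^{-n}$, we choose one $g$ achieving the bound.

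4. This gives finite sets $L_n$, each point of $L_{n+1}$ within $2^{-n}$ of a point of $L_n$. Given $x$, choose $y_n \in L_n$ inductively so that $d(y_n, Gx) < 2^{-n}$ at each stage. Then choose $g_n$ with $d(g_n x, y_n) < 2^{-n}$ and adjust, using isometry, so that the $y_n$ form a Cauchy chain. Their limit $y$ lies in $[x]$.

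5. Finally, let $K$ be the closure of $\bigcup_n L_n$ together with all limits of such chains. Each point of $K$ is within $\sum_{k \ge n} 2^{-k}$ of $\bigcup_{m \le n} L_m$, which is finite. So $K$ is totally bounded and closed, hence compact since $X$ is complete. By step 4, $K$ meets every orbit closure.
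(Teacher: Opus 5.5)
Your skeleton is the same as the paper's. You take finite $2^{-n}$-nets of orbit closures and finite sets $L_n$ of representatives, with every point of $L_{n+1}$ within $2^{-n}$ of $L_n$. Then $K=\overline{\bigcup_n L_n}$ is totally bounded, hence compact by completeness. Step 5 is fine; step 2, which you abandon, can simply be deleted.

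The gap is in step 4, the only place where you show that $K$ meets every orbit closure. Suppose the $y_n$ are meant to be successive children in your tree. Then the inductive step is not justified. From $d(y_n,Gx)<2^{-n}$ and the best available $f\in F_{n+1}$, with $d([f],[x])<2^{-n-1}$, the triangle inequality only yields $d(y_n,Gf)<2^{-n}+2^{-n-1}$. This exceeds the threshold $2^{-n}$ you used in step 3 to create children. So $y_n$ need not have any child $y_{n+1}$ with $d(y_{n+1},Gx)<2^{-n-1}$; already for the trivial group acting on an interval, suitably placed nets make the induction stop. Suppose instead the $y_n$ are chosen independently. Then they need not form a chain, and ``adjust, using isometry'' cannot fix this: the $y_n$ are fixed points of the finite sets $L_n$, and replacing $x$ by $g_nx$ changes none of the quantities $d(y_n,Gx)$.

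Either of two small changes repairs the proof.

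\emph{Loosen the threshold.} Change the threshold in step 3 to $2^{1-n}$. Then a child in $[f]$ always exists, and the resulting chain is still Cauchy. Its limit $y$ satisfies $d(y,Gx)=0$, i.e.\ $y\in[x]$, and total boundedness is unaffected.

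\emph{Drop the chain, as the paper does.} Avoid following $x$ down the tree. By induction, $L_n$ meets $[f]$ for every $f\in F_n$: each $f\in F_{n+1}$ has $d(f,Gf')<2^{-n}$ for some $f'\in F_n$, and the representative of $[f']$ in $L_n$ then spawns a child in $[f]$. So you can pick any $y_n\in L_n$ with $d([y_n],[x])<2^{-n}$. Compactness of $K$ gives a limit point $y\in K$ of $(y_n)_n$, and continuity of $\pi$ gives $[y]=[x]$.

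The paper keeps a single preimage per net element and finishes with exactly this compactness argument, so no chain through $[x]$ is ever needed.
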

\begin{proof}
  Let $\pi \colon X \to X \dslash G$ denote the quotient map.
  If $K \subseteq X$ is a compact set meeting every orbit closure, then $\pi(K) = X \dslash G$ is compact.

  Conversely, assume that $X \dslash G$ is compact.
  For each $k \in \bN$, let $P_k$ be a finite $2^{-k}$-net in $X \dslash G$.
  We may choose by induction on $k$ subsets $A_k \subseteq X$ such that $|A_k| = |P_k|$, $\pi(A_k) = P_k$, and $d(x,A_k) < 2^{-k}$ for each $x \in A_{k+1}$.
  Indeed, if $[z] \in P_{k+1}$, then there exists $y \in A_k$ such that $d\bigl( [z], [y]\bigr) = d\bigl( [z], y \bigr) < 2^{-k}$, and any $x \in [z]$ such that $d(x,y) < 2^{-k}$ will do as a preimage for $[z]$.

  The union $\bigcup_k A_k$ is totally bounded, so $K = \overline{\bigcup_k A_k}$ is compact.
  If $p \in X \dslash G$, then there exist $p_k \in P_k$ such that $p_k \to p$.
  Let $x_k \in A_k$ be the corresponding preimages.
  If $y \in K$ is a limit point of $(x_k)_k$, then $[y] = p$, i.e., $y \in p$.
\end{proof}

We equip finite products of metric spaces with the maximum metric:
\begin{gather*}
  d(x,y) = \max_i \, d(x_i,y_i).
\end{gather*}
Thus, a finite product of isometric actions of $G$ gives rise to an isometric action.
In particular, we obtain natural continuous isometric actions $G \curvearrowright \widehat{G}_L^n$, and the quotient spaces $\widehat{G}_L^n \dslash G$ are well-defined up to uniform equivalence of the metric.

For $n = 1$, the quotient $\widehat{G}_L \dslash G$ is a singleton.
For $n = 2$, we have a canonical topological embedding $G \subseteq \widehat{G}_L^2 \dslash G$, identifying $g \in G$ with $[1,g] = [g^{-1},1] \in \widehat{G}_L^2 \dslash G$.
The uniform structure induced on $G$ from $\widehat{G}_L^2 \dslash G$ is called the \emph{Roelcke uniformity}.
Equivalently, the Roelcke uniformity is generated by a system of entourages $\bigl\{(g, v g u) : g \in G; \, v,u \in V\bigr\}$, where $V$ varies over neighbourhoods of the identity.
Consequently, $\widehat{G}_L^2 \dslash G$ is the \emph{Roelcke completion} of $G$, also denoted by $R(G)$.

For more details on these constructions, see, for example, \cite{BenYaacov-Tsankov:WAP}.

\begin{dfn}
  \label{dfn:RoelckePrecompact}
  Let $G$ be a topological group.
  \begin{itemize}
  \item A subset $A \subseteq G$ is \emph{Roelcke precompact} if it is totally bounded in the Roelcke uniformity.
    Equivalently, if the image of $A$ in the Roelcke completion $R(G)$ is precompact.
    Equivalently, if for every neighbourhood of the identity $U$, there exists a finite set $F \subseteq G$ such that $A \subseteq UFU$.
  \item The group $G$ is \emph{locally Roelcke precompact} if it admits a Roelcke precompact neighbourhood of the identity.
  \end{itemize}
\end{dfn}

We will also need some notions from coarse geometry of topological groups, as developed by Rosendal in \cite{Rosendal:CoarseGeometry}.

\begin{dfn}
  \label{dfn:GroupBoundedSet}
  Let $G$ be a topological group.
  A subset $B \subseteq G$ is \emph{coarsely bounded} if it is bounded for every continuous, left-invariant pseudo-metric on $G$.
  Equivalently, if for every sequence of symmetric open sets $U_n$ such that $G = \bigcup_n U_n$ and $U_n^2 \subseteq U_{n+1}$, we have $B \subseteq U_n$ for some $n$.
  The group $G$ is \emph{locally bounded} if it admits a coarsely bounded neighbourhood of the identity.
\end{dfn}

For locally Roelcke precompact groups, the coarsely bounded sets were identified by Zielinski~\cite{Zielinski:LocallyRoelckePrecompact}.

\begin{fct}
  \label{fct:GroupBoundedSetRoelckePrecompact}
  Let $G$ be a Polish, locally Roelcke precompact group.
  Then the coarsely bounded subsets of $G$ are precisely the Roelcke precompact subsets.
\end{fct}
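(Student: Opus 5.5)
This fact is attributed to Zielinski, and my plan is to reprove it directly from \autoref{dfn:RoelckePrecompact} and \autoref{dfn:GroupBoundedSet}, proving each inclusion separately.

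\emph{Coarsely bounded implies Roelcke precompact.}
Fix a Roelcke precompact neighbourhood $V$ of the identity. We may take $V$ open and symmetric, since shrinking $V$ and taking $V\cap V^{-1}$ preserves Roelcke precompactness. The plan has three steps.
\begin{enumerate}
\item Products of Roelcke precompact sets are Roelcke precompact. Let $A, B$ be Roelcke precompact and $U$ a neighbourhood of the identity. Choose a neighbourhood $W$ with $W^2 \subseteq U$. Write $A \subseteq UF_1W$ and $B \subseteq WF_2U$ with $F_1, F_2$ finite. Then $AB \subseteq UF_1W^2F_2U \subseteq UF_1UF_2U$. For each $f \in F_1$, the set $fUF_2 \cdot$ is handled by noting that $f U f^{-1}$ is a neighbourhood of the identity. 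This lets us rewrite $F_1 U F_2 \subseteq \bigcup_{f \in F_1} (fUf^{-1}) f F_2$. Choosing $U$ small enough that $fUf^{-1} \subseteq U'$ for the finitely many $f$, we obtain $AB \subseteq U'' F U''$, as required.
\item Use separability. $G$ is Lindelöf, so $G = \bigcup_k g_k V$ for some sequence $(g_k)$.
\item Define $U_n = (V \cup \{g_1^{\pm1},\dots,g_n^{\pm1}\}V)^{2^n}$, suitably symmetrised. These are open symmetric sets with $U_n^2 \subseteq U_{n+1}$ covering $G$. Each $U_n$ is a finite union of products of translates of $V$, hence Roelcke precompact by the first step and translation invariance. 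So a coarsely bounded set lies in some $U_n$ and is Roelcke precompact.
\end{enumerate}

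\emph{Roelcke precompact implies coarsely bounded.}
Let $A$ be Roelcke precompact, and let $(U_n)$ be an exhausting sequence as in \autoref{dfn:GroupBoundedSet}. Since $U_1$ is a neighbourhood of the identity (it is open, nonempty, symmetric and contains $1$ after passing to a later term), there is a finite $F$ with $A \subseteq U_1 F U_1$. Each element of $F$ lies in some $U_m$, so $F \subseteq U_m$ for some $m$. Then $A \subseteq U_1U_mU_1 \subseteq U_{m+2}$ for $m$ large. I note that local Roelcke precompactness is not needed for this direction.

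\emph{Main obstacle.}
The delicate step is the product lemma in the first direction: conjugating the inner neighbourhoods through the finite sets so that the product is again covered by $U F U$. I would do this carefully, choosing the neighbourhood inside $\bigcap_{f\in F_1} f^{-1}Uf$ before selecting $F_2$.
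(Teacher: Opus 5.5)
You have a genuine gap in Step~1 (the product lemma). The paper itself gives no proof of this Fact; it cites Zielinski. Your second direction (Roelcke precompact $\Rightarrow$ coarsely bounded) is correct and indeed needs no local Roelcke precompactness. Your plan for the first direction is also the right one: exhaust $G$ by finite products of translates of a Roelcke precompact neighbourhood $V$.

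The problem with Step~1 is that the argument is circular. In $A \subseteq UF_1W$ the finite set $F_1$ is chosen \emph{after} $W$. So the factor $F_1W$ involves a neighbourhood fixed before $F_1$ was known, and nothing makes $fWf^{-1}$ small for $f \in F_1$. Your remedy, choosing the neighbourhood for $B$ inside $\bigcap_{f\in F_1} f^{-1}Uf$, only controls the neighbourhood coming from $B$. The one already attached to $F_1$ remains, and shrinking it changes $F_1$.

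No rearrangement can fix this, because Step~1 as stated never uses $V$ and is false in general. Let $G = \bZ^{\bN} \rtimes \Sym(\bN)$ be the automorphism group of countably many disjoint copies of $(\bZ,s)$, indexed by $\bN$.
\begin{itemize}
\item The subgroup $H \cong \Sym(\bN)$ permuting the copies is Roelcke precompact.
\item Let $a_n$ translate the $n$th copy by $n$. Then $a_n \to 1$, so $K = \{1\} \cup \{a_n : n \in \bN\}$ is compact.
\item Let $b_n \in H$ swap copies $0$ and $n$. Then $b_na_nb_n$ translates copy $0$ by $n$, sending $(0,0)$ to $(0,n)$.
\end{itemize}
The stabiliser of $(0,0)$ acts trivially on copy $0$. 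Hence these elements lie in pairwise distinct double cosets of that open subgroup, and $HKH$ is not Roelcke precompact. (This $G$ is, necessarily, not locally Roelcke precompact.)

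The missing idea is to use $V$ inside the product lemma, absorbing the middle factor into $V$ rather than conjugating it to be small. Given a neighbourhood $U$, choose in this order:
\begin{itemize}
\item $U_0$ with $U_0^2 \subseteq U$;
\item $W \subseteq U_0$ with $W^2 \subseteq V$;
\item only then, finite $F_1, F_2$ with $A \subseteq WF_1W$ and $B \subseteq WF_2W$.
\end{itemize}
Then $AB \subseteq \bigcup_{f_1 \in F_1,\, f_2 \in F_2} W (f_1 V f_2) W$.

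Each two-sided translate $f_1Vf_2$ is Roelcke precompact. Indeed, cover $V$ using the neighbourhood $f_1^{-1}U_0f_1 \cap f_2U_0f_2^{-1}$. So $f_1Vf_2 \subseteq U_0 E U_0$ for some finite $E$. This $E$ depends on $f_1, f_2$, but $U_0$ was fixed at the outset, so there is no circularity. Hence $AB \subseteq U_0^2 E' U_0^2 \subseteq U E' U$ with $E'$ finite.

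With this version of Step~1, which requires the Roelcke precompact neighbourhood, your Steps~2 and~3 go through as written.
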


\begin{dfn}
  \label{dfn:CoarselyProper}
  Let $G$ be a topological group acting by isometries on a metric space $(X, d)$.
  We say that the action $G \curvearrowright X$ is \emph{coarsely proper} if for every $r > 0$ and every (equivalently, some) $x \in X$, the set $\bigl\{ g \in G : d(gx , x) < r \bigr\}$ is coarsely bounded.

  A compatible, left-invariant metric $d_L$ on $G$ is called \emph{coarsely proper} if the left translation action $G \curvearrowright (G, d_L)$ is coarsely proper, i.e., the $d_L$-bounded sets are precisely the coarsely bounded subsets of $G$.
\end{dfn}

The following is an immediate consequence of \cite[Theorem~2.28]{Rosendal:CoarseGeometry}.

\begin{fct}
  \label{fct:CoarselyProperMetric}
  Let $G$ be a Polish group.
  Then $G$ is locally bounded if and only if admits a coarsely proper, compatible, left-invariant metric.
  In particular, if $G$ is locally Roelcke precompact, then it admits such a metric.
\end{fct}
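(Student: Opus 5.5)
The statement has three parts: the forward implication (local boundedness gives a coarsely proper metric), its converse, and the final ``in particular''. I would check the converse and the ``in particular'' clause directly, and for the forward implication follow the construction behind \cite[Theorem~2.28]{Rosendal:CoarseGeometry}.

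\emph{Converse.} Suppose $d_L$ is a coarsely proper, compatible, left-invariant metric. Since $d_L$ is compatible, the open ball $B = \set{g : d_L(g,1) < 1}$ is a neighbourhood of the identity. It is $d_L$-bounded, so by \autoref{dfn:CoarselyProper} it is coarsely bounded. Hence $G$ is locally bounded.

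\emph{The ``in particular'' clause.} Suppose $G$ is locally Roelcke precompact, and let $U$ be a Roelcke precompact neighbourhood of the identity. By \autoref{fct:GroupBoundedSetRoelckePrecompact}, $U$ is coarsely bounded, so $G$ is locally bounded. The forward implication then yields the required metric.

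\emph{Forward implication, step 1: an exhaustion.} Let $V$ be a symmetric open coarsely bounded neighbourhood of $1$. I use the standard facts that finite unions, finite products and inverses of coarsely bounded sets are coarsely bounded (all immediate from the pseudo-metric definition), and that finite sets are coarsely bounded. Since $G$ is Polish, fix a dense sequence $(g_k)$; then $G = \bigcup_k g_k V$. Set $W_0 = V$ and $W_{n+1} = \bigl(W_n \cup g_n V \cup V g_n^{-1}\bigr)^2$. Each $W_n$ is symmetric, open and coarsely bounded, $W_n^2 \subseteq W_{n+1}$, and $\bigcup_n W_n = G$. By the second description in \autoref{dfn:GroupBoundedSet}, every coarsely bounded set lies in some $W_n$. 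So the coarsely bounded sets are exactly the subsets of the $W_n$.

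\emph{Forward implication, step 2: the metric.} Fix a compatible left-invariant metric $d_0$ with $d_0 \leq 1$. Define a weight $w$ by $w(h) = d_0(h,1)$ if $h \in V$, and $w(h) = n+1$ if $h \in W_{n+1} \setminus W_n$ (with $h \notin V$). Put
\begin{equation*}
  \ell(g) = \inf\Bigl\{ \sum_{i} w(h_i) : g = h_1 \cdots h_k \Bigr\},
\end{equation*}
and $d_L(g,h) = \ell(g^{-1}h)$. By construction $\ell$ is symmetric and subadditive, so $d_L$ is a left-invariant pseudo-metric.

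\emph{Forward implication, step 3: checking the properties.} Bounded sets: $d_L$ is bounded on each $W_n$, since such elements factor into boundedly many pieces of bounded weight. Conversely, take $g$ with $\ell(g) \leq r$ and a factorisation of weight $\leq r$. All factors outside $V$ lie in $W_{\lfloor r \rfloor}$ and there are at most $r$ of them. The factors inside $V$ have small $d_0$-length, and runs of them are controlled via $V$ and the exhaustion. So $g$ lies in a fixed $W_m$ with $m$ depending only on $r$, and $d_L$-bounded sets are coarsely bounded. Compatibility: $\ell \leq d_0(\cdot,1)$ on $V$ gives continuity. For the reverse, one needs that $\ell(g)$ small forces $g$ near $1$.

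The main obstacle is this last point: small $\ell$ must force $d_0(g,1)$ small. Products of many $V$-factors of small weight could in principle leave a neighbourhood of the identity while keeping $\ell$ small. To handle this I would replace $d_0$ on $V$ by a Birkhoff--Kakutani length built from a sequence $V = V_0 \supseteq V_1 \supseteq \cdots$ with $V_{j+1}^3 \subseteq V_j$, assigning weight $2^{-j}$ to elements of $V_j$. The standard Birkhoff--Kakutani estimate then gives that a product of total weight $< 2^{-j}$ lies in $V_{j-1}$. This yields compatibility of $d_L$; the remaining bookkeeping is exactly Rosendal's theorem.
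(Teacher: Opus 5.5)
Your proposal is correct and follows essentially the paper's route. The paper simply invokes Rosendal's Theorem~2.28, whose proof is the length-function construction you sketch, and your direct treatment of the converse and of the ``in particular'' clause is fine. One correction, though: the ``main obstacle'' you identify does not exist. Left-invariance of $d_0$ gives $d_0(h_1\cdots h_k,1) \leq \sum_i d_0(h_i,1) \leq \sum_i w(h_i)$, using $d_0 \leq 1 \leq w$ off $V$. Hence $\ell \geq d_0(\cdot,1)$, so $d_L \geq d_0$ and compatibility is immediate, with no Birkhoff--Kakutani modification needed. The same inequality is also what makes your vague claim that ``runs are controlled'' precise. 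Fix $\varepsilon$ with $B_{d_0}(1,\varepsilon) \subseteq V$ and group consecutive $V$-factors greedily into blocks of total weight $<\varepsilon$; each block's product then lies in $V$, and there are $O(r/\varepsilon)$ blocks. Together with the $O(r)$ factors outside $V$, this places $g$ in a bounded power of some $W_m$, with $m$ depending only on $r$.
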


We recall that a metric space is called \emph{proper} if all closed bounded sets are compact.

\begin{lem}
  \label{lem:lRpcActionProduct}
  Let $G$ be a Polish, locally Roelcke precompact group.
  Then the class of actions $G \curvearrowright X$ such that $X$ is a complete metric space, the action is continuous, isometric, and coarsely proper, and the quotient $X \dslash G$ is a proper metric space, is closed under finite products.
\end{lem}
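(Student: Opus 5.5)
The plan is to take two actions $G \curvearrowright X$ and $G \curvearrowright Y$ in the class and show that the diagonal action on $X \times Y$, with the maximum metric, is again in the class; finite products then follow by induction. Completeness of $X\times Y$ and continuity and isometry of the diagonal action are immediate. Coarse properness is also easy. For $(x,y) \in X \times Y$ and $r>0$ we have
$$\set{g : d(g(x,y),(x,y))<r} \subseteq \set{g : d(gx,x)<r},$$
and the right-hand side is coarsely bounded, so the left-hand side is too. The real work is showing that $(X\times Y)\dslash G$ is proper. Since this quotient is complete, it suffices to show the following: every sequence $[x_n,y_n]$ lying within distance $R$ of a fixed $[x_0,y_0]$ has a Cauchy subsequence.

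First I normalise the two coordinates separately. The points $[x_n]$ lie within $R$ of $[x_0]$ in the proper space $X\dslash G$, so after passing to a subsequence $[x_n] \to [x]$ for some $x \in X$. We can therefore choose $g_n \in G$ with $g_n x_n \to x$. In the same way, after passing to a further subsequence, there are $y \in Y$ and $h_n\in G$ with $h_n y_n \to y$.

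Next I show that the "relative position" elements $f_n = g_n h_n^{-1}$ lie in a coarsely bounded set. By hypothesis there are $k_n$ with $d(k_n x_n, x_0) \le R+1$ and $d(k_n y_n, y_0)\le R+1$. Then $g_n k_n^{-1}$ moves $x_0$ by a bounded amount, since it sends $k_n x_n$ (near $x_0$) to $g_n x_n$ (near $x$). Hence the $g_n k_n^{-1}$ lie in a coarsely bounded set, by coarse properness on $X$. Coarsely bounded sets are bounded for a coarsely proper metric (\autoref{fct:CoarselyProperMetric}), so by coarse properness on $Y$ these elements also move $y_0$ by a bounded amount. Consequently
$$d(g_n y_n, y_0) \le d(g_nk_n^{-1}\, k_n y_n, g_nk_n^{-1} y_0) + d(g_nk_n^{-1}y_0,y_0)$$
is bounded. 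It follows that $f_n$ moves $y$ by a bounded amount, because $f_n(h_n y_n) = g_n y_n$. By coarse properness on $Y$, the $f_n$ then lie in a coarsely bounded set, which is Roelcke precompact by \autoref{fct:GroupBoundedSetRoelckePrecompact}.

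Now fix $\varepsilon>0$ and let $U$ be a symmetric identity neighbourhood such that $d(ux,x)<\varepsilon$ and $d(uy,y)<\varepsilon$ for $u \in U$; such a $U$ exists by continuity of the actions. Roelcke precompactness gives a finite $F$ with $f_n \in UFU$, so we may write $f_n = u_n c v_n$, with $u_n, v_n \in U$ and, after passing to a subsequence, a single $c\in F$. Applying $u_n^{-1}$ to $(g_nx_n, g_ny_n)$ gives
$$\bigl(u_n^{-1}g_nx_n,\; c\, v_n h_n y_n\bigr).$$
For large $n$ this point is within about $2\varepsilon$ of $(x, cy)$: the first coordinate because $g_nx_n\to x$ and $u_n^{-1}$ moves $x$ by less than $\varepsilon$, the second because $h_ny_n\to y$ and $v_n$ moves $y$ by less than $\varepsilon$. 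Therefore the tail of the subsequence has pairwise quotient distances at most about $4\varepsilon$. Running this with $\varepsilon = 2^{-k}$ and a diagonal extraction produces a Cauchy subsequence of $[x_n,y_n]$, which completes the argument.

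I expect the main obstacle to be the bookkeeping in the bound on $f_n$. That step needs coarse boundedness to be transferred between the two actions, and this is exactly where local Roelcke precompactness (through \autoref{fct:GroupBoundedSetRoelckePrecompact} and \autoref{fct:CoarselyProperMetric}) is used.
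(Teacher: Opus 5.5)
Your proof is correct and is essentially the paper's argument in sequential form. The paper takes finite $\delta$-nets in the two factor quotients, notes that the relevant relative elements lie in the coarsely bounded (hence Roelcke precompact) set $E_1^{-1}E_2 \subseteq UFU$, and absorbs the two $U$'s exactly as you do with $f_n = u_n c v_n$, obtaining a finite $2\delta$-net of points $[x_i, f y_j]$.

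One small correction to your justification. The claim that the coarsely bounded elements $g_nk_n^{-1}$ move $y_0$ a bounded amount does not follow from coarse properness on $Y$, which gives the converse implication. It holds directly by the definition of coarse boundedness, since $(g,h) \mapsto d(gy_0,hy_0)$ is a continuous left-invariant pseudo-metric on $G$. Alternatively, as in the paper, replace $(x_n,y_n)$ by $k_n(x_n,y_n)$ at the start; then $g_n$ and $h_n$ are themselves coarsely bounded, and so is $g_nh_n^{-1}$.
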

\begin{proof}
  Let $G \curvearrowright X$ and $G \curvearrowright Y$ be as in the statement.
  Clearly, $X \times Y$ is a complete metric space, and the action $G \curvearrowright X \times Y$ is continuous, isometric, and coarsely proper.
  It remains to show that $(X \times Y) \dslash G$ is a proper metric space.
  In other words, we need to show that if $B \subseteq X \times Y$ is bounded, then its image in $(X \times Y) \dslash G$, which we denote by $B \dslash G$, is totally bounded.
  We may assume that $B = B_1 \times B_2$, where $B_1 \subseteq X$ and $B_2 \subseteq Y$ are bounded.

  Let $\delta > 0$ be given.
  By hypothesis, $B_1 \dslash G$ is totally bounded, so there exists a finite family $\set{x_i}_i \subseteq B_1$ such that for every $x \in B_1$ we have $d\bigl([x],[x_i]\bigr) < \delta$ for some $i$.
  In other words, $d(x,gx_i) < \delta$ for some $i$, and some $g \in G$.
  Notice that, in this case,
  \begin{gather*}
    g \in E_1 = \bigl\{ g \in G : d(B_1,gB_1) < \delta \bigr\}.
  \end{gather*}
  Since the action $G \curvearrowright X$ is coarsely proper and $B_1$ is bounded in $X$, the set $E_1$ is coarsely bounded.
  Similarly, there exists a finite family $\set{y_j}_j \subseteq B_2$ such that for every $y \in B_2$ we have $d(y,g y_j) < \delta$ for some $j$, and some $g$ in the coarsely bounded set
  \begin{gather*}
    E_2 = \bigl\{ g \in G : d(B_2,gB_2) < \delta \bigr\}.
  \end{gather*}

  Choose a neighbourhood of the identity $U \subseteq G$ such that $d(ux_i,x_i) < \delta$ and $d(uy_j,y_j) < \delta$ for all $u \in U$ and $i,j$.
  The set $E_1^{-1} E_2$ is coarsely bounded and therefore Roelcke precompact, so there exists a finite set $F$ such that $E_1^{-1} E_2 \subseteq U F U$.

  Let $(x,y) \in B = B_1 \times B_2$.
  By construction, there exist $i$, $j$, and $(g,h) \in E_1 \times E_2$ such that
  \begin{gather*}
    \max \bigl(d(x,g x_i), d(y,h y_j) \bigr) < \delta.
  \end{gather*}
  Then there exist $f \in F$ and $u,v \in U$ such that $g^{-1}h = ufv$.
  Now, letting $\sim$ mean ``at distance $<\delta$'', we have
  \begin{gather*}
    [x_i,f y_j] \sim [u^{-1}x_i,f v y_j] = [ g x_i, h y_j ] \sim [x,y].
  \end{gather*}
  It follows that the finite set of balls of radius $2\delta$ and centres at $[x_i,f y_j]$, where $i,j$, and $f$ vary, covers $B \dslash G$.
\end{proof}

\begin{thm}
  \label{thm:LocallyRoelckePrecompactProper}
  Let $X$ be a complete separable metric space and let $G \leq \Iso(X)$ be a closed subgroup.
  Assume that $X \dslash G$ is a proper metric space.
  Then the following are equivalent:
  \begin{enumerate}
  \item The group $G$ is locally Roelcke precompact and the action $G \curvearrowright X$ is coarsely proper.
  \item The quotient $X^n \dslash G$ is a proper metric space for every $n$.
  \item
    \label{item:LocallyRoelckePrecompactProperBoundedRpc}
    For every $x \in X$ and $r > 0$, the set $O = \bigl\{g \in G : d(x,gx) < r \bigr\}$ is Roelcke precompact.
  \end{enumerate}
\end{thm}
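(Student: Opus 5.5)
We prove (i)$\Rightarrow$(ii)$\Rightarrow$(iii)$\Rightarrow$(i).

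\emph{(i)$\Rightarrow$(ii).} The action $G \curvearrowright X$ is continuous (since $G \le \Iso(X)$ carries the pointwise convergence topology), isometric and coarsely proper, and $X$ is complete with $X \dslash G$ proper. Applying \autoref{lem:lRpcActionProduct} inductively to $X^n = X^{n-1} \times X$ shows that $X^n \dslash G$ is proper for every $n$.

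\emph{(ii)$\Rightarrow$(iii).} Fix $x \in X$ and $r > 0$. For $g \in O$, the class $[x, gx] \in X^2 \dslash G$ lies in the bounded set of classes $[x,y]$ with $d(x,y) < r$, which is precompact by (ii). Take an identity neighbourhood $U$. Since $G$ is a closed subgroup of $\Iso(X)$, a basic neighbourhood of $1$ has the form $U \supseteq \set{u : d(u z_k, z_k) < \varepsilon,\ k < m}$. We may add $x$ to the finite tuple $\bar z = (z_0, \ldots, z_{m-1})$. Now consider the classes $[\bar z, g \bar z]$ in $X^{2m} \dslash G$ for $g \in O$.

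These classes form a bounded set. Indeed, $d(z_k, g z_k) \le d(z_k, x) + d(x, gx) + d(gx, g z_k) < 2d(z_k,x) + r$. Hence by (ii) there are finitely many $f \in F \subseteq O$ such that every $g \in O$ satisfies $d([\bar z, g\bar z], [\bar z, f\bar z]) < \varepsilon$ for some $f$. That is, there is $h \in G$ with $d(h\bar z, \bar z) < \varepsilon$ and $d(hf\bar z, g\bar z) < \varepsilon$. Then $h \in U$, and $v = f^{-1}h^{-1}g$ satisfies $d(v\bar z, \bar z) < \varepsilon$, so $v \in U$. Thus $g = hfv \in UFU$, and $O$ is Roelcke precompact.

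\emph{(iii)$\Rightarrow$(i).} The set $O$ is an open neighbourhood of $1$, so by (iii) $G$ is locally Roelcke precompact. \autoref{fct:GroupBoundedSetRoelckePrecompact} then shows that Roelcke precompact sets are coarsely bounded, so each $O$ is coarsely bounded, which is exactly coarse properness.

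The main obstacle is the step (ii)$\Rightarrow$(iii). There one must pass from properness of the quotient to an explicit covering $UFU$, taking care that the auxiliary tuple $\bar z$ stays at bounded distance from its translate. The triangle-inequality estimate above handles this, and it is the only nontrivial point.
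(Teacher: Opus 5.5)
Your proof is correct and follows the paper's argument essentially step for step: (i)$\Rightarrow$(ii) via \autoref{lem:lRpcActionProduct}; (ii)$\Rightarrow$(iii) by taking a finite net in the bounded set of classes $[\bar z, g\bar z] \in X^{2m} \dslash G$ for $g \in O$ and reading off $O \subseteq UFU$; and (iii)$\Rightarrow$(i) because $O$ is an open Roelcke precompact neighbourhood of $1$. The only differences are minor: you make the boundedness estimate explicit, and adjoining $x$ to $\bar z$ is harmless but unnecessary. Also, for coarse boundedness you invoke \autoref{fct:GroupBoundedSetRoelckePrecompact}, which is valid since $G$ is Polish and you have already shown it is locally Roelcke precompact, whereas the paper uses the elementary fact that Roelcke precompact sets are coarsely bounded in any topological group.
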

\begin{proof}
  \begin{cycprf}
  \item
    By \autoref{lem:lRpcActionProduct}.
  \item
    Let $V \subseteq G$ be a neighbourhood of the identity.
    Possibly replacing $V$ with a smaller set, we may assume that there exist $y \in X^k$, $\varepsilon > 0$ such that
    \begin{gather*}
      V = \bigl\{ g \in G : d(y,g y) < \varepsilon \bigr\}.
    \end{gather*}
    Define $\theta\colon G \rightarrow X^{2k} \dslash G$ as $\theta(g) = [y,gy]$.
    The set $\theta(O) \subseteq X^{2k} \dslash G$ is bounded, and therefore totally bounded.
    In particular, there exists a finite set $\set{h_i : i < m} \subseteq O$ such that for every $g \in O$ there exists $i < m$ with
    \begin{gather*}
      d\bigl( \theta(g), \theta(h_i) \bigr) < \varepsilon.
    \end{gather*}
    By definition, this means that there exists $v \in G$ such that
    \begin{gather*}
      \max  \bigl(d( y, v y ), d( g y, v h_i y ) \bigr)  < \varepsilon.
    \end{gather*}
    Equivalently, $v \in V$ and $u = h_i^{-1} v^{-1} g \in V$.
    In particular, there exist $v, u \in V$ such that $g = v h_i u$.
    Thus $O \subseteq V \{h_i : i < m\} V$, as promised.
  \item[\impfirst]
    Since a Roelcke precompact set is always coarsely bounded.
  \end{cycprf}
\end{proof}

The equivalence between the first and the last item of the following corollary is due to Zielinski~\cite{Zielinski:LocallyRoelckePrecompact}.

\begin{cor}
  \label{cor:LocallyRoelckePrecompactAction}
  Let $G$ be a Polish group.
  Then the following are equivalent:
  \begin{enumerate}
  \item The group $G$ is locally Roelcke precompact.
  \item The group $G$ admits a compatible, coarsely proper, left-invariant metric $d_L$, and the induced metric on $\widehat{G}_L^n \dslash G$ is proper for every $n$.
  \item The Roelcke completion $R(G)$ is locally compact.
  \end{enumerate}
\end{cor}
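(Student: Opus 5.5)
We prove (i)$\Rightarrow$(ii)$\Rightarrow$(iii)$\Rightarrow$(i), applying \autoref{thm:LocallyRoelckePrecompactProper} to the left translation action $G \curvearrowright X = \widehat{G}_L$.

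For (i)$\Rightarrow$(ii), we take $d_L$ to be a compatible, coarsely proper, left-invariant metric, which exists by \autoref{fct:CoarselyProperMetric}. The action $G \curvearrowright (G,d_L)$ is continuous, isometric and coarsely proper. Since $G$ is dense in $\widehat{G}_L$ and the action extends by \autoref{fct:GroupActionCompletion}, the action $G \curvearrowright \widehat{G}_L$ is also coarsely proper: for $x \in G$ the set $\set{g : d_L(gx,x) < r}$ is unchanged. Moreover, $\widehat{G}_L$ is complete and separable, and $\widehat{G}_L \dslash G$ is a singleton, hence proper. The group $G$ is closed in $\Iso(\widehat{G}_L)$, because the topology of pointwise convergence on $G \subseteq \widehat{G}_L$ coincides with the topology of $G$ and $G$ is Polish. (Alternatively, one can avoid this point by applying \autoref{lem:lRpcActionProduct} directly, since it is stated for $G$ itself.) Iterating \autoref{lem:lRpcActionProduct} then shows that $\widehat{G}_L^n \dslash G$ is proper for every $n$.

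For (ii)$\Rightarrow$(iii), recall that $R(G) = \widehat{G}_L^2 \dslash G$. A proper metric space is locally compact, so $R(G)$ is locally compact.

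For (iii)$\Rightarrow$(i), we use the canonical embedding of $G$ into $R(G)$, identifying $g$ with $[1,g]$. This embedding is a topological embedding with dense image. Since $R(G)$ is locally compact, the point $[1,1]$ has a compact neighbourhood $K$ in $R(G)$. Then $V = G \cap K$ is a neighbourhood of the identity in $G$. The image of $V$ lies in the compact set $K$, so $V$ is totally bounded for the Roelcke uniformity, that is, Roelcke precompact in the sense of \autoref{dfn:RoelckePrecompact}. Hence $G$ is locally Roelcke precompact.

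The only step needing care is (i)$\Rightarrow$(ii), namely checking that the hypotheses of \autoref{lem:lRpcActionProduct} hold for $\widehat{G}_L$. This comes down to two observations: coarse properness transfers to the completion because $G$ is dense in it, and the quotient $\widehat{G}_L \dslash G$ is a point, so it is trivially proper.
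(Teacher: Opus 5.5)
Your proof is correct and matches the paper's argument: a coarsely proper metric from \autoref{fct:CoarselyProperMetric}, the observation that $\widehat{G}_L \dslash G$ is a point to get properness of $\widehat{G}_L^n \dslash G$, properness of $R(G) = \widehat{G}_L^2 \dslash G$, and a compact neighbourhood of $[1,1]$ intersected with $G$. The only minor difference is that the paper cites \autoref{thm:LocallyRoelckePrecompactProper}, whose relevant implication is proved by iterating \autoref{lem:lRpcActionProduct}; citing the lemma directly, as you do, makes the closedness of $G$ in $\Iso(\widehat{G}_L)$ unnecessary, although your justification of that point is also fine.
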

\begin{proof}
  \begin{cycprf}
  \item
    Let $d_L$ be a coarsely proper, compatible, left-invariant metric on $G$ (which exists by \autoref{fct:CoarselyProperMetric}), and let $\widehat{G}_L$ denote the completion.
    Then the action $G \curvearrowright \widehat{G}_L$ is coarsely proper, and $\widehat{G}_L \dslash G$ is a singleton.
    By \autoref{thm:LocallyRoelckePrecompactProper}, $\widehat{G}_L^n \dslash G$ is proper for every $n$.
  \item
    The Roelcke completion $R(G) = \widehat{G}_L^2 \dslash G$ is a proper metric space, and therefore locally compact.
  \item[\impfirst]
    Let $U \subseteq R(G)$ be any compact neighbourhood of $1 = [1_G,1_G]$.
    Then $U \cap G$ is a Roelcke precompact neighbourhood of $1$.
  \end{cycprf}
\end{proof}

It is obvious that if $G$ is a locally compact group and $G \actson X$ is a proper, isometric action, then the stabiliser of every point is compact.
We end this section with a suitable analogue of this fact in the Roelcke precompact setting.

\begin{lem}
  \label{lem:StabiliserOrbitEmbedding}
  Let $X$ be a complete metric space, let $G$ be a Polish group, and let $G \curvearrowright X$ continuously by isometries.
  For $x \in X$, let $G_x \leq G$ denote the stabiliser of $x$ for this action.
  For any fixed $n$, define a map $\iota_x\colon X^n \dslash G_x \rightarrow X^{n+1} \dslash G$ sending $[y] \mapsto [x,y]$.
  Then:
  \begin{enumerate}
  \item The map $\iota_x$ is well-defined.
    It is $1$-Lipschitz, and sends unbounded sets to unbounded sets.
  \item Suppose that $Gx$ is a $G_\delta$ set.
    Then $\iota_x$ is a homeomorphic embedding and $X^n \dslash G$ is a proper metric space if and only if the image of $\iota_x$ is a proper metric space.
  \item If $G \curvearrowright X$ is transitive, then $\iota_x$ is surjective.
  \end{enumerate}
\end{lem}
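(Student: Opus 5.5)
We treat the three items in order, working directly with sequences and the definition of the quotient metric. The only external ingredient is the Effros theorem, used in (ii). We read the properness assertion in (ii) as comparing $X^n \dslash G_x$ with the image of $\iota_x$, since that is what $\iota_x$ relates.

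For (i), suppose $[y]=[y']$ in $X^n \dslash G_x$, so that $y' = \lim h_k y$ with $h_k \in G_x$. Then $h_k(x,y) = (x,h_ky) \to (x,y')$, so $[x,y'] = [x,y]$ and $\iota_x$ is well-defined. For the Lipschitz bound, restrict the infimum defining $d\bigl([x,y],[x,y']\bigr)$ to $h \in G_x$. This gives
\begin{gather*}
  d\bigl([x,y],[x,y']\bigr) \le \inf_{h\in G_x} \max\bigl(d(x,hx), d(y,hy')\bigr) = d\bigl([y],[y']\bigr).
\end{gather*}
For unboundedness we compare with the base point $\bar x = (x,\dots,x) \in X^n$. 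Since $G_x$ fixes $\bar x$, we have $d\bigl([y],[\bar x]\bigr) = \max_i d(y_i,x)$. On the other side, for any $g \in G$,
\begin{gather*}
  d(y_i,x) \le d(y_i,gx) + d(gx,x),
\end{gather*}
so $d\bigl([x,y],[x,\bar x]\bigr) \ge \tfrac12 \max_i d(y_i,x)$. Hence an unbounded set, on which $\max_i d(y_i,x)$ is unbounded, has unbounded image.

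For (ii), the key step is a lifting argument. Since $Gx$ is $G_\delta$, it is Polish, so by Effros the orbit map $G \to Gx$ is open. Suppose $g_k \in G$ satisfy $g_k x \to x$. By openness, for each neighbourhood $U$ of $1$ we eventually have $g_k x \in Ux$, which yields $u_k \to 1$ with $u_k x = g_k x$, and then $u_k^{-1}g_k \in G_x$.

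Injectivity follows from this. If $[x,y]=[x,z]$, choose $g_k$ with $g_kx\to x$ and $g_kz \to y$. Then $u_k^{-1}g_k z \to y$, since $u_k \to 1$ and the action is continuous, so $[y]=[z]$ in $X^n\dslash G_x$.

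Continuity of the inverse is the same argument with moving points. If $[x,y_k] \to [x,y]$, pick $g_k$ with $d(g_kx,x)\to 0$ and $d(g_ky_k,y)\to 0$. Lifting as above gives $h_k = u_k^{-1} g_k \in G_x$ with $h_ky_k \to y$. For this we need $d(u_k^{-1}y,y)\to 0$, which holds since $u_k\to1$ and $y$ is fixed. Hence $[y_k]\to[y]$, and $\iota_x$ is a homeomorphic embedding.

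For the properness equivalence, (i) shows that $\iota_x$ maps bounded sets to bounded sets and unbounded sets to unbounded sets. Thus bounded sets in $X^n\dslash G_x$ correspond exactly, under $\iota_x$, to bounded subsets of the image, and the homeomorphism carries closed sets to relatively closed sets and compact sets to compact sets. Therefore closed bounded subsets are compact on one side if and only if they are on the other. The main obstacle is this Effros lifting step; the only subtlety is that $u_k^{-1}$ must be applied to the moving points $y_k$, which the triangle inequality together with isometry handles.

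For (iii), take any point $[x',y] \in X^{n+1}\dslash G$. By transitivity there is $g$ with $gx' = x$, so $[x',y] = [x,gy] = \iota_x\bigl([gy]\bigr)$.
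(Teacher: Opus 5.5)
Your proof is correct and follows essentially the same route as the paper. For (i) you use the same restriction of the infimum to $G_x$ and the same triangle-inequality bound against $\bar x$; for (ii) you use the same Effros openness of $g \mapsto gx$ on $Gx$ to correct an almost-stabilising $g$ into $G_x$ (you phrase it with sequences $u_k \to 1$, where the paper uses an explicit $\varepsilon$--$\delta$ with $V = \{g : d(y,gy) < \varepsilon\}$), and your reading of the properness clause as concerning $X^n \dslash G_x$ is the one the paper's proof and its later use in \autoref{prp:lRpcStabiliser} intend.
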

\begin{proof}
  Let $y,z \in X^n$, and let $\overline{x} = (x,x,\ldots,x) \in X^n$ and $g \in G$.
  Then, on the one hand,
  \begin{gather*}
    d(G_x y, G_x z) = d\bigl( G_x \cdot (x,y) , G_x \cdot (x,z) \bigr) \geq d\bigl( G \cdot (x,y) , G \cdot (x,z) \bigr),
  \end{gather*}
  so $\iota_x$ is indeed well-defined and $1$-Lipschitz.
  On the other hand,
  \begin{gather*}
    d( G_x y, G_x \overline{x})
    =
    d(y, \overline{x})
    \leq
    d(x,gx) +  d(y, g\overline{x})
    \leq
    2 d\bigl( (x,y) , g \cdot (x,\overline{x})\bigr),
  \end{gather*}
  so $\iota_x$ sends unbounded sets to unbounded sets.
  For the second assertion, let $y \in X^n$ and $\varepsilon > 0$.
  Let $V = \bigl\{ g \in G : d(y,gy) < \varepsilon \bigr\}$.
  By Effros's theorem \cite{Effros:TransformationGroups}, $Vx$ is open in $Gx$, so it contains $B(x,\delta) \cap Gx$ for some $\delta > 0$.
  Suppose now that $d\bigl( [x, y], [x, z] \bigr) < \delta$ for some $z \in X^n$.
  Then there exists $g \in G$ such that $d(x, g x) < \delta$ and $d(y, g z) < \delta$.
  Consequently, there exists $h \in V$ such that $hgx = x$.
  Then $hg \in G_x$, and
  \begin{gather*}
    d\bigl( y, hg z\bigr)
    \leq d\bigl( y, hy \bigr) + d(hy, hgz)
    < \varepsilon + \delta.
  \end{gather*}
  Therefore $d\bigl( [y], [z] \bigr) < \varepsilon + \delta$, which is enough to show that $\iota_x^{-1}$ is continuous on the image of $\iota_x$.
  By the first item, a set $K \subseteq X^n \dslash G_x$ is bounded if and only if its image is, so properness is preserved as well.

  The last assertion is immediate.
\end{proof}

\begin{prp}
  \label{prp:lRpcStabiliser}
  Let $X$ be a complete, separable metric space, and let $G \leq \Iso(X)$ be a closed subgroup that acts transitively on $X$.
  Then the following are equivalent:
  \begin{enumerate}
  \item The group $G$ is locally Roelcke precompact and the action $G \curvearrowright X$ is coarsely proper.
  \item For some (every) $x \in X$, the stabiliser $G_x$ is Roelcke precompact and $X \dslash G_x$ is a proper metric space.
  \end{enumerate}
\end{prp}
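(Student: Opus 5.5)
The plan is to reduce both directions to \autoref{thm:LocallyRoelckePrecompactProper} by means of \autoref{lem:StabiliserOrbitEmbedding}. Since $G$ acts transitively, $X \dslash G$ is a singleton, hence proper, so the theorem applies to $G \curvearrowright X$. Moreover, $Gx = X$ is trivially $G_\delta$, so for every $n$ the map $\iota_x \colon X^n \dslash G_x \to X^{n+1} \dslash G$ is a surjective homeomorphism that sends bounded sets to bounded sets in both directions. Both claims come from \autoref{lem:StabiliserOrbitEmbedding}: it is $1$-Lipschitz and sends unbounded sets to unbounded sets. Therefore $X^n \dslash G_x$ is proper if and only if $X^{n+1} \dslash G$ is proper. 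Transitivity also makes the choice of $x$ irrelevant, since the stabilisers $G_x$ and $G_{gx} = gG_xg^{-1}$ are conjugate and the quotients are isometric via $g$. This handles ``some'' versus ``every''.

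For (i)$\Rightarrow$(ii), \autoref{thm:LocallyRoelckePrecompactProper} gives that $X^n \dslash G$ is proper for all $n$. Taking $n=2$ and using $\iota_x$ with $n=1$, we get that $X \dslash G_x$ is proper. For Roelcke precompactness of $G_x$, item \ref{item:LocallyRoelckePrecompactProperBoundedRpc} of the theorem says that $O = \{g : d(x,gx) < r\}$ is Roelcke precompact in $G$. This set contains $G_x$, so $G_x$ is a Roelcke precompact subset of $G$. I will need to upgrade this to Roelcke precompactness of $G_x$ as a group, i.e.\ with respect to its own identity neighbourhoods. The way to do this is to apply properness of $X^{k+1} \dslash G$, hence of $X^k \dslash G_x$, directly. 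Take a basic neighbourhood $V = \{g \in G_x : d(y,gy) < \varepsilon\}$ with $y \in X^k$. The set $\{[y, gy]_{G_x} : g \in G_x\} \subseteq X^{2k} \dslash G_x$ is bounded, since $d(gy, y) \le 2d(y,\bar x)$, with $\bar x = (x,\ldots,x)$ fixed by $G_x$. This set is therefore totally bounded, and the same argument as in the proof of (ii)$\Rightarrow$(iii) of the theorem yields $G_x \subseteq VFV$ with $F \subseteq G_x$ finite.

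For (ii)$\Rightarrow$(i), I will verify condition (ii) of \autoref{thm:LocallyRoelckePrecompactProper}, namely that $X^{n+1} \dslash G$ is proper for all $n$. By the equivalence above, it suffices that $X^n \dslash G_x$ is proper for all $n$. Now $G_x$ is a closed subgroup of $\Iso(X)$, it is Roelcke precompact, hence locally Roelcke precompact, and every subset of it is coarsely bounded, so its action on $X$ is coarsely proper. Also $X \dslash G_x$ is proper by hypothesis. Hence \autoref{thm:LocallyRoelckePrecompactProper}, applied to $G_x \curvearrowright X$, gives properness of $X^n \dslash G_x$ for all $n$, and we conclude.

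The main obstacle I expect is the bookkeeping in (i)$\Rightarrow$(ii): ensuring that Roelcke precompactness is intrinsic to $G_x$ rather than inherited as a subset of $G$. The direct covering argument above is designed to handle this. Some care is also needed for the $n=0$ or $n=1$ edge cases of $\iota_x$.
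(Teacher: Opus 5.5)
Your proposal is correct and follows the paper's proof. Both directions pass between $X^n \dslash G_x$ and $X^{n+1} \dslash G$ via \autoref{lem:StabiliserOrbitEmbedding} and then invoke \autoref{thm:LocallyRoelckePrecompactProper}, and your (ii)$\Rightarrow$(i) is the paper's argument. The only difference is that, for the intrinsic Roelcke precompactness of $G_x$, you re-run the covering argument by hand (the space you need to be proper is $X^{2k} \dslash G_x$, not $X^k \dslash G_x$, which is harmless since all $n$ are available). The paper instead applies item (iii) of \autoref{thm:LocallyRoelckePrecompactProper} to $G_x \curvearrowright X$ and observes that $\{g \in G_x : d(x,gx) < 1\} = G_x$.
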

\begin{proof}
  \begin{cycprf}
  \item
    By \autoref{thm:LocallyRoelckePrecompactProper}, $X^{n+1} \dslash G$ is a proper metric space for every $n$, and by \autoref{lem:StabiliserOrbitEmbedding}, $X^n \dslash G_x$ is proper.
    By \autoref{thm:LocallyRoelckePrecompactProper} applied to $G_x \curvearrowright X$, the stabiliser $G_x = \{g \in G_x : d(x,gx) < 1 \}$ is Roelcke precompact.
  \item[\impfirst]
    Since $G_x$ is Roelcke precompact, it is coarsely bounded, so $G_x \curvearrowright X$ is coarsely proper.
    By \autoref{thm:LocallyRoelckePrecompactProper}, $X^n \dslash G_x$ is proper for every $n$, and by \autoref{lem:StabiliserOrbitEmbedding}, $X^{n+1} \dslash G$ is proper.
    We conclude by \autoref{thm:LocallyRoelckePrecompactProper}.
  \end{cycprf}
\end{proof}


\section{A metrisation theorem and prime models}
\label{sec:DefinableMetrisation}

Ordinarily, predicates and formulas in continuous logic are required to be bounded, taking values in some fixed compact interval.
However, coarse geometric structure, which plays a key role in this paper, is naturally expressed by an unbounded definable metric.
Moreover, most examples, such as the Urysohn space, come already equipped such a metric.

In order to incorporate an unbounded metric in the logic, we will allow predicates (and formulas) to take values in the compact interval $[-\infty, \infty]$.
This has two consequences: first, the uniform continuity requirement for predicates should be considered with respect to the uniformity of this interval as a compact space; and second, by the compactness theorem, in some models the predicates can actually take infinite values.
In particular, if a definable metric is unbounded, then it should be viewed as a definable \emph{generalised metric}, i.e., a metric which can take the value $\infty$.

One should not worry too much about this.
The quantifiers $\sup$ and $\inf$ work just as well on $[-\infty,\infty]$, which is order-homeomorphic to $[0,1]$.
As for connectives, a $[-\infty, \infty]$-valued formula can be combined with other formulas using any continuous function (continuous connective) defined on $[-\infty, \infty]$, keeping in mind that the ordinary arithmetic operations do not always admit a continuous extension to $[-\infty, \infty]$.

Alternatively, let $\theta\colon [0,\infty] \rightarrow [0,1]$ be the order-homeomorphism $\theta(t) = 1- e^{-t}$.
It is not difficult to check that $d$ is a generalised metric if and only if $d' = \theta \circ d$ is a $[0,1]$-valued metric that satisfies the stronger triangle inequality
\begin{gather}
  \label{eq:GeneralisedDistance01}
  d'(x,z) \leq d'(x,y) + d'(y,z) - d'(x,y) d'(y,z).
\end{gather}
Therefore, specifying a definable generalised metric is the same thing as specifying a definable metric that satisfies \autoref{eq:GeneralisedDistance01}.
Similarly, we may replace other unbounded predicates by bounded ones if desired.

\begin{conv}
  \label{conv:DefinableMetric}
  A \emph{definable metric} on a structure $M$ is exactly that: a metric defined by a formula.
  A definable metric may be unbounded, in which case the defining formula has to be $[0,\infty]$-valued, in the sense of the preceding discussion.
  Variants such as bounded or generalised metric will be qualified as such explicitly.

  On the class of all models of a theory $T$ we can either have a generalised definable metric or a bounded one, and we shall attempt to be careful to always qualify them as one or the other.
\end{conv}

\begin{dfn}
  \label{dfn:OpenEquivalenceRelation}
  Let $T$ be a theory.
  An \emph{invariant binary relation} on (models of) $T$ is a set of $2$-types $R \subseteq \tS_2(T)$.
  It induces a binary relation on each model $M \vDash T$, where $R(a,b)$ holds if $\tp(a,b) \in R$.
  If it induces an equivalence relation on each model, then it is an \emph{invariant equivalence relation}.
  If $R \subseteq \tS_2(T)$ is an open subset, then it is an \emph{open relation}.
\end{dfn}

Clearly, if $d$ is a definable generalised metric on models of $T$, then the relation $d(a,b) < \infty$ is an open equivalence relation.
Below we prove the converse, using an argument inspired by Urysohn.

\begin{dfn}
  Let $T$ be a theory.
  Given subsets $A,B \subseteq \tS_2(T)$, define
  \begin{align*}
    A \cdot B &= \bigl\{\tp(a, c) : \exists b \ \tp(a, b) \in A, \, \tp(b, c) \in B\bigr\} \subseteq \tS_2(T), \\
    A^\op &= \bigl\{ \tp(b,a) : \tp(a,b) \in A \bigr\}.
  \end{align*}
  We say that $A$ is \emph{symmetric} if $A = A^\op$.
\end{dfn}

By amalgamation of types, this operation is associative, so an iterated product $\prod_{i<n} A_i$ depends only on the sequence of sets $(A_i : i < n)$.
Moreover, if $A$ and $B$ are compact, then $A \cdot B$ is compact.
Finally, this product operation admits a neutral element, namely the diagonal
\begin{gather*}
  \Delta = \bigl\{ p(x, y) \in \tS_2(T) : p \vDash x = y  \bigr\}.
\end{gather*}

\begin{lem}
  \label{lem:DefinableMetrisationBase}
  Let $T$ be a theory.
  Let $K_i \subseteq \tS_2(T)$ be compact for $i < n$, let $U \subseteq \tS_2(T)$ be open, and assume that $\prod_i K_i \subseteq U$.
  Then there exist open sets $V_i \subseteq \tS_2(T)$ such that $K_i \subseteq V_i$ and $\prod_i \overline{V}_i \subseteq U$.
\end{lem}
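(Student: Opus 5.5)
Since $\tS_2(T)$ is compact Hausdorff, the natural route is a compactness argument on the $n$-fold product, which realises tuples through amalgamation. First I reduce to a statement about closed sets. For each $i$, the compact set $K_i$ has a neighbourhood basis of open sets $V$ with $K_i \subseteq V$, and $K_i = \bigcap \overline{V}$ over this basis by normality (regularity) of $\tS_2(T)$. It therefore suffices to show
\begin{gather*}
  \bigcap_{(V_i)_i} \prod_i \overline{V}_i = \prod_i K_i ,
\end{gather*}
where the intersection runs over all tuples of open neighbourhoods $V_i \supseteq K_i$, together with a compactness step that converts this into a single tuple.

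The key observation is that the product of closed sets is closed. This follows from the same reasoning that makes products of compact sets compact. Let $C_{i} \subseteq \tS_2(T)$ be closed, hence compact, for $i<n$. Consider the closed set $W \subseteq \tS_{n+1}(T)$ of types $q(x_0,\ldots,x_n)$ with $\tp(x_i,x_{i+1}) \in C_i$ for each $i$. Then $\prod_i C_i$ is the image of $W$ under the continuous restriction map $\tS_{n+1}(T) \to \tS_2(T)$ to the variables $(x_0,x_n)$. By amalgamation of types, any chain of $2$-types with matching middle types is realised in some model, which justifies this description. Hence $\prod_i C_i$ is compact.

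Now suppose, for contradiction, that for every tuple $(V_i)_i$ the set $\prod_i \overline{V}_i \setminus U$ is nonempty. The sets
\begin{gather*}
  W_{(V_i)} = \bigl\{ q \in \tS_{n+1}(T) : \tp(x_i,x_{i+1}) \in \overline{V}_i \text{ for all } i, \ \tp(x_0,x_n) \notin U \bigr\}
\end{gather*}
are then nonempty and closed. They form a downward-directed family, since the neighbourhoods of each $K_i$ are closed under finite intersections. By compactness of $\tS_{n+1}(T)$, some $q$ lies in all of them. For that $q$, each $\tp(x_i,x_{i+1})$ lies in $\bigcap \overline{V} = K_i$, so $\tp(x_0,x_n) \in \prod_i K_i \subseteq U$. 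This contradicts $\tp(x_0,x_n) \notin U$. Hence some tuple $(V_i)_i$ satisfies $\prod_i \overline{V}_i \subseteq U$.

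The only delicate point is the identity $\bigcap_V \overline{V} = K_i$. It requires separating $K_i$ from a point outside it by disjoint open sets, which holds because $\tS_2(T)$ is compact Hausdorff. It also uses the realisation of $(n+1)$-types from consecutive $2$-types, which the paper already invokes when it asserts associativity of the product.
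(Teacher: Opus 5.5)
Your argument is correct and is essentially the paper's proof: a contradiction argument using compactness of the type space of chains. The differences are minor: you apply the finite intersection property in $\tS_{n+1}(T)$ to all $n$ factors at once, while the paper first inducts down to $n=2$ and takes a limit point of a net in $\tS_3(T)$; you also make explicit the regularity fact $\bigcap_V \overline{V} = K_i$, which the paper leaves implicit (your paragraph showing that products of closed sets are closed is harmless but is not used in the final argument).
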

\begin{proof}
  By induction, it suffices to consider the case $n = 2$.
  Suppose, towards contradiction, that for every pair $\alpha = (V_0, V_1)$ of open neighbourhoods of $K_0$ and $K_1$, there exists a type $p_\alpha(x,y,z) \in \tS_3(T)$ such that $p_\alpha|_{xy} \in \cl{V_0}$, $p_\alpha|_{yz} \in \cl{V_1}$, and $p_\alpha|_{xz} \notin U$.
  The $p_\alpha$ form a net if we order the indices by reverse inclusion and any limit point of this net witnesses that $K_0 \cdot K_1 \nsubseteq U$.
\end{proof}

The following is a technical definition used in the proof of the metrisation theorem.
\begin{dfn}
  \label{dfn:DefinableMetrisationGoodFamily}
  Let $T$ be a theory, and let $\sim$ be an open equivalence relation on models of $T$.
  Let $S \sub \bR^{>0}$ and for each $s \in S$, let $U_s \subseteq \tS_2(T)$ be an open set.

  We call the family $U_S = \{U_s : s \in S\}$ \emph{good} if
  \begin{itemize}
  \item each $U_s$ is symmetric and satisfies $\Delta \subseteq U_s \subseteq \overline{U_s} \subseteq {\sim}$; and
  \item whenever $s_0, \dots, s_{n-1}, t \in S$ are such that $\sum_{i<n} s_i < t$, we have $\prod_{i<n} \overline{U_{s_i}} \subseteq U_t$.
  \end{itemize}
\end{dfn}

\begin{lem}
  \label{lem:DefinableMetrisationGoodFamilyExtension}
  Let $S \sub \bR^{>0}$ be finite and let $U_S$ be a good family as per \autoref{dfn:DefinableMetrisationGoodFamily}.
  Then for every $t > 0$, $U_S$ can be extended to a good family $U_{S \cup \{t\}}$.

  Moreover, if $V_0 \subseteq \tS_2(T)$ is open such that $\Delta \subseteq V_0 \subseteq \overline{V_0} \subseteq {\sim}$, and if $t < s$ (respectively, $t > s$) for all $s \in S$, then we may choose $U_t$ to be contained in (respectively, contain) $V_0$.
\end{lem}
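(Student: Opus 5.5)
The plan is to identify exactly which constraints the new set $U_t$ has to satisfy, and to notice that they come in two kinds: lower constraints (compact sets that $U_t$ must contain) and upper constraints (products involving $\overline{U_t}$ that must land inside given open sets). We may assume $t \notin S$. Let $m = \min S > 0$. A sequence $s_0,\dots,s_{n-1}$ of elements of $S \cup \{t\}$ with sum less than some fixed bound has length at most that bound divided by $\min(m,t)$. Since $S$ is finite, every condition in \autoref{dfn:DefinableMetrisationGoodFamily} that involves $t$ is therefore one of finitely many conditions.

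\emph{Lower constraints.} A sequence from $S\cup\{t\}$ with sum $<t$ cannot contain $t$. So the only conditions with $t$ on the right-hand side say that $U_t$ contains
\begin{gather*}
  K = \Delta \cup \bigcup \Bigl\{ \textstyle\prod_{i<n} \overline{U_{s_i}} : s_i \in S,\ \sum_i s_i < t \Bigr\}.
\end{gather*}
This is a finite union, and each product of compact sets is compact, so $K$ is compact. Reversing a sequence does not change its sum, and the $U_s$ are symmetric, so $K$ is symmetric. Finally $K \subseteq {\sim}$, because each $\overline{U_s} \subseteq {\sim}$ and $\sim$ is an equivalence relation, hence closed under the product.

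\emph{Upper constraints.} The remaining conditions concern sequences $s_0,\dots,s_{n-1}$ from $S\cup\{t\}$ that contain $t$ at least once and have sum $< s$ for some $s\in S$; each requires $\prod_i \overline{U_{s_i}} \subseteq U_s$. Suppose we replace every occurrence of $\overline{U_t}$ by $K$ and expand the union. Each term becomes a product of sets $\overline{U_r}$ with $r \in S$ whose total sum is still $<s$, possibly with some factors $\Delta$, which are neutral. By goodness of $U_S$, each such product lies in $U_s$; if the product is empty it is $\Delta$, and $\Delta \subseteq U_s$. So every upper constraint holds with $K$ in place of $\overline{U_t}$.

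We then apply \autoref{lem:DefinableMetrisationBase}, taking $K_i = K$ at the positions where $s_i = t$ and $K_i = \overline{U_{s_i}}$ elsewhere. This gives open sets containing the $K_i$ whose closures still have product inside $U_s$. Keep only the open sets at the $t$-positions and intersect them over all positions and all of the finitely many constraints; call the result $W \supseteq K$. By regularity of the compact space $\tS_2(T)$, and since $K \subseteq {\sim}$ with $\sim$ open, we may shrink $W$ so that $\overline{W} \subseteq {\sim}$. Put $U_t = W \cap W^{\op}$. Transposition is a homeomorphism of $\tS_2(T)$ and $K$ is symmetric, so $U_t$ is open, symmetric and contains $K \supseteq \Delta$. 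Moreover $\overline{U_t} \subseteq \overline{W}$, so all the upper constraints remain true, as does $\overline{U_t} \subseteq {\sim}$.

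\emph{The ``moreover'' clause.} If $t < s$ for all $s \in S$, no nonempty sequence from $S$ has sum $<t$, so $K = \Delta$. We may then also intersect $W$ with $V_0 \cap V_0^{\op}$, which is an open set containing $\Delta$, and this gives $U_t \subseteq V_0$. If $t > s$ for all $s\in S$, there are no upper constraints. We take $U_t = W \cup V_0 \cup V_0^{\op}$, where $W \supseteq K$ is symmetric, open, and has $\overline{W} \subseteq {\sim}$. Since $\sim$ is symmetric, $\overline{V_0^{\op}} = \overline{V_0}^{\op} \subseteq {\sim}$, so this $U_t$ is open, symmetric and has closure inside $\sim$.

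The main obstacle is the upper-constraint step: the conditions are about $\overline{U_t}$, not $U_t$, and $t$ may occur several times in one product. This is exactly why we route through $K$ and use \autoref{lem:DefinableMetrisationBase}, which handles several compact factors at once. The finiteness bound on sequence lengths is what allows us to intersect only finitely many open sets.
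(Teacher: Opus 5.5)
Your proposal is correct and follows the paper's proof essentially step for step. It uses the same compact symmetric set $K \subseteq {\sim}$ (your explicit $\Delta$ plays the role of the paper's $n=0$ term) and applies \autoref{lem:DefinableMetrisationBase} to the finitely many constraints with $K$ placed at the $t$-positions, then shrinks so that the closure lies in ${\sim}$ and symmetrises via $V \cap V^{\op}$. Beyond the paper's proof, you spell out two things it leaves implicit: the substitution argument showing that $\prod_i K_{s_i} \subseteq U_r$, and the verification of the ``moreover'' clause, which the paper simply calls immediate.
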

\begin{proof}
  Let
  \begin{equation*}
    K = \bigcup \set[\Big]{\prod_{i<n} \overline{U_{s_i}} : s_0, \dots, s_{n-1} \in S,\ \sum_{i < n} s_i < t}.
  \end{equation*}
  Note that $K$ is compact as a finite union of compact sets.
  As ${\sim}$ is an equivalence relation and each $\cl{U_{s_i}} \sub {\sim}$, we have that $K \sub {\sim}$.
  We include the case $n = 0$ in the above union, so $\Delta = \prod \emptyset \sub K$.
  Since each $U_s$ is symmetric, so is $K$.

  Set $K_s = K'_s = \overline{U_s}$ for $s \in S$ and $K_t = K$.
  Let us consider a sequence $(s_i : i < n)$, where $s_i \in S \cup \{t\}$, and $r \in S$, such that $\sum_i s_i < r$.
  By the definition of $K$ and the fact that the family $U_S$ is good, we have $\prod_i K_{s_i} \subseteq U_r$.
  By \autoref{lem:DefinableMetrisationBase}, there exists an open set $V \supseteq K$ such that, if we define $K'_t = \overline{V}$, then $\prod_i K'_{s_i} \subseteq U_r$ as well.
  There exist only finitely many such sequences to consider, so there exists $V$ that works for all of them.
In addition, we may assume $\overline{V} \subseteq {\sim}$.
  Letting $U_t = V \cap V^\op$, the family $U_{S \cup \{t\}}$ is good.

  The moreover part is immediate.
\end{proof}

\begin{thm}
  \label{thm:DefinableMetrisation}
  Let $T$ be a theory in a separable language, and let $\sim$ be an open equivalence relation on models of $T$.
  Then $T$ admits a definable generalised metric $d$ such that $d(a,b) < \infty$ if and only if $a \sim b$.

  Moreover, this definable metric is unique up to uniform and coarse equivalence.
  In other words, if $d$ and $d'$ are two such metrics, then for every $\varepsilon > 0$ there exists $\delta > 0$ such that
  \begin{gather*}
    d(x,y) < \delta \quad \Longrightarrow \quad d'(x,y) < \varepsilon,
    \qquad
    d(x,y) > 1/\delta \quad \Longrightarrow \quad d'(x,y) > 1/\varepsilon,
  \end{gather*}
  and similarly the other way round.
\end{thm}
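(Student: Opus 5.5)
We will run an Urysohn-type construction, building a good family $U_S$ indexed by a countable dense set $S$ of positive reals. We then define
\begin{gather*}
  d(a,b) = \inf\bigl\{ s \in S : \tp(a,b) \in U_s \bigr\},
\end{gather*}
with $\inf \emptyset = \infty$.

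First we construct the family. The language is separable, so $\tS_2(T)$ is a compact metrisable space. Hence ${\sim}$, being open, is a countable union of compact sets; more usefully, it admits a countable family of open sets $W_k$ with $\Delta \subseteq W_k \subseteq \cl{W_k} \subseteq {\sim}$. We may take the $W_k$ increasing with union ${\sim}$, using normality and the fact that ${\sim}$ is open and contains the compact set $\Delta$. We may also take a decreasing sequence $V_k$ of such open sets with $\bigcap_k V_k = \Delta$: since $\Delta$ is closed in a metrisable compact space, it is a $G_\delta$.

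Let $S$ be the positive dyadic rationals, enumerated as $t_0, t_1, \ldots$ in an order that we interleave with two kinds of new points. The first kind are tiny points $2^{-k}$ below all current ones. The second kind are large points $k$ above all current ones. Applying \autoref{lem:DefinableMetrisationGoodFamilyExtension} repeatedly, we extend the good family one index at a time. For tiny indices we use the moreover part to get $U_{2^{-k}} \subseteq V_k$. For large indices we get $U_k \supseteq W_k$. At every finite stage the family is good, so the union is good, since goodness only involves finitely many indices at a time.

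Next we verify that $d$ is a generalised metric. Symmetry follows from the symmetry of each $U_s$, and $d(a,a) = 0$ from $\Delta \subseteq U_s$. For the triangle inequality, take $s, s' \in S$ with $\tp(a,b) \in U_s$ and $\tp(b,c) \in U_{s'}$. For any $t \in S$ with $t > s + s'$, goodness gives $\tp(a,c) \in U_s \cdot U_{s'} \subseteq U_t$; by density of $S$ we obtain $d(a,c) \leq d(a,b) + d(b,c)$. If $d(a,b) = 0$, then $\tp(a,b) \in \bigcap_k V_k = \Delta$, so $a = b$. Finally, $d < \infty$ holds iff $\tp(a,b) \in \bigcup_s U_s$. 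This union contains every $W_k$ and is contained in ${\sim}$, so it equals ${\sim}$.

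Definability is the main obstacle. We must show that the map $p \mapsto d(p)$ on $\tS_2(T)$ is continuous into $[0,\infty]$. The set $\{d < r\} = \bigcup_{s<r} U_s$ is open. For the other direction, $\{d \le r\} = \bigcap_{s > r} U_s = \bigcap_{s>r} \cl{U_s}$, using $\cl{U_s} \subseteq U_t$ for $s < t$, which is goodness with $n = 1$. This set is closed. Hence $d$ is continuous, so it is a definable predicate with values in $[0,\infty]$; it is expressed via $\theta \circ d$ if one wants $[0,1]$-values.

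For uniqueness, let $d, d'$ be two such metrics. For the uniform part, the set $\{p : d'(p) \ge \varepsilon\}$ is compact and disjoint from $\Delta = \{d = 0\}$, so $d$ has a positive minimum $\delta$ on it by continuity. For the coarse part, the set $\{p : d'(p) \le 1/\varepsilon\}$ is compact and contained in ${\sim} = \{d < \infty\}$. Since $d$ is continuous there and finite, it is bounded by some $1/\delta$; then $d > 1/\delta$ forces $d' > 1/\varepsilon$. The same argument works symmetrically with $d$ and $d'$ exchanged.
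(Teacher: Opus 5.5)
Your proposal is correct and is essentially the paper's proof. You build a good family $U_S$ over a countable dense $S$ with the extension lemma, using its moreover clause and the sequences $V_k$, $W_k$ to make explicit the paper's remark that second countability lets one arrange $\bigcap_s U_s = \Delta$ and $\bigcup_s U_s = {\sim}$; you set $d(p) = \inf\{s : p \in U_s\}$, get continuity from $\{d<r\} = \bigcup_{s<r} U_s$ and $\{d \le r\} = \bigcap_{s>r} \cl{U_s}$, the triangle inequality from goodness, and uniqueness by compactness, all as in the paper. One bookkeeping point to tidy: a point such as $2^{-k}$ or $k$ may already have been enumerated, so at each stage choose the new tiny (respectively large) index far enough out to lie below (respectively above) all current indices; this is harmless because the $V_k$ decrease and the $W_k$ increase.
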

\begin{proof}
  Choose a countable dense set $S \sub \bR^{> 0}$.
  By \autoref{lem:DefinableMetrisationGoodFamilyExtension} and induction, we may construct a good family of open sets $U_S$.
  Since the language is separable, $\tS_2(T)$ is second-countable, so we can ensure that $\bigcap_s U_s = \Delta$ and $\bigcup_s U_s = {\sim}$.
  Since $S$ is dense and for all $s < t$, $\cl{U_s} \sub U_t$, for every $p \in \tS_2(T)$, we have the equality
  \begin{gather*}
    \sup \, \bigl\{ s \in S : p \notin \cl{U_s} \bigr\}
    = \inf \, \bigl\{ s \in S : p \in U_s \bigr\}.
  \end{gather*}
  Call this common value $d(p)$.
  In particular, $p \in \Delta$ if and only if $d(p) = \inf S = 0$, and $p \notin {\sim}$ if and only if $d(p) = \sup S = \infty$.

  For $p \in \tS_2(T)$ and $t \in \bR^+$, $d(p) < t \iff p \in \bigcup_{s < t} U_s$ and $d(p) \leq t \iff p \in \bigcap_{s > t} \cl{U_s}$, so $d$ is continuous.
  Therefore we may identify it with a $[0,\infty]$-valued formula $d(x,y)$.
  Consider $a$, $b$, and $c$ in some model of $T$.
  By construction, $d(a,b) = d(b,a)$, and $d(a,b) = 0$ if and only if $\tp(a,b) \in \Delta$, if and only if $a = b$.
  Assume now that $s,t,r \in S$, $d(a,b) < s$, $d(b,c) < t$, and $s+t < r$.
  Then $\tp(a,b) \in U_s$, $\tp(b,c) \in U_t$, and $\overline{U_s} \cdot \overline{U_t} \subseteq U_r$.
  Therefore $d(a,c) \leq r$, which is enough to prove the triangle inequality.
  We conclude that the formula $d$ defines a generalised metric.
  In addition, $d(a,b) < \infty$ if and only if $\tp(a,b) \in {\sim}$ if and only if $a \sim b$, as desired.

  For the moreover part, assume that $d'$ is another such generalised metric.
  If $\varepsilon > 0$, then the closed condition $d(x,y) = \infty$ implies the open condition $d'(x,y) > 1/\varepsilon$.
  Therefore, by compactness, there exists $\delta > 0$ such that $d(x,y) > 1/\delta$ implies $d'(x,y) > 1/\varepsilon$.
  The argument for uniform equivalence near zero is similar (and standard).
\end{proof}

\begin{rmk}
  \label{rmk:metrisation-Robinson}
  The proof of \autoref{thm:DefinableMetrisation} only uses compactness and amalgamation, so the conclusion holds in the greater generality of \emph{Robinson theories}, i.e., universal first-order theories whose models satisfy the amalgamation property.
In that case, both the equivalence relation $\sim$ and the generalised metric $d$ are defined using quantifier-free types.
\end{rmk}

Let us turn to prime models.
Let $T$ be a complete theory in a  separable language.
Recall that $p \in \tS_n(T)$ is \emph{isolated} if the logic topology and the topology given by the type metric coincide at $p$; equivalently, if $p$ is realised in every model of $T$; equivalently, if the set of realisations of $p$, in any model of $T$, is a definable set.
Let $\fI_n(T) \subseteq \tS_n(T)$ denote the space of isolated $n$-types of $T$.
A model $M \vDash T$ is \emph{atomic} if it only realises isolated types.

It is a standard fact that a model $M \vDash T$ is \emph{prime} (embeds elementarily into every model of $T$) if and only if it is separable and atomic.
A prime model exists if and only if $\fI_n(T)$ is dense in $\tS_n(T)$ for every $n$, and it is unique up to isomorphism.
Moreover, if $M$ is a prime model, then it is \emph{homogeneous}: if $a,b \in M^n$ have the same type, then there exists $g \in \Aut(M)$ such that $d(ga,b)$ is arbitrarily small.

Given $M \vDash T$ and $p \in \tS_n(T)$, let $p(M)$ denote the set of realisations of $p$ in $M$.
For $a \in M^n$, the type distance $d\bigl( \tp(a), p \bigr)$ is equal to the set distance $d\bigl( a, p(N) \bigr)$ for a sufficiently saturated $N \succeq M$, but in general may be strictly smaller than $d\bigl( a, p(M) \bigr)$ (in fact, $p$ need not even be realised in $M$).
It is an immediate yet important observation that this cannot happen when $p$ is isolated.

\begin{fct}
  \label{fct:IsolatedTypeDistance}
  Let $T$ be a complete theory and $p \in \fI_n(T)$.
  Then the distance to $p$ in $\tS_n(T)$ is a continuous function, i.e., a definable predicate, that we may denote by $d(x,p)$.
  If $M \vDash T$ is any model, and $a \in M^n$, then $d(a,p)$ is the distance from $a$ to the set of realisations of $p$ in $M$.

  Consequently, if $M$ is a prime model of $T$ and $G = \Aut(M)$, then $M^n \dslash G \simeq \fI_n(T)$ isometrically via the map $[a] \mapsto \tp(a)$, and $[a]$ is the set of realisations of $\tp(a)$ in $M$.
\end{fct}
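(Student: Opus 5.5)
The statement to prove is \autoref{fct:IsolatedTypeDistance}. I would work with a sufficiently saturated elementary extension and reduce everything to the characterisation of isolation in terms of the two topologies on $\tS_n(T)$.

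\emph{Continuity of the distance.} Consider the function $p' \mapsto d(p',p)$ on $\tS_n(T)$. The type metric is lower semicontinuous in the logic topology, so the sets $\set{p' : d(p',p) \leq r}$ are closed; this is a standard fact, seen by realising types in a saturated model and noting that the condition ``there is a realisation of $p$ within $r$'' is type-definable. For upper semicontinuity I use isolation. Suppose $d(p',p) < r$ and pick $r' < r$ with $d(p',p) < r'$. Since $p$ is isolated, the metric ball $B(p, r - r')$ contains a logic neighbourhood $W$ of $p$. The set of types $q$ that admit a realisation within distance $r'$ of some realisation of a type in $W$ contains an open neighbourhood of $p'$: the condition ``$\exists y\, (d(x,y) < r' \wedge \tp(y) \in W)$'' is open, because existential quantification over an open condition gives an open set of types. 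Every such $q$ satisfies $d(q,p) < r' + (r - r') = r$. Hence $d(\cdot,p)$ is continuous, and therefore it is a definable predicate $d(x,p)$.

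\emph{Distance in an arbitrary model.} Let $M \vDash T$ and $a \in M^n$. We have $d(a,p(M)) \geq d(\tp(a),p)$ trivially, so it suffices to prove the reverse inequality. Let $r > d(\tp(a),p)$. Then $M \vDash \inf_y \bigl(\max(d(a,y) - r,\ d(y,p))\bigr) < 0$ is not quite the right formula to use directly, so I argue instead as follows. Since $d(x,p)$ is a formula, $M \vDash \inf_y \max\bigl(d(a,y),\, r\,d(y,p)\bigr)$-type conditions transfer from a saturated extension: there is $b \in M$ with $d(a,b) < r$ and $d(b,p) < \varepsilon$. Iterating, I find $b_k \in M$ with $d(b_k,b_{k+1}) < 2^{-k}$ and $d(b_k,p) \to 0$. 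The limit lies in $M$ because $M$ is complete, and it realises $p$ by continuity of $d(x,p)$, giving $d(a,p(M)) \leq r + \varepsilon'$. This iteration, which requires approximating realisations inside $M$ itself rather than in an extension, is the main step to get right. The tool that makes it work is the elementary transfer of $\inf_y$ of the definable predicate $d(y,p)$.

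\emph{The prime model.} If $M$ is prime and $G = \Aut(M)$, every $a \in M^n$ has an isolated type, so the map $[a] \mapsto \tp(a)$ lands in $\fI_n(T)$, and it is onto because isolated types are realised. By homogeneity, $\tp(a) = \tp(b)$ implies $b \in \cl{Ga}$; conversely, orbit closures preserve type, since types are closed under metric limits. Hence $[a] = \tp(a)(M)$. For isometry, $d([a],[b]) = d(a, \tp(b)(M)) = d(\tp(a),\tp(b))$ by the previous step applied to $p = \tp(b)$.
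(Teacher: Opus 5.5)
Your proof is correct. The third part (the prime model: homogeneity, closure of types under limits, isometry via the second assertion) is essentially the paper's argument. For the first two assertions, however, you take a more first-principles route.

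\textbf{The paper's route.} It takes as a black box the standard characterisation that $p$ is isolated iff its set of realisations is a definable set in every model of $T$. It then quantifies over that set, obtaining a single formula $\inf_{p(y)} d(x,y)$ whose value in every model $M$ is the distance to $p(M)$. Continuity of $d(\cdot,p)$ follows at once. So does the fact that this distance does not change in an elementary extension, by elementarity, and hence agrees with the type distance computed in a saturated extension.

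\textbf{Your route.} You start from the definition of isolation adopted in the paper, namely that the logic and metric topologies agree at $p$. Lower semicontinuity of the type metric, together with your openness argument via projection of the open set $\{d(x,y)<r',\ \tp(y)\in W\}$, gives continuity of $d(\cdot,p)$. Your Cauchy-sequence argument then pulls approximate realisations from a saturated extension down into $M$, and uses completeness of $M$ to get genuine realisations.

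\textbf{Comparison.} That second step is exactly the standard proof that the zero set of a predicate with this approximation property is a definable set. In effect, you reprove the direction of the black box that the paper invokes. Your version is longer but self-contained, and it makes visible where completeness of $M$ enters. In the paper this is hidden inside the phrase ``definable set.'' The paper's version, by contrast, is a two-line consequence of standard theory.

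\textbf{Two presentational fixes.}
\begin{enumerate}
\item The condition you transfer should be stated cleanly, for example as $\inf_y \max\bigl(d(a,y)-r,\ d(y,p)-\varepsilon\bigr) < 0$. This holds in the saturated extension, witnessed by a realisation of $p$ within $r$ of $a$, and hence holds in $M$. Your two displayed attempts are garbled: the first can never be negative, and the second is not a well-formed condition.
\item In the iteration, use tolerances $\varepsilon 2^{-k}$, that is, $d(b_k,b_{k+1})<\varepsilon 2^{-k}$ and $d(b_{k+1},p)<\varepsilon 2^{-k-1}$. This way the accumulated error is bounded by $2\varepsilon$, which can be made as small as you like, rather than by a fixed constant.
\end{enumerate}
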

\begin{proof}
  Since the set of realisations of $p$ is definable, we may quantify over it.
  In particular, there exists a formula that we may denote by $\varphi(x) = \inf_{p(y)} d(x,y)$, such that $\varphi(a)$ evaluates to the distance to $p(M)$ in every model $M \vDash T$.
  In particular, this distance cannot change when passing to an elementary extension, so it coincides with $d\bigl( \tp(a), p \bigr)$.
  This proves our first assertion.

  Assume now that $M$ is prime, and let $a \in M^n$.
  Then every $b \in G a$ realises $\tp(a)$, and since formulas are uniformly continuous, every $b \in \overline{G a} = [a]$ realises $\tp(a)$.
  Conversely, since $M$ is homogeneous, every realisation of $p$ in $M$ belongs to $[a]$.
  Since the types realised in $M$ are exactly the isolated ones, the map $[a] \mapsto \tp(a)$ defines a bijection $M^n \dslash G \simeq \fI_n(T)$.
  It is isometric by our first assertion.
\end{proof}

We conclude this section with a construction of a structure from an appropriate group action $G \curvearrowright A$.
The constructed structure is automatically the prime model of its theory.

\begin{dfn}
  \label{dfn:GroupActionStructure}
  Let $A$ be a complete separable metric space, $G$ a group, and $G \curvearrowright A$ an action by isometries.
  We define a metric structure $A^G$ by equipping $A$ with an $n$-ary predicate symbol $D_a$ for each $n \in \bN$ and each $a \in A^n$, interpreted as
  \begin{gather*}
    D_a(b) = d(b, Ga).
  \end{gather*}
\end{dfn}

If $a_k \rightarrow a$ in $A^n$, then $D_{a_k} \rightarrow D_a$ uniformly modulo $\Th(A^G)$.
Therefore, the language of $A^G$ is separable (modulo its theory).

In $A^G$, a predicate $D_a$ only depends on the orbit closure $[a] = \overline{G a} \in A^n \dslash G$.
Therefore, given $p \in A^n \dslash G$, we may denote by $D_p$ any predicate $D_a$ such that $p = [a]$.
Clearly, $G$ acts on $A^G$ by automorphisms.

\begin{prp}
  \label{prp:GroupActionStructureMinimal}
  Let $(A,d)$ be a complete separable metric space, let $G \curvearrowright A$ be an action by isometries, and let $A^G$ be as in \autoref{dfn:GroupActionStructure}.
  Then $A^G$ is the prime model of its theory $T$.
  If $G \leq \Iso(A)$ is a closed subgroup, then $\Aut(A^G) = G$.

  Conversely, let $\cL$ be some vocabulary, and $A^\cL$ an interpretation of $\cL$ on $(A,d)$ such that $\cL$ is separable modulo $T^\cL = \Th(A^\cL)$.
  Suppose moreover that $A^\cL$ is the prime model of $T^\cL$ and $G = \Aut(A^\cL)$.
  Then $D_a$ is definable in $A^\cL$ for every $a \in A^n$.
\end{prp}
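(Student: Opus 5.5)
We prove the three assertions in order: primeness of $A^G$, then $\Aut(A^G)=G$, then the converse. In each case the aim is to express the relevant orbit-closure distances by formulas or definable predicates.

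\emph{Primeness.} Since the language is separable modulo $T$ and $A$ is separable, it suffices to show that every tuple $a \in A^n$ realises an isolated type. Let $p = \tp(a)$. The predicate $D_a$ is a formula, and in $A^G$ we have $D_a(b) = d(b,Ga) = d\bigl([b],[a]\bigr)$. We claim that $D_a(x)$ computes the type distance $d(x,p)$ in every model. The key observation is that $A^G$ satisfies, for all $a,b \in A^n$, the sentences
\begin{gather*}
  \sup_x \bigl| D_a(x) - \inf_y \bigl(d(x,y) + D_a(y)\bigr)\bigr| = 0, \qquad \sup_x \bigl(D_b(x) - D_a(x) - D_b(a')\bigr) \leq 0
\end{gather*}
for $a'$ realising $D_a = 0$, together with $D_a(a)=0$. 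In a model $N \vDash T$, we first show that $\{D_a = 0\}$ is exactly the realisations of $p$. If $D_a(c)=0$, then $|D_b(c) - D_b(a)| \leq d([a],[b])$, and since every atomic formula is some $D_b$ (evaluated at an $a$-like tuple, depending continuously on $[a]$), $c \equiv a$. Conversely, realisations of $p$ satisfy $D_a = 0$. We then show that $D_a$ is the distance to its zero set, using the first sentence and the fact that $\inf_y D_a(y)$ is approximately attained in steps. This is the standard criterion that a zero set of a formula whose value equals the distance to that zero set is definable. Hence $p$ is isolated, with $d(x,p)=D_a(x)$. It follows that $A^G$ is atomic and separable, hence prime.

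\emph{Automorphism group.} Clearly $G \leq \Aut(A^G)$. Conversely, let $\sigma \in \Aut(A^G)$, let $a$ enumerate a finite subset of a countable dense set, and let $\varepsilon>0$. Since $\sigma$ preserves $D_a$, we get $D_a(\sigma a) = D_a(a) = 0$, so some $g \in G$ satisfies $d(ga,\sigma a)<\varepsilon$. Thus $\sigma$ lies in the pointwise-convergence closure of $G$ in $\Iso(A)$, which is $G$ since $G$ is closed.

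\emph{Converse.} Let $A^\cL$ be prime with $\Aut(A^\cL)=G$. By \autoref{fct:IsolatedTypeDistance}, $[a]$ is the set of realisations of $\tp(a)$, and $d(x,\tp(a))$ is a definable predicate whose value at $b$ is $d\bigl(b,[a]\bigr) = d(b,Ga) = D_a(b)$. Hence $D_a$ is definable.

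The main obstacle is the primeness step, namely showing that $D_a$ defines the type distance in arbitrary models and not only in $A^G$. If the direct argument above gets messy, the fallback is the characterisation of isolation via the type metric: show that any type $q$ with $\inf D_a < \delta$ on $q$ satisfies $d(q,p) \leq \delta$. To do this, approximate $q$ within $A^G$ by tuples $b$ with $d([b],[a])<\delta$, and move to $ga$ with $d(b,ga)<\delta$.
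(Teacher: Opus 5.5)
Your arguments for $\Aut(A^G) = G$ and for the converse are the same as the paper's. For primeness the paper works only inside $A^G$. Since $G$ acts by automorphisms and formulas are uniformly continuous, every $b \in [a]$ realises $p = \tp(a)$, and any realisation $b$ of $p$ has $D_a(b) = D_a(a) = 0$. So $p(A^G) = [a]$, its distance predicate is $D_a$, and the standard criterion (a type whose realisation set in some model is definable is isolated) finishes.

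Your primary attempt to check this in an arbitrary model does not go through as written. The first displayed sentence only says that $D_a$ is $1$-Lipschitz. That $D_a$ is the distance to its zero set needs the approximate attainment property: every $x$ has points $y$ with $D_a(y) = 0$ and $d(x,y)$ close to $D_a(x)$. This holds in $A^G$ because $D_a(x) = d(x,Ga)$, but you only allude to it.

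More seriously, the step showing that $D_a(c) = 0$ forces $c \equiv a$ fails. Your bound $|D_b(c) - D_b(a)| \leq d([a],[b]) = D_b(a)$ only gives $0 \leq D_b(c) \leq 2D_b(a)$; you need both triangle inequalities, i.e.\ $|D_b(x) - D_b(a)| \leq D_a(x)$. Even equality on all atomic formulas only yields $c \equiv^{\mathrm{qf}} a$. Nothing you have established shows that the quantifier-free type of $a$ implies its complete type; the paper proves quantifier elimination only under the properness hypotheses of \autoref{thm:GroupActionStructure}. The repair is to use arbitrary formulas. Each $[0,1]$-valued formula $\psi$ with continuity modulus $\mu_\psi$ is constant on $[a]$ in $A^G$. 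Hence $A^G$ satisfies $|\psi(x) - \psi(a)| \leq \mu_\psi\bigl(D_a(x)\bigr)$ for all $x$, and this condition transfers to every model of $T$.

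Your fallback, by contrast, is correct and complete, and it is the version to keep. Suppose $D_a(x)^q < \delta$. Density of the types realised in $A^G$ gives $b_k \in A^n$ with $\tp(b_k) \to q$ and $D_a(b_k) < \delta$. Hence there are $g_k \in G$ with $d(b_k, g_k a) < \delta$, and $g_k a \vDash p$, so $d\bigl(\tp(b_k), p\bigr) < \delta$. Lower semicontinuity of the type metric in the logic topology, which you should state explicitly, then gives $d(q,p) \leq \delta$. So the logic-open set $\{D_a < \delta\}$ contains $p$ and lies in the closed $\delta$-ball around $p$, which is exactly isolation of $p$. This is a self-contained proof, in this case, of the standard criterion the paper invokes, and it avoids transferring any axioms to other models.
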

\begin{proof}
  Let $a \in A^n$.
  Since $G$ acts on $A^G$ by automorphisms, every $b \in G a$ realises $\tp(a)$, and since formulas are uniformly continuous, so does every $b \in [a]$.
  Conversely, if $b$ realises $\tp(a)$, then $D_a(b) = D_a(a) = 0$, so $b \in [a]$.
  Therefore, if $p = \tp(a)$, then the set of realisations of $p$ in $A^G$ is $[a]$, and the distance to it is defined by $D_a$.
  In particular, every type realised in $A^G$ is isolated, so $A^G$ is the prime model of its theory.
  If $G \leq \Iso(A)$, then $G$ is, by construction, dense in $\Aut(A^G)$; if $G$ is closed, then $G = \Aut(A^G)$.

  Conversely, assume that $A^\cL$ has separable language, is prime, and its automorphism group is $G$.
  If $a \in A^n$ and $p = \tp^\cL(a)$, then the set of realisations of $p$ is $[a]$, and $D_a(x) = d(x,p)$ is definable in $A^\cL$ by \autoref{fct:IsolatedTypeDistance}.
\end{proof}

The second part of \autoref{prp:GroupActionStructureMinimal} asserts that $A^G$ is the \emph{least} prime structure on $A$ with automorphism group $G$.

\begin{rmk}
  \label{rmk:GroupActionStructureAlternateLanguage}
  Consider the special case of \autoref{dfn:GroupActionStructure}, where $G$ is a Polish group equipped with a left-invariant metric $d_L$, and the action is $G \curvearrowright \widehat{G}_L$.

  By \autoref{fct:GroupActionCompletion}, this extends to a semi-group action $\widehat{G}_L \curvearrowright \widehat{G}_L$.
  For $a \in \widehat{G}_L$, let us denote the map $x \mapsto xa$ by $R_{a} \colon \widehat{G}_L \rightarrow \widehat{G}_L$.
  These maps are uniformly continuous, and the family of symbols consisting of the distance and the maps $R_a$ for $a \in \widehat{G}_L$ (or even just in $G$) can be construed as an alternate language for $\widehat{G}_L$.
  It will be useful, especially for the examples, to observe that this language is bi-definable with the language of $\widehat{G}_L^G$ given in \autoref{dfn:GroupActionStructure}.

  Indeed, in one direction, for $g \in G^n$,
  \begin{gather*}
    D_g(x) = \inf_y \, \bigvee_{i<n} d(x_i,y g_i).
  \end{gather*}
  Since $G$ is dense in $\widehat{G}_L$, this is enough to recover the language of $\widehat{G}_L^G$.

  Conversely, fix $a \in \widehat{G}_L$.
  Since $R_a$ is uniformly continuous, there exists a continuous, increasing continuity modulus $\mu\colon [0,\infty] \rightarrow [0,\infty]$ such that $\mu(0) = 0$ and
  \begin{gather*}
    d(xa,ya) \leq \mu \circ d(x,y).
  \end{gather*}
  We may assume that $\mu(t) \geq t$ for all $t$.
  If $x,y \in \widehat{G}_L$ and $h \in G$, then
  \begin{gather*}
    \mu \circ d\bigl( (x,y), (h,ha)\bigr)
    \geq \mu \circ d(x, h) \vee d(y,ha)
    \geq d(xa, ha) \vee d(y,ha)
    \geq d(xa,y) / 2.
  \end{gather*}
  Therefore, in $\widehat{G}_L^G$,
  \begin{gather*}
    \mu \circ D_{[1,a]}(x,y)
    = \mu \circ d\bigl( [x,y], [1,a]\bigr)
    \geq d(xa,y) / 2.
  \end{gather*}
  Consequently,
  \begin{gather*}
    d(xa,y) = \inf_z \, \bigl( d(y,z) + 2 \mu \circ D_{[1,a]}(x,z) \bigr),
  \end{gather*}
  where the infimum is attained at $z = xa$.
  We conclude that the map $R_{a}$ is definable in $\widehat{G}_L^G$ for every $a \in \widehat{G}_L$.
\end{rmk}


\section{Locally $\aleph_0$-categorical theories}
\label{sec:LocalA0theories}

The main goal of the present paper is to extend some of the theory of $\aleph_0$-categorical structures to ones that are \emph{locally} so.
While we are quite confident in our definition of locally $\aleph_0$-categorical theories, general local theories are more subtle and several definitions are possible.
For the purposes of this paper, the following weak variant will suffice.

\begin{dfn}
  \label{dfn:WeaklyLocalTheory}
  Let $T$ be a theory and let $\sim$ be an equivalence relation on models of $T$.
  We say that $T$ is \emph{weakly local} relative to $\sim$, or that the pair $(T,\sim)$ is a \emph{weakly local theory}, if the following hold:
  \begin{itemize}
  \item If $M \vDash T$ and $M_0 \subseteq M$ is an equivalence class, then $M_0 \preceq M$.
  \item A map between models of $T$ is elementary if and only if it is elementary on each equivalence class and sends distinct equivalence classes into distinct ones.
  \end{itemize}
  The equivalence classes in $M$ are its \emph{local components}.
  If $M \vDash T$ has a single local component, then $M$ is a \emph{local model} of $T$.

  A \emph{localising metric} for a local theory $(T,{\sim})$ is a definable generalised metric $d$ such that $a \sim b$ if and only if $d(a,b) < \infty$.
\end{dfn}

By \autoref{thm:DefinableMetrisation}, if $(T, \sim)$ is a weakly local theory in a separable language and $\sim$ is open, then it  admits a localising metric, which is unique up to uniform and coarse equivalence.

\begin{dfn}
  \label{dfn:LocallyA0Categorical-prelim}
  A weakly local theory $(T, \sim)$ is \emph{locally $\aleph_0$-categorical} if it has a separable language and it admits a unique local separable model up to isomorphism.
\end{dfn}

\begin{dfn}
  \label{dfn:LocallyIsolated}
  Let $T$ be a complete theory, and let $p = \tp(a) \in \tS_n(T)$.
  For $i,j < n$, we write $i \sim_p j$ if $\tp(a_i,a_j) \in \tS_2(T)$ is isolated.
  Say that $p$ is \emph{locally isolated} if the following conditions hold:
  \begin{itemize}
  \item $\sim_p$ is an equivalence relation on $n$;
  \item for each equivalence class $I \subseteq n$, $p_I = \tp(a_I)$ is isolated, where $a_I = (a_i : i \in I)$; and
  \item if $q \in \tS_n(T)$ is such that $\sim_q$ coincides with $\sim_p$, and $q_I = p_I$ for each equivalence class $I$, then $q = p$.
  \end{itemize}
\end{dfn}

\begin{lem}
  \label{lem:LocallyIsolatedStep0}
  Assume that $T$ is complete and all types of $T$ are locally isolated.
  Then every $1$-type is isolated.
  Moreover, for every $1$-type $p$, there exists $\varepsilon > 0$ such that for every $a,b$ that realise $p$, if $d(a,b) < \varepsilon$, then $\tp(a,b)$ is isolated.
\end{lem}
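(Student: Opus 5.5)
The first assertion is immediate from \autoref{dfn:LocallyIsolated} applied with $n = 1$. If $p = \tp(a) \in \tS_1(T)$, then $\sim_p$ is a relation on the one-point set $\{0\}$. Since $p$ is locally isolated, $\sim_p$ is an equivalence relation, so its only class is $I = \{0\}$. The second clause of the definition then says that $p_I = p$ is isolated.

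For the moreover part, I would exploit the third clause of \autoref{dfn:LocallyIsolated}, which gives a uniqueness statement, rather than any topological argument.

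Fix a $1$-type $p$, and consider all $2$-types $q = \tp(a,b)$ with $a, b$ both realising $p$. Apply local isolation to such a $q$. Either $0 \sim_q 1$, which by definition means that $q$ itself is isolated. Or else $\sim_q$ is the identity relation on $\{0,1\}$, with classes $\{0\}$ and $\{1\}$ and restrictions $q_{\{0\}} = q_{\{1\}} = p$. By the third clause, any two types $q, q'$ of the second kind share the same $\sim$ relation and the same restrictions, hence coincide. So there is at most one non-isolated $2$-type $q_\infty$ whose two marginals are both $p$.

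If no such $q_\infty$ exists, any $\varepsilon > 0$ works. Otherwise, the type $q_\infty$ determines the value $r = d(x,y)$ on all of its realisations. I claim $r > 0$. If $r = 0$, then $q_\infty$ would be the type of a pair $(a,a)$ with $a \vDash p$. That type is isolated, since its realisations form the definable image of $p(M)$ under the diagonal map; equivalently, it is realised in every model because $p$ is, by the first part. This contradicts the choice of $q_\infty$.

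Now take $\varepsilon = r$. If $a, b$ realise $p$ and $d(a,b) < \varepsilon$, then $\tp(a,b) \neq q_\infty$, so $\tp(a,b)$ is isolated.

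The only step needing care is checking that the diagonal type $\tp(a,a)$ is isolated, so that $r \neq 0$. I would justify it as above, via \autoref{fct:IsolatedTypeDistance}: the distance to the diagonal type is $\max\bigl(d(x,p), d(x,y)\bigr)$ up to a factor, which is a definable predicate, so that type is isolated.
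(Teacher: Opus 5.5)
Your proof is correct and is essentially the paper's argument: local isolation leaves at most one non-isolated $2$-type with both marginals equal to $p$, and the diagonal type $\tp(a,a)$ is isolated, so that exceptional type has $d(x,y) = r > 0$ and $\varepsilon = r$ works (or $\min(r,1)$ if $r = \infty$). Of your two justifications for the isolation of $\tp(a,a)$, keep the one via the definable predicate $\max\bigl(d(x,p), d(x,y)\bigr)$, since the ``realised in every model'' route relies on the omitting types theorem, which needs a separable language, and this lemma does not assume one.
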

\begin{proof}
  A locally isolated $1$-type must be isolated by definition.
  Moreover, local isolation implies that for each $1$-type $p$, there exists at most one non-isolated $2$-type $q = \tp(a,b)$ such that $\tp(a) = \tp(b) = p$.
  Since $p$ is isolated, so is $\tp(a,a)$.
  Therefore, if such a type $q$ exists, then $a \neq b$, and we may choose $\varepsilon = d(a,b) > 0$; and if no such type exists, then any $\varepsilon > 0$ will do.
\end{proof}

\begin{lem}
  \label{lem:LocallyIsolatedStep1}
  Assume that $T$ is a complete theory in a separable language and that all types of $T$ are locally isolated.
  Let $q(x) \in \tS_n(T)$ be isolated, and let $Q \subseteq \tS_{n+1}(T)$ be the collection of types extending $q$.
  Then the set of isolated types in $Q$ is dense in $Q$.
\end{lem}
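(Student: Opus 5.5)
Fix $r \in Q$ and an open neighbourhood $W$ of $r$ in $\tS_{n+1}(T)$. The goal is to find an isolated type in $W \cap Q$. Write $r = \tp(a,b)$ with $a = (a_0,\ldots,a_{n-1}) \vDash q$, and let $p = \tp(b)$. The case $n = 0$ is covered by \autoref{lem:LocallyIsolatedStep0}, since then $r$ is a $1$-type and is therefore isolated. So assume $n \geq 1$.

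\emph{Step 1: the structure of $\sim_r$.} Since $q$ is isolated, each restriction $\tp(a_i,a_j)$ is isolated (restrictions of isolated types are isolated). Hence all indices $i,j < n$ are $\sim_r$-equivalent. Since $r$ is locally isolated, $\sim_r$ is an equivalence relation, which leaves two cases.
\begin{itemize}
\item If $b \sim_r a_i$ for some $i$, then $\sim_r$ has a single class. Local isolation then says $r$ itself is isolated, and we are done.
\item Otherwise $\{n\}$ is its own class and $\tp(a_i,b)$ is non-isolated for every $i$. By the third clause of \autoref{dfn:LocallyIsolated}, $r$ is then the \emph{unique} type $s \in Q$ whose last coordinate has type $p$ and which has this class structure.
\end{itemize}

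\emph{Step 2: the fibre $Q_p$.} Let $Q_p \subseteq Q$ be the closed set of completions of $q(x) \cup p(y)$. Any $s \in Q_p$ has $\sim_s$ either total or with $\{n\}$ isolated, by the argument of Step 1. If $\sim_s$ is total, $s$ is isolated; otherwise $s = r$. Thus $Q_p \subseteq \fI_{n+1}(T) \cup \{r\}$. If $r$ is an accumulation point of $Q_p$, then $W \cap Q_p$ contains some $s \neq r$, which is isolated, and we are done.

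\emph{Step 3: the main obstacle.} It remains to treat the case where $r$ is a topologically isolated point of $Q_p$, and to show that $r$ is then an isolated type in the sense of the paper, i.e.\ the logic and metric topologies agree at $r$. Choose an open $U \ni r$ with $\overline{U} \cap Q_p = \{r\}$. Since $q$ and $p$ are isolated, \autoref{fct:IsolatedTypeDistance} makes $d(x,q)$ and $d(y,p)$ definable predicates, computed as true distances to realisations in any model. Fix $\varepsilon > 0$ and take $\delta \le \varepsilon$ small enough that the set of realisations of $U$ is preserved under $\delta$-perturbation near realisations of $r$; this uses uniform continuity of the finitely many formulas defining a slightly smaller neighbourhood $U'$ with $\overline{U'} \subseteq U$. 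Let $V = U' \cap \{d(x,q) < \delta,\ d(y,p) < \delta\}$. Any $s \in V$, realised by $(a',b')$, can be moved by less than $\delta$ to some $(a'',b'')$ realising $q$ and $p$. The type of $(a'',b'')$ lies in $\overline{U} \cap Q_p = \{r\}$, so $s$ is within $\delta \le \varepsilon$ of $r$ in the type metric. Thus logic neighbourhoods of $r$ are contained in metric balls, so $r$ is isolated.

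The delicate point is the perturbation bookkeeping: the perturbed type must stay in $\overline{U}$. I would handle this by choosing $U'$ via a formula $\varphi$ with $r \in \{\varphi < 1/2\}$ and $\{\varphi \le 1\} \subseteq U$, and taking $\delta$ below the uniform continuity modulus of $\varphi$ for the gap $1/2$. Separability of the language is used only implicitly, to know that isolated types and $d(\cdot,q)$ behave as in \autoref{fct:IsolatedTypeDistance}.
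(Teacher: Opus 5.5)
Your proof is correct and follows essentially the same route as the paper's. Both arguments use the definable distances to the isolated restrictions to move nearby types metrically into the fibre of completions of $q(x) \cup p(y)$, and then invoke the third clause of local isolation to force every type in that fibre other than $r$ to be isolated. The difference is only in presentation: the paper argues with a sequence converging to the non-isolated type in the logic topology but not in the type metric, whereas your Step~3 proves the contrapositive with an explicit neighbourhood $V$.
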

\begin{proof}
  Let $U \subseteq \tS_{n+1}(T)$ be open, and assume that $p(x,y) \in U \cap Q$.
  Let $r(y) \in \tS_1(T)$ be the restriction of $p$ to the last variable.

  If $p$ is isolated, then we are done, so assume that it is not.
  Since the type space is metrisable, there exists a sequence of types $p_k(x,y) \in \tS_{n+1}(T)$ such that $p_k \rightarrow p$ in the logic topology but not in the type metric.
  Let $q_k(x)$ and $r_k(y)$ be their respective restrictions.
  Then $q_k \rightarrow q$ and $r_k \rightarrow r$ in the logic topology.
  The type $q$ is isolated by hypothesis, and $r$ is automatically isolated as a $1$-type, so $q_k \rightarrow q$ and $r_k \rightarrow r$ in the type metric as well.
  This means that (in a sufficiently saturated model) we may find realisations $(a_k,b_k) \vDash p_k$, as well as $a'_k \vDash q$ and $b'_k \vDash r$, such that $d(a_kb_k, a'_kb'_k) \rightarrow 0$.
  Let $p'_k = \tp(a'_k,b'_k)$.
  Then $d(p_k,p'_k) \rightarrow 0$, and since $p_k \rightarrow p$ in the logic topology but not in the metric, the same holds for the sequence $(p'_k)_k$.
  Therefore, for some large enough $k$, we have $p'_k \in U$ and $p'_k \neq p$.
  However, $p'_k$ and $p$ have the same restrictions to $x$ and to $y$, both of which are isolated.
  Since $p$ is non-isolated, by local isolation, $p'_k$ must be isolated (or else $p = p'_k$), showing that $U \cap Q$ contains an isolated type.
\end{proof}

\begin{lem}
  \label{lem:LocallyIsolatedStep2}
  Assume that $T$ is a complete theory in a separable language and that all types of $T$ are locally isolated.
  For $M \vDash T$ and $a,b \in M$, say that $a \sim b$ if $\tp(a,b)$ is isolated.
  Then $\sim$ is an invariant equivalence relation, and $T$ is weakly local relative to $\sim$.
\end{lem}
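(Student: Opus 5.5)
The plan is to verify the three requirements in turn: that $\sim$ is an equivalence relation, that each class is an elementary substructure, and that a map is elementary exactly when it is elementary on classes and separates classes. Throughout, I use that in a complete theory a type is isolated iff it is realised in every model; equivalently, in a given model $M$, $\tp(a,b)$ is isolated iff $M$ realises it in an elementary substructure, e.g. in the prime model.

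\emph{Equivalence relation.} Reflexivity is immediate: $\tp(a,a)$ is isolated because $\tp(a)$ is isolated by \autoref{lem:LocallyIsolatedStep0}. Symmetry is also immediate. For transitivity, apply local isolation to $p = \tp(a,b,c)$. By hypothesis $\sim_p$ is an equivalence relation on the three indices, and it relates indices $0,1$ and $1,2$. Hence it relates $0$ and $2$, so $\tp(a,c)$ is isolated. Invariance is clear, since the definition of $\sim$ depends only on $\tp(a,b)$.

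\emph{Classes are elementary.} Let $M_0$ be a $\sim$-class in $M$. I would check the Tarski--Vaught test. Take $a \in M_0^n$ and a formula $\varphi(x,y)$, and suppose $\inf_y \varphi(a,y) < r$ in $M$. We must find $b \in M_0$ with $\varphi(a,b) < r$. The type $q = \tp(a)$ is isolated, since $\sim_q$ is total and local isolation makes $q$ isolated. The set of $(n+1)$-types extending $q$ with $\varphi < r$ is open and nonempty. By \autoref{lem:LocallyIsolatedStep1} it contains an isolated type $p(x,y)$. Since $q$ is isolated and realised by $a$, the conditional isolation lets us realise $p$ over $a$ in $M$, up to arbitrarily small error: the distance to the realisations of $p$ is definable, so $\inf_y d(ay, p) = 0$ holds because it holds over realisations of $q$. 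Choosing the approximation close enough gives $b \in M$ with $\varphi(a,b) < r$ and $\tp(a,b)$ close to $p$ in the metric, where $p$ is isolated. It remains to see that $b \sim a_0$. Since $p$ is isolated, $p|_{x_0 y}$ is isolated. A small perturbation of a realisation preserves the relation $\sim$ by the moreover part of \autoref{lem:LocallyIsolatedStep0}, combined with transitivity. So $b \in M_0$. (If $n = 0$ the class is already nonempty, so we may take $a$ to contain one element of $M_0$.)

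\emph{Elementary maps.} An elementary map preserves types, so it preserves and reflects $\sim$; it is therefore elementary on classes and separates distinct classes. Conversely, suppose $f$ preserves $\sim$ and non-$\sim$, and is elementary on each class. Take a tuple $a$ and put $p = \tp(a)$, $q = \tp(f a)$. Then $\sim_q = \sim_p$, and for each class $I$ we have $q_I = p_I$, because elementarity on a class preserves the types of tuples from that class. The uniqueness clause of local isolation then gives $q = p$.

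The main obstacle is the elementary substructure step, specifically controlling approximate realisations so that the witness lands in the same class. If that step fails, I would instead build the witness by a Cauchy sequence of exact approximations inside $M$, using completeness.
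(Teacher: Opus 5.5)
Your overall route is the paper's. You get transitivity from local isolation and check the classes with the Tarski--Vaught test: \autoref{lem:LocallyIsolatedStep1} gives an isolated $p$ extending $q = \tp(a)$, you realise $p$ near $a$ in $M$, and then use \autoref{lem:LocallyIsolatedStep0} plus transitivity to land in the class. You handle the elementary-map clause through the uniqueness part of local isolation, and that part is fine (more explicit than the paper). The problem is the choice of witness in the elementarity step.

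From $\inf_y d(ay,p) = 0$ you obtain $b \in M$ together with an actual realisation $(a',b') \vDash p$ in $M$ with $d(ab,a'b') < \varepsilon$. You then try to show that $b$ itself lies in $M_0$. Two links are fine: $a_0 \sim a'_0$ follows from the moreover part of \autoref{lem:LocallyIsolatedStep0}, since both realise the same $1$-type, and $a'_0 \sim b'$ holds because $p$ is isolated. To reach $b$ you would also need $b \sim b'$, and \autoref{lem:LocallyIsolatedStep0} does not give this. $\tp(b)$ need not equal $\tp(b') = p|_y$, and the lemma only compares two nearby realisations of one fixed $1$-type. What you are implicitly using is that a type metrically close to an isolated type is isolated. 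In this setting that only follows from \autoref{lem:LocallyIsolatedStep3}, whose proof uses the present lemma, so the argument would be circular.

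The repair is to discard $b$ and take $b'$ as the witness, i.e.\ realise $p$ exactly and perturb only the $x$-part. The paper writes this as $\inf_{(x,y) \vDash p} d(a,x) = 0$. Since $\varphi(a',b') = \varphi(p) < r$ and $d(a,a') < \varepsilon$, uniform continuity gives $\varphi(a,b') < r$ once $\varepsilon$ is chosen small relative to $r - \varphi(p)$ and to the bound from \autoref{lem:LocallyIsolatedStep0} for $\tp(a_0)$. Then $a_0 \sim a'_0 \sim b'$, so $b' \in M_0$.

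You also omit that $M_0$ is metrically closed in $M$, which follows because the set of isolated types is closed in the type metric. This is needed before the continuous Tarski--Vaught test can be applied to $M_0$. Your fallback of building the witness by a Cauchy sequence is unnecessary once you use the exact realisation.
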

\begin{proof}
  Clearly, $\sim$ is an invariant, symmetric relation.
  Reflexivity follows from \autoref{lem:LocallyIsolatedStep0}.
  Transitivity follows from the local isolation of types.

  Assume now that $M \vDash T$, and $N \subseteq M$ is an equivalence class.
  Since isolated types form a metrically closed set, $N$ is a closed subset of $M$.
  Therefore, in order to see that $N \preceq M$, all we need to show is that if $a \in N^x$, $b \in M$ and $\varphi(x, y)$ is a formula with $y$ a singleton such that $\varphi(a,b) > 0$, then there exists $b' \in N$ such that $\varphi(a,b') > 0$.
  By local isolation, every type realised in $N$ is isolated, and in particular, so is $q = \tp(a)$.
  By \autoref{lem:LocallyIsolatedStep1}, there an isolated type $p \in \tS_{x,y}(T)$ such that $\varphi(p) > 0$ and whose restriction to $x$ is $q$.
  Since $p$ is isolated, we may quantify over the set of its realisations, and since its restriction to $x$ is $q$, we have:
  \begin{gather*}
    M \vDash \inf_{(x,y) \vDash p} d(a,x) = 0.
  \end{gather*}
  In other words, there exist realisations $(a_m,b_m) \in M^{x,y}$ of $p$ such that $a_m \rightarrow a$.
  In particular, $\varphi(a_m,b_m) = \varphi(p) > 0$.
  We may assume that $a$ is non-empty.
  Denote by $c$ the first member of $a$, and by $c_m$ the first member of $a_m$.
  If $d(a,a_m)$ is small enough, then, first, $\varphi(a,b_m) > 0$, and second, $\tp(c,c_m)$ is isolated by \autoref{lem:LocallyIsolatedStep0}.
  By local isolation, $\tp(a,a_m,b_m)$ is isolated, so $b_m \in N$.
  This concludes the proof that $N \preceq M$.

  The condition regarding elementary maps now follows immediately from local isolation, so $(T,\sim)$ is indeed weakly local.
\end{proof}

\begin{lem}
  \label{lem:LocallyIsolatedStep3}
  Assume that $T$ is a complete theory in a separable language and that all types of $T$ are locally isolated.
  Then for every $n$, the set of isolated $n$-types is open in $\tS_n(T)$.
  In particular, the joint isolation relation $\sim$ of \autoref{lem:LocallyIsolatedStep2} is open.
\end{lem}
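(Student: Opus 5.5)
The plan is to reduce openness of isolated $n$-types to a statement about pairs, and then to a uniform local statement about $1$-types, using local isolation and \autoref{lem:LocallyIsolatedStep0}.

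\emph{Step 1 (reduction to pairs).} Let $p \in \tS_n(T)$ be isolated. For a type $q \in \tS_n(T)$, if $\sim_q$ is the full relation on $n$, then the only equivalence class is $n$ itself. Local isolation of $q$ then forces $q = q_n$ to be isolated. Moreover, the set of $q$ with $q|_{\{i,j\}}$ in a given set $W_{ij} \subseteq \tS_2(T)$ is open when $W_{ij}$ is open. It therefore suffices to show the following: every isolated $2$-type $p_{ij} = p|_{\{i,j\}}$ has a logic-open neighbourhood consisting of isolated $2$-types. So I reduce to $n = 2$. The final clause of the statement is then immediate, since the relation $\sim$ of \autoref{lem:LocallyIsolatedStep2} is exactly the set of isolated $2$-types.

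\emph{Step 2 (case $n=2$).} Let $p(x_0,x_1)$ be isolated, realised by $(a_0,a_1)$. Since $p$ is isolated, the logic and metric topologies agree at $p$. Hence for every $\delta > 0$ there is a logic-open $U \ni p$ such that every $q \in U$ has $d(q,p) < \delta$. So if $(b_0,b_1) \vDash q$, then in a saturated model we may take $(a_0,a_1) \vDash p$ with $d(a_i,b_i) < \delta$. Since $a_0 \sim a_1$, transitivity of $\sim$ (\autoref{lem:LocallyIsolatedStep2}) shows $b_0 \sim b_1$ as soon as $a_i \sim b_i$ for $i = 0,1$. Therefore everything reduces to the following uniform version of \autoref{lem:LocallyIsolatedStep0}:
\begin{gather*}
  \text{for each isolated } r \in \tS_1(T) \text{ there is } \delta > 0 \text{ such that } a \vDash r,\ d(a,b) < \delta \implies a \sim b,
\end{gather*}
where $b$ is now allowed to have a type different from $r$.

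\emph{Step 3 (the main obstacle).} \autoref{lem:LocallyIsolatedStep0} only handles pairs realising the same $1$-type, so this step is where I expect the difficulty. I would argue by contradiction. Suppose $a_k \vDash r$, $d(a_k,b_k) \to 0$, and $\tp(a_k,b_k)$ is non-isolated. Since $r$ is isolated, \autoref{fct:IsolatedTypeDistance} lets us move $b_k$ slightly to some $c_k \vDash \tp(b_k)$, or alternatively pick $a'_k \vDash r$ with $d(a'_k,b_k) \to 0$, in order to compare with same-type pairs. Local isolation says that for each pair of $1$-types $(r, r_k)$ there is at most one non-isolated $2$-type extending them. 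Combined with the density argument of \autoref{lem:LocallyIsolatedStep1}, this should let me replace $\tp(a_k,b_k)$ by an isolated $2$-type with the same restrictions and metrically close to it. That would yield two distinct completions, isolated and non-isolated, of $(r,r_k)$ that are metrically very close.

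Passing to a logic-limit of the sequence $\tp(a_k,b_k)$, one gets a type whose restrictions are both $r$, with distance $0$ between the coordinates, i.e.\ a point of $\Delta$, which is isolated. I would then try to contradict the uniqueness of the non-isolated completion, together with the $\varepsilon$ of \autoref{lem:LocallyIsolatedStep0} for $r$, via a compactness argument on $\tS_2(T)$. Making this limiting argument precise is the step I expect to need the most care.
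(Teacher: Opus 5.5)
\textbf{There is a genuine gap: Step~3 is not proved, and it carries all of the lemma's content.}

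Your Steps~1 and~2 are a correct reduction. The logic and metric topologies agree at an isolated $p$, and $\sim$ is an equivalence relation by \autoref{lem:LocallyIsolatedStep2}, so openness of the isolated types does follow from the uniform claim you isolate. That claim is in fact equivalent to the lemma. The trouble is that the route you sketch for it does not lead anywhere:
\begin{itemize}
\item The logic limit of $\tp(a_k,b_k)$ is the diagonal type $\tp(a,a)$ with $a \vDash r$, which is isolated. That non-isolated types cannot accumulate at an isolated type is exactly what has to be shown, so ``a compactness argument on $\tS_2(T)$'' at that point is circular.
\item The uniqueness clause of local isolation concerns completions of the pair $(r,r_k)$, and this pair changes with $k$.
\item \autoref{lem:LocallyIsolatedStep1} gives isolated completions that are close in the logic topology, not in the type metric.
\item Even a metrically close isolated completion would not help. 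Comparing second coordinates would need the $\varepsilon$ of \autoref{lem:LocallyIsolatedStep0} for $r_k$, which is not uniform in $k$.
\item Your alternative, choosing $a'_k \vDash r$ with $d(a'_k,b_k) \to 0$, is vacuous as stated, since $a'_k = a_k$ already does this.
\end{itemize}

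\textbf{The missing idea is to choose the comparison point inside the local component of $b$.} Set $\delta = \varepsilon_r/2$, with $\varepsilon_r$ as in \autoref{lem:LocallyIsolatedStep0}, and let $a \vDash r$ and $d(a,b) < \delta$ in some $M \vDash T$.
\begin{itemize}
\item By \autoref{lem:LocallyIsolatedStep2}, the $\sim$-class $N$ of $b$ satisfies $N \preceq M$.
\item Since $r$ is isolated, \autoref{fct:IsolatedTypeDistance} applied in $N$ gives $d\bigl(b, r(N)\bigr) = d\bigl(\tp(b), r\bigr) \leq d(a,b) < \delta$.
\item So there is $a' \in N$ with $a' \vDash r$ and $d(a',b) < \delta$. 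Then $d(a,a') < \varepsilon_r$ gives $a \sim a'$, and $a' \sim b$ because $a' \in N$. Hence $a \sim b$.
\end{itemize}
No contradiction or limit argument is needed.

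With this fix, your proof becomes a valid alternative to the paper's. The paper instead shows directly that the non-isolated $2$-types form a closed set. Given non-isolated $p_k \to p$, it realises them along convergent sequences $a_k \to a$ and $b_k \to b$ lying in two distinct local components of a non-local model. It then uses the uniqueness clause of local isolation to get $\tp(a_k,b_k) = p_k$, and concludes that $p = \tp(a,b)$ is non-isolated.
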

\begin{proof}
  Assume that $p_k \rightarrow p$ in $\tS_2(T)$, where $p_k$ are non-isolated.
  Let $q(x)$ and $r(y)$ be the restrictions of $p(x,y)$ to the first and second variables, respectively, and similarly for $q_k$ and $r_k$.
  Then $q_k \rightarrow q$ and $r_k \rightarrow r$.
  Since $q$ and $r$ are $1$-types, they are isolated, so $d(q_k,q) + d(r_k,r) \rightarrow 0$.
  By passing to a subsequence, we may assume that $d(q_k,q) + d(r_k,r) < 2^{-k-2}$.
  In particular, $d(q_k,q_{k+1}) + d(r_k,r_{k+1}) < 2^{-k}$.

  By \autoref{lem:LocallyIsolatedStep2}, $T$ is a weakly local theory.
  We assume that $T$ admits non-isolated types, so it must admit a non-local model $M \vDash T$.
  Let $M'$ be any local component of $M$.
  Then $M'$ is a model of $T$, and since the types $q_k$ are isolated, we can construct a sequence $(a_k)_k$ in $M'$, such that $a_k \vDash q_k$ and $d(a_k,a_{k+1}) < 2^{-k}$.
  Let $a = \lim_k a_k \in M'$.
  Similarly, in another local component, construct a sequence $(b_k)_k$ such that $b_k \vDash r_k$ and $b_k \rightarrow b$.
  By local isolation, $\tp(a_k,b_k) = p_k$, so $p = \tp(a,b)$.
  Since $a$ and $b$ belong to distinct local components, $p$ is non-isolated.

  This proves that the set of isolated types in $\tS_2(T)$ is open.
  A type in $\tS_n(T)$ is isolated if and only if all its restrictions to $2$-types are isolated, by local isolation, so the isolated types form an open set in $\tS_n(T)$ as well.
\end{proof}

\begin{thm}[Local Ryll-Nardzewski]
  \label{thm:LocalRyllNardzewski}
  Let $T$ be a theory in a separable language, and let $\sim$ be an invariant equivalence relation on models of $T$.
  Then the following are equivalent:
  \begin{enumerate}
  \item The theory $(T, \sim)$ is locally $\aleph_0$-categorical.
  \item The theory $T$ is complete, the types of $T$ are locally isolated, and $a \sim b$ if and only if $\tp(a,b)$ is isolated.
  \end{enumerate}
  If these equivalent conditions hold, then, in addition:
  \begin{itemize}
  \item The isolated types in $\tS_n(T)$ form a dense open set for each $n$.
  \item A model of $T$ is local if and only if it is atomic; in particular, the unique local separable model of $T$ is its prime model.
  \item The isomorphism type of any model of $T$ is determined by the isomorphism types of its local components and their multiplicity.
  \end{itemize}
\end{thm}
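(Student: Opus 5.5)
Both directions pass through the prime model. For (ii)$\Rightarrow$(i) we use \autoref{lem:LocallyIsolatedStep2}; for (i)$\Rightarrow$(ii) we first show that a separable local model is atomic. The additional items then follow along the way.

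\textbf{(ii)$\Rightarrow$(i).} Assume $T$ is complete, all types are locally isolated, and $\sim$ is joint isolation. By \autoref{lem:LocallyIsolatedStep2}, $(T,\sim)$ is weakly local.

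Isolated types are dense. Given an $n$-type $p$, apply \autoref{lem:LocallyIsolatedStep1} inductively, one variable at a time: an isolated $k$-type has isolated $(k+1)$-extensions arbitrarily close to any given extension, so we obtain isolated $n$-types approximating $p$. Hence a prime model $M_0$ exists. It is atomic, so any two of its elements have isolated joint type, and $M_0$ is local.

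Conversely, let $N$ be a separable local model. For any tuple $a$ in $N$, all pairs $a_i,a_j$ have isolated type, so $\sim_{\tp(a)}$ is trivial and $\tp(a)$ is isolated by local isolation. Thus $N$ is atomic, hence prime, hence isomorphic to $M_0$.

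This also gives the second bullet: local models are atomic, and atomic models are local since isolated types have isolated $2$-restrictions. Openness of the isolated types is \autoref{lem:LocallyIsolatedStep3}.

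For the third bullet, each local component is an atomic model, i.e.\ a local model. A bijection between the component sets, preserving isomorphism type, together with isomorphisms on the components, gives a map that is elementary by weak locality.

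\textbf{(i)$\Rightarrow$(ii).} Let $M$ be the unique separable local model.

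\emph{Completeness.} Each model $N$ has a local component $N_0\preceq N$, and by downward Löwenheim--Skolem inside $N_0$ (closing off under $\sim$ is automatic, since we stay in $N_0$) we get a separable $N_1\preceq N_0$. Then $N_1$ is a local separable model: its elements are pairwise $\sim$ because $\sim$ is invariant, i.e.\ given by types. So $N\equiv N_1\cong M$, and $T$ is complete.

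\emph{$M$ is prime.} Any separable $N\vDash T$ has a separable local elementary submodel isomorphic to $M$, so $M$ embeds elementarily in $N$. Hence $M$ is prime, thus atomic.

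\emph{$a\sim b$ iff $\tp(a,b)$ is isolated.} If $a\sim b$, take a separable elementary submodel of the component containing $a,b$. It is $\cong M$, so $\tp(a,b)$ is realised in $M$ and therefore isolated. If $a\not\sim b$ and $\tp(a,b)$ were isolated, it would be realised in $M$ by some $a'\sim b'$, contradicting invariance of $\sim$.

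\emph{Local isolation.} For $p=\tp(a)$, $\sim_p$ is exactly $\sim$ restricted to the coordinates, hence an equivalence relation. Each $p_I$ is realised inside a local component, hence in a copy of $M$, hence isolated. Finally, suppose $q$ has the same $\sim$-pattern and the same restrictions $q_I=p_I$. Realise $p$ and $q$ in a large model; within each component, the tuples realising $p_I$ and $q_I$ have the same type. Choose separable local elementary submodels containing them; by homogeneity/uniqueness of $M$, there are isomorphisms between these carrying the $p_I$-tuples to the $q_I$-tuples (approximately, via homogeneity of $M$). The union of these maps sends distinct classes to distinct classes, so it is elementary by weak locality, and $p=q$.

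\textbf{Main obstacle.} The delicate point is the isomorphism in the last step. Homogeneity of the prime model only gives automorphisms moving $a_I$ arbitrarily close to $b_I$, not exactly onto it. We handle this by approximation: the resulting elementary maps show that $d(p,q)$ is arbitrarily small in the type metric, so $p=q$. Alternatively, one can run a back-and-forth argument in $M$ fixing the tuple. A related care point is using that the union of elementary maps on distinct components is elementary, which is precisely the second clause of \autoref{dfn:WeaklyLocalTheory}.
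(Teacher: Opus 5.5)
Your (ii)$\Rightarrow$(i) direction and the bullets hold up. So do completeness, primeness of $M$, and the identification of $\sim$ with joint isolation in (i)$\Rightarrow$(ii). There is, however, a genuine gap in the last clause of local isolation.

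\textbf{The gap.} For each class $I$ you choose, independently, a separable local elementary submodel $A_I$ of the component containing $a_I$. You then invoke the second clause of \autoref{dfn:WeaklyLocalTheory} for $\bigcup_I f_I$. That clause only concerns maps between \emph{models} of $T$. Nothing in the definition of a weakly local theory says that $\bigsqcup_I A_I$, a union of submodels of some of the components of your big model, is a model, let alone an elementary submodel. By Tarski--Vaught, proving this would mean approximating witnesses that lie in other components by elements of the chosen ones, which is not available at this stage. Relatedly, your construction never deals with components of the domain that contain no coordinate of $a$. The criterion requires every component of the domain model to go elementarily into its own distinct component. You also locate the main obstacle in exact versus approximate matching of $a_I$ with $b_I$, but that part is harmless.

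\textbf{The fix} is what the paper does: start from a genuine model. Realise $p$ by $a$ in a separable model $M'$, and $q$ by $b$ in a sufficiently saturated $N$, which then has at least as many local components as $M'$. Every local component $M'_\alpha$ of $M'$ is separable. If $M'_\alpha$ contains $a_I$ for a class $I$, saturation gives an elementary embedding $M'_\alpha \to N$ sending $a_I$ exactly to $b_I$; your homogeneity approximation would also work once the domain is fixed. If $M'_\alpha$ contains no coordinate of $a$, embed it into an unused component of $N$. The union is then a map between models meeting the weak locality criterion, hence elementary, and so $p = q$.

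\textbf{A minor point.} In (ii)$\Rightarrow$(i), iterating \autoref{lem:LocallyIsolatedStep1} needs openness of the restriction maps $\tS_{k+1}(T) \to \tS_k(T)$. The isolated $q_k$ you obtain at step $k$ is not $p$ restricted to the first $k$ variables. So you must first find an extension of $q_k$ inside the target neighbourhood, which openness provides. The paper avoids this step entirely. By weak locality, any local component of a separable model is a separable local model; it is atomic by local isolation, hence prime, and density of isolated types follows.
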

\begin{proof}
  Assume first that $(T,\sim)$ is locally $\aleph_0$-categorical, and let $M_0$ denote the unique local separable model of $T$.
  Every model of $T$ admits a separable submodel, and every separable model of $T$ admits at least one local component, isomorphic to $M_0$.
  Therefore, $T$ is complete and $M_0$ is its prime model.
  In addition, every $1$-type is realised in some local component of a separable model.
  Therefore, all $1$-types are isolated.

  Let us consider a $2$-type $p$.
  We may realise $p$ in a separable model by a pair $(a,b)$.
  If $p$ is isolated, then $p$ is realised in $M_0$, so $a \sim b$.
  Conversely, if $p$ is not isolated, then it cannot be realised in the local component of $a$, so $a \nsim b$.
  We conclude that $a \sim b$ if and only if $\tp(a,b)$ is isolated.

  Consider now any type $p \in \tS_n(T)$, realised by a tuple $a$ in some separable model $M$.
  Given our characterisation of $\sim$, it is immediate that the relation $\sim_p$ of \autoref{dfn:LocallyIsolated} is an equivalence relation.
  Assume now that $q \in \tS_n(T)$ is another type with the same relation $\sim_q$ and the same restrictions $p_I = q_I$ to the equivalence classes.
  If $n = 0$, then $q = p$ since $T$ is complete, so we may assume that $n > 0$.
  We may realise $q$ in a sufficiently saturated model $N \vDash T$, say by $b$.
  In particular, $N$ has at least as many local components as $M$.
  Let $M_\alpha \preceq M$ be a local component of $M$, and let $I = I_\alpha = \{i < n : a_i \in M_\alpha\}$.
  If $I \neq \emptyset$, then it is an equivalence class of $\sim_p$, and there exists an elementary embedding $f_\alpha\colon M_\alpha \rightarrow N$ that sends $a_I \mapsto b_I$.
  If $I = \emptyset$, then we can choose an elementary embedding $f_\alpha\colon M_\alpha \rightarrow N$ into an unused local component of $N$.
  The union $f = \bigcup_\alpha f_\alpha$ sends $a \mapsto b$.
  It is moreover elementary on each local component and sends distinct local components to distinct ones, so it is elementary.
  Therefore, $p = q$, proving that the types of $T$ are indeed locally isolated.

  For the converse implication, it follows from \autoref{lem:LocallyIsolatedStep2} that $(T,{\sim})$ is weakly local.
  It also follows that every local model is atomic, so local $\aleph_0$-categoricity follows from the uniqueness of the separable atomic model.

  The set of isolated types is open by \autoref{lem:LocallyIsolatedStep3} and it is dense because the theory admits a prime model.
The other two items in the second part of the theorem are clear.
\end{proof}

\begin{dfn}
  \label{dfn:LocallyA0Categorical}
  A theory is \emph{locally $\aleph_0$-categorical} if it is locally $\aleph_0$-categorical relative to the only possible locality relation (according to \autoref{thm:LocalRyllNardzewski}), namely,
  \begin{gather*}
    a \sim b
    \quad \Longleftrightarrow \quad
    \tp(a,b) \text{ is isolated}.
  \end{gather*}

  A structure $M$ is \emph{locally $\aleph_0$-categorical} if $\Th(M)$ is locally $\aleph_0$-categorical and $M$ is its unique separable local model.
\end{dfn}

It follows from \autoref{thm:DefinableMetrisation} that every locally $\aleph_0$-categorical theory, or structure, admits a localising metric $d$ in the sense of \autoref{dfn:WeaklyLocalTheory}.
This is a generalised definable metric such that for any $n$-tuple $a$, $\tp(a)$ is isolated if and only if $d(a_i,a_j) < \infty$ for all $i,j < n$.
In particular, it is finite-valued (i.e., a metric) on every locally $\aleph_0$-categorical structure.

If $M$ is an $\aleph_0$-categorical structure, then it is, in particular, locally so.
In addition, every type of $T = \Th(M)$ is realised in $M$ and every model of $T$ is local.
It follows that every definable metric on $M$ is localising and bounded.

\begin{prp}
  \label{prp:LocallyA0CategoricalLocalisingUnbounded}
  Let $T$ be a locally $\aleph_0$-categorical, non-$\aleph_0$-categorical theory and let $M$ be its unique separable local model.
  Let $d$ be a definable metric on $M$.
  Then $d$ localising if and only if it is unbounded.
\end{prp}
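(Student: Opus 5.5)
The plan is to prove the two implications separately. Throughout, $T = \Th(M)$, $d(x,y)$ is the $[0,\infty]$-valued formula defining the metric, and $\sim$ is joint isolation as in \autoref{dfn:LocallyA0Categorical}. By \autoref{thm:LocalRyllNardzewski}, all types of $T$ are locally isolated and all $1$-types are isolated; in particular $M \dslash G \simeq \tS_1(T)$ is compact by \autoref{fct:IsolatedTypeDistance}.

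\emph{Localising implies unbounded.} First I show that some $2$-type is not isolated. Otherwise, by local isolation every $n$-type is isolated (its $\sim_p$ has a single class), and then $T$ is $\aleph_0$-categorical by the classical Ryll-Nardzewski theorem, contrary to assumption. So pick a non-isolated $p \in \tS_2(T)$. Since $d$ is localising, $d(p) = \infty$. For each $r$, the condition $d(x,y) > r$ defines an open neighbourhood of $p$. Isolated types are dense by \autoref{thm:LocalRyllNardzewski}, so this neighbourhood contains an isolated type. That type is realised in the prime model $M$, giving points of $M$ at distance $> r$. Hence $d$ is unbounded on $M$.

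\emph{Unbounded implies localising.} If $\tp(a,b)$ is isolated, it is realised in $M$, so $d(a,b) < \infty$. It remains to show that $d = \infty$ on every non-isolated $2$-type. Let $P \subseteq \tS_2(T)$ be the set of non-isolated $2$-types. It is closed by \autoref{lem:LocallyIsolatedStep3}. By local isolation, for each pair $(q_1,q_2) \in \tS_1(T)^2$ there is exactly one $p_{q_1,q_2} \in P$ with these restrictions; existence comes from realising $q_1,q_2$ in two distinct local components of a non-local model, which exists as in \autoref{lem:LocallyIsolatedStep3}. Hence the restriction map $P \to \tS_1(T)^2$ is a continuous bijection of compact Hausdorff spaces, so a homeomorphism. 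Define $f(q_1,q_2) = d(p_{q_1,q_2})$; it is continuous into $[0,\infty]$.

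Now suppose $f(q_1,q_2) = r < \infty$ for some pair. Work in a model $N$ with two local components $N_1, N_2$. Each is a model of $T$ and hence realises every $1$-type. Take $a \in N_1$ and $b \in N_2$ with $\tp(a,b) = p_{q_1,q_2}$. For any $a' \in N_1$ and $b' \in N_2$, the triangle inequality gives
\begin{gather*}
  d(a',b') \leq d(a',a) + r + d(b,b') < \infty,
\end{gather*}
since $d$ is finite within a component. As every pair of $1$-types is realised across $N_1 \times N_2$, $f$ is finite everywhere. A continuous finite function on the compact space $\tS_1(T)^2$ is bounded, say by $B$. Fix $b \in N_2$. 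For any $a,c \in N_1 \cong M$ we get $d(a,c) \leq d(a,b) + d(b,c) \leq 2B$. So $d$ is bounded on $M$, a contradiction. Hence $d(p) = \infty$ for every $p \in P$, which means $d$ is localising.

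The main obstacle is the last step: passing from "finite at one cross pair" to a uniform bound. This relies on the continuity of $(q_1,q_2) \mapsto p_{q_1,q_2}$, obtained from closedness of $P$ together with uniqueness from local isolation, and on compactness of $\tS_1(T)$. I would double-check that the closed-graph/compactness argument gives continuity of this map and not merely its existence.
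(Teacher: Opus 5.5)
Your proof is correct. It uses the same core mechanism as the paper: by local isolation, a non-isolated $2$-type is determined by its two $1$-type restrictions, and the triangle inequality, applied inside local components where $d$ is finite, transfers the behaviour of $d$ from one cross-component pair to any other.

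The difference is where compactness enters. The paper argues directly. Since $d$ is unbounded on $M$, a limit of types realised in $M$ gives some $p_0 \in \tS_2(T)$ with $d(x,y)^{p_0} = \infty$. Using that $\tS_1(T)$ has finite diameter, it moves $p_0$ a finite distance to a type $p_1$ with the prescribed restrictions $q,r$. The triangle inequality keeps $d^{p_1} = \infty$, so $p_1$ is non-isolated and equals $p$ by local isolation. You argue contrapositively and apply compactness to the set $P$ of non-isolated $2$-types instead. This costs you the extra input that $P$ is closed (\autoref{lem:LocallyIsolatedStep3}), plus the final step bounding $d$ on $M$ through an auxiliary point in a second component. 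What you gain is the explicit description $P \cong \tS_1(T)^2$, which the paper does not need.

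On the worry you flagged: a continuous bijection from the compact space $P$ onto the Hausdorff space $\tS_1(T)^2$ is indeed a homeomorphism. You do not actually need it, though. The restriction of $d$ to $P$ is a continuous $[0,\infty]$-valued function on a compact space, so once it is finite everywhere on $P$ it is bounded.

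Two small points. First, using the triangle inequality in the non-local model $N$ requires it to hold in every model of $T$. This is true, since it is a closed condition on $\tS_3(T)$ that holds on the dense set of isolated types; the paper's argument uses the same fact tacitly. Second, $N_1 \cong M$ requires $N$ to be separable. It suffices instead that $M$, being prime, embeds elementarily in $N_1$.
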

\begin{proof}
  Since $T$ is not $\aleph_0$-categorical, it admits a non-isolated type, so a localising metric must be unbounded.
  For the opposite implication, assume that $d$ is unbounded, and let $p(x,y) \in \tS_2(T)$.

  If $p$ is isolated, then it is realised in $M$, so $d(x,y)^p < \infty$.
  Conversely, assume that $p$ is non-isolated, and let $q(x)$ and $r(y)$ denote its restrictions to each variable.
  By compactness, there exists a type $p_0 \in \tS_2(T)$ such that $d(x,y)^{p_0} = \infty$.
  Since $\tS_1(T)$ has finite diameter, there exists $p_1 \in \tS_2(T)$ that extends $q(x)$ and $r(y)$, such that $d(p_1,p_0) < \infty$.
  Then $d(x,y)^{p_1} = \infty$, so $p_1$ is not realised in $M$, and is non-isolated.
  By local isolation of types, $p_1 = p$, so $d(x,y)^p = \infty$.
  Therefore, $d$ is localising.
\end{proof}

We extend a localising metric to $n$-tuples by taking the maximum:
\begin{gather*}
  d(a,b) = \max_{i<n} d(a_i,b_i).
\end{gather*}
We may then equip the type spaces $\tS_n(T)$ with the corresponding generalised metric structure.

\begin{cor}
  \label{cor:LocallyA0CategoricalIsolatedProper}
  Let $T$ be a locally $\aleph_0$-categorical theory.
  Let $\tS_n(T)$ be equipped with the generalised metric structure induced by a localising metric $d$, and let $\fI_n(T) \subseteq \tS_n(T)$ denote the collection of isolated types.

  Then $\bigl( \fI_n(T), d \bigr)$ is a proper metric space for all $n$.
\end{cor}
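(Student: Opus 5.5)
We must show that every closed bounded subset of $\fI_n(T)$ is compact. Note first that on $\fI_n(T)$ the generalised metric is finite: if $p, q$ are isolated, realise both in the prime model $M$. By \autoref{fct:IsolatedTypeDistance}, the type distance between them is the distance between their realisation sets in $M$, which is finite because $d$ is finite-valued on $M$. More precisely, $d$ here denotes the type metric induced by the localising metric, i.e.\ $d(p,q) = \inf\{d(a,b) : a \vDash p, b \vDash q\}$ in a saturated model. The plan is to take a bounded sequence $(p_k)_k$ in $\fI_n(T)$ and produce a subsequence that is Cauchy in $d$, with limit in $\fI_n(T)$.

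\textbf{Step 1: logic-topology limit, and it is isolated.} Fix a bounded set $B \subseteq \fI_n(T)$, say of diameter at most $R$, and $p_0 \in B$ realised by $a_0 \in M^n$. Every $p \in B$ is realised in $M$ (isolated types are realised in the prime model) by some $a$ with $d(a, a_0) \leq R+1$, after moving by an automorphism using \autoref{fct:IsolatedTypeDistance}. Hence $\tp(a_0, a)$ lies in the set $C = \{r \in \tS_{2n}(T) : r|_{\text{first } n} = p_0,\ \max_{i,j} d(x_i,y_j)^r \leq R'\}$ for a suitable finite $R'$. This set is closed, hence compact, in the logic topology. Every $r \in C$ has all pairwise distances finite, so all its $2$-subtypes are isolated by the localising property; by local isolation (\autoref{thm:LocalRyllNardzewski}), $r$ is isolated. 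Thus $C \subseteq \fI_{2n}(T)$ is compact in the logic topology.

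\textbf{Step 2: upgrade to metric compactness.} This is the main obstacle: logic-topology convergence must be turned into $d$-convergence. Isolated types are exactly the points where the two topologies agree, so at each point of $C$ the logic neighbourhoods are small in the type metric $d_{\mathrm{tp}}$ (the standard type metric, uniformly equivalent to the one from $d$ near zero by \autoref{thm:DefinableMetrisation}). A compact set in the logic topology, all of whose points are isolated, is therefore compact in the metric topology. The argument is the usual one: given $\varepsilon>0$, cover $C$ by logic-open sets of $d_{\mathrm{tp}}$-diameter $<\varepsilon$, then extract a finite subcover, which shows $C$ is totally bounded. A logic-convergent sequence of isolated types with isolated limit then converges metrically. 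So any sequence in $C$ has a $d_{\mathrm{tp}}$-convergent subsequence with limit in $C$.

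\textbf{Step 3: project.} Restriction to the last $n$ variables, $C \to \tS_n(T)$, is $1$-Lipschitz for the type metrics and lands in $\fI_n(T)$. Its image therefore contains $B$ and is compact in the metric topology. Since the localising $d$ restricted to finite values is uniformly equivalent near zero to the usual metric, compactness transfers to $(\fI_n(T), d)$. A closed bounded subset of $\fI_n(T)$ lies in such a compact image, so it is compact, and $\fI_n(T)$ is proper.
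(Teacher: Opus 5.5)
Your proof is correct and is essentially the paper's argument. Both proofs rest on two facts: staying at bounded distance from a fixed isolated type $p_0$ is a logic-closed condition that forces isolation, and the logic and metric topologies agree at isolated types.

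The paper applies this directly to a logic-convergent subsequence, using lower semicontinuity of the type metric. You make that semicontinuity explicit by lifting to the logic-compact set $C \subseteq \fI_{2n}(T)$ and projecting. One small correction: the near-zero uniform equivalence of $d$ with the standard metric does not literally follow from the uniqueness clause of \autoref{thm:DefinableMetrisation}, which compares two localising metrics. It follows from the same compactness argument, which applies to any two definable metrics.
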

\begin{proof}
  If $p,q \in \fI_n(T)$, then they can be realised in the prime model of $T$, and therefore at finite distance.
  Therefore $\bigl( \fI_n(T), d \bigr)$ is a metric space.
  Assume now that $K \subseteq \fI_n(T)$ is metrically closed and bounded.
  Let $(p_k)_k$ be any sequence in $K$.
  By passing to a subsequence, we may assume that $p_k \rightarrow p \in \tS_n(T)$ in the logic topology.
  Since $K$ is bounded, $d(p_0,p_k) \leq C$ for some fixed $C$, and since the metric on $\tS_n(T)$ is lower semi-continuous, $d(p_0,p) \leq C < \infty$.
  It follows that $p$ is isolated as well.
  Therefore $d(p_k,p) \rightarrow 0$, and in particular, $p \in K$.
  We have shown that every sequence in $(K,d)$ admits a converging subsequence, so $(K,d)$ is compact.
\end{proof}

\begin{cor}
  \label{cor:LocallyA0CategoricalAutGroup}
  Let $M$ be a locally $\aleph_0$-categorical structure and let $G = \Aut(M)$, $T = \Th(M)$.
  Equip $M$ with a localising metric.
  Then the following hold:
  \begin{enumerate}
  \item
    \label{item:LocallyA0CategoricalAutGroupRealisedTypes}
    For each $n$, the orbit closure space $M^n \dslash G$ can be identified isometrically with $\fI_n(T) \subseteq \tS_n(T)$, the space of isolated $n$-types of $T$, via the map $[a] \mapsto \tp(a)$.
    It is a proper metric space for each $n$, compact for $n = 1$.
  \item
    \label{item:LocallyA0CategoricalAutGrouplRpc}
    The automorphism group $G$ is locally Roelcke precompact and the action $G \curvearrowright M$ is coarsely proper.
  \end{enumerate}
\end{cor}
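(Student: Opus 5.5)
For item~\ref{item:LocallyA0CategoricalAutGroupRealisedTypes}, the plan is to combine the Local Ryll-Nardzewski theorem with \autoref{fct:IsolatedTypeDistance} and \autoref{cor:LocallyA0CategoricalIsolatedProper}. By \autoref{thm:LocalRyllNardzewski}, $T$ is complete and $M$, being its unique separable local model, is the prime model of $T$. \autoref{fct:IsolatedTypeDistance} then gives that $[a] \mapsto \tp(a)$ is a bijection $M^n \dslash G \to \fI_n(T)$, isometric for the type metric. There is one point to address. \autoref{fct:IsolatedTypeDistance} is phrased for the original metric of $M$, whereas here $M$ carries a localising metric $d$. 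I would reduce to the fact in one of two ways. One option is to note that $d$ is a definable metric on $M$, uniformly equivalent on each local component to the original one, so $M$ can be regarded as a structure with distinguished metric $d$ without changing definability, types, or automorphisms. The other is to rerun the proof of the fact verbatim with $d$: for isolated $p$ one can quantify over $p(M)$, so $\inf_{y \vDash p} d(x,y)$ is a formula, and it agrees with $d(\tp(a),p)$ computed in a saturated extension. Properness of $\fI_n(T)$ is exactly \autoref{cor:LocallyA0CategoricalIsolatedProper}. For $n=1$, every $1$-type is isolated, so $\fI_1(T)=\tS_1(T)$, which is compact in the logic topology. On isolated types the logic and metric topologies coincide, so $\tS_1(T)$ is metrically compact as well.

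For item~\ref{item:LocallyA0CategoricalAutGrouplRpc}, I would apply \autoref{thm:LocallyRoelckePrecompactProper} with $X=(M,d)$. This is complete and separable, and $G=\Aut(M)$ is a closed subgroup of $\Iso(M,d)$, because automorphisms preserve the definable metric $d$ and $\Aut(M)$ is closed. One should check that the topology of $G$ as an automorphism group agrees with the pointwise topology for $d$; this holds because $d$ and the original metric are uniformly equivalent on bounded regions, and I expect this to be routine. By item~\ref{item:LocallyA0CategoricalAutGroupRealisedTypes}, $M \dslash G$ is compact, hence proper, and $M^n \dslash G$ is proper for all $n$. Condition~(ii) of \autoref{thm:LocallyRoelckePrecompactProper} therefore holds, so condition~(i) follows: $G$ is locally Roelcke precompact and the action is coarsely proper.

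The main obstacle I expect is the bookkeeping in switching from the structure's own metric to the localising metric $d$, so that both \autoref{fct:IsolatedTypeDistance} and \autoref{thm:LocallyRoelckePrecompactProper} apply to the same metric. I would handle it by the observation that $d$ is a definable metric: the identity on $M$ is uniformly continuous from the original metric to $d$, and conversely on $d$-bounded sets, by the uniform equivalence near $0$ in \autoref{thm:DefinableMetrisation}. Consequently orbit closures, isolation, and the topology of $G$ are unchanged.
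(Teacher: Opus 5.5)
Your proposal is correct and follows essentially the same route as the paper. Primeness of $M$ together with \autoref{fct:IsolatedTypeDistance} gives the isometric identification, \autoref{cor:LocallyA0CategoricalIsolatedProper} gives properness, $\fI_1(T) = \tS_1(T)$ gives compactness for $n=1$, and \autoref{thm:LocallyRoelckePrecompactProper} yields the second item. The paper leaves the switch to the localising metric implicit, and your bookkeeping for it is sound; in fact, since $d(x,y)=0$ forces $x=y$, a compactness argument shows that $d$ and the original metric are uniformly equivalent globally, not just on $d$-bounded sets.
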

\begin{proof}
  Since $M$ is the prime model of $T$, the isometric identification $M^n \dslash G \simeq \fI_n(T)$ holds by \autoref{fct:IsolatedTypeDistance}.
  By \autoref{cor:LocallyA0CategoricalIsolatedProper}, $M^n \dslash G$ is a proper metric space.
  For $n = 1$ we have $\fI_1(T) = \tS_1(T)$, so the type metric on $\tS_1(T)$ is compatible with the logic topology, and $M \dslash G$ is compact.
  This proves \autoref{item:LocallyA0CategoricalAutGroupRealisedTypes}, and \autoref{item:LocallyA0CategoricalAutGrouplRpc} follows from \autoref{thm:LocallyRoelckePrecompactProper}.
\end{proof}

The construction of \autoref{dfn:GroupActionStructure} provides us with an important class of examples of locally $\aleph_0$-categorical structures.

\begin{thm}
  \label{thm:GroupActionStructure}
  Let $(A, d)$ be a complete metric space and let $G \curvearrowright A$ by isometries.
  Assume that $A \dslash G$ is compact and $A^n \dslash G$ is a proper metric space for every $n$.
  Then the structure $A^G$ is locally $\aleph_0$-categorical, its theory eliminates quantifiers, and $d$ is a localising metric.
\end{thm}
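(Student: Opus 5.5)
Let $T = \Th(A^G)$. The plan is to verify condition (ii) of \autoref{thm:LocalRyllNardzewski}: $T$ is complete, every type is locally isolated, and the locality relation is given by isolation of $2$-types. Along the way we show that $d$ defines it. Completeness is automatic, since $T$ is the theory of a structure. By \autoref{prp:GroupActionStructureMinimal}, $A^G$ is the prime model of $T$. By \autoref{fct:IsolatedTypeDistance}, the isolated types are then exactly the types realised in $A$, and $\fI_n(T) \simeq A^n \dslash G$ isometrically.

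\textbf{Step 1: quantifier elimination.} First I will identify the quantifier-free type space. A quantifier-free $n$-type is determined by the values of the predicates $D_p$, $p \in A^m \dslash G$, on tuples of variables drawn from $x$, together with $d(x_i,x_j) \in [0,\infty]$. For a realised tuple $a$, the function $p \mapsto D_p(a) = d([a],p)$ is the distance function of the point $[a]$ in the proper space $A^n \dslash G$. I then want to show that every complete type is determined by its quantifier-free part. Equivalently, for every realisable quantifier-free configuration, the one-variable extension property holds uniformly: given $a$ and a value for $D_q(a,y)$ with $q \in A^{n+1}\dslash G$, some $b$ nearly realises it. For realised data this is immediate from the homogeneity of $A^G$, since $D_{[a,b]}(a,y)$ is the distance to the orbit of $(a,b)$ with $a$ fixed. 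The general case should follow by compactness from the realised one, using density of the realised types, which holds by primeness.

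\textbf{Step 2: local isolation and $d$ localising.} Take a type $p(x)$. Partition the indices into classes with $d(x_i,x_j)^p < \infty$; this is an equivalence relation because $d$ is a generalised metric. On each class $I$, the restriction $p_I$ has finite diameter and all its $1$-types are near-realised, as $A \dslash G$ is compact. So there is $c \in A^I$ whose type is at finite distance from $p_I$. Properness of $A^{|I|}\dslash G$ around $[c]$ forces the limit of approximations of $p_I$ by realised types to be realised. This is the analogue of \autoref{cor:LocallyA0CategoricalIsolatedProper} run in reverse: a closed ball of the proper space $\fI_{|I|}(T)$ is compact, hence closed in $\tS_{|I|}(T)$. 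Therefore $p_I$ is isolated, and $d(x_i,x_j)^p<\infty$ implies $\tp(x_i,x_j)$ is isolated. Conversely, isolated types are realised in $A$, where $d$ is finite.

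\textbf{Main obstacle: determination of $p$ by its pieces.} The hardest point is to show that $p$ is determined by $\sim_p$ and the $p_I$. The concern is that a predicate $D_q$ evaluated on a tuple meeting two classes might retain information. But $D_q(a) = d(a, Gq)$, and if coordinates of $a$ in different classes lie at infinite mutual distance, then every realised $q$ gives $D_q(a)=\infty$. So the quantifier-free type of $a$ is determined by the $\sim_p$ pattern and the quantifier-free types of the restrictions. Quantifier elimination from Step 1 then finishes local isolation. The delicate verification is that Step 1's extension property passes to limits with some coordinates at infinite distance, i.e.\ it must be checked on non-realised types. For this I would first handle extending a single class, where properness gives the needed compactness, and treat coordinates at infinite distance as independent.
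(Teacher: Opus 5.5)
Your overall architecture matches the paper's. First, a type whose coordinates are at pairwise finite distance is isolated, via a point on the diagonal and properness. Second, the predicates $D_q$ are infinite on tuples straddling an infinite distance, so quantifier-free types split along finite-distance components. Third, quantifier elimination then yields local isolation.

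Your Step 2 is essentially right, with three small repairs.
\begin{itemize}
\item The isolation of finite-diameter types must come \emph{before} quantifier elimination, since your single-class extension in Step 1 relies on it.
\item ``Properness around $[c]$'' needs an open neighbourhood of $p_I$ whose realised members stay in a bounded, hence compact and so logic-closed, part of $A^{|I|}\dslash G$. The type distance alone is not logic-continuous. Use $\{D_{[c]}<R\}$ or, as the paper does, $\{\max_{i<j}d(x_i,x_j)<\alpha\}$; density of realised types then puts $p_I$ in that ball.
\item In non-realised tuples $D_q$ is not literally $d(a,Gq)$. What you need is the inequality $D_q(x)\ge\frac12\bigl(d(x_i,x_j)-d(x_i,x_j)^q\bigr)$, which holds in $A$ and hence in every model.
\end{itemize}

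The genuine gap is Step 1. The justification ``by compactness from the realised case, using density of the realised types'' does not work. Knowing that quantifier-free types determine complete types on the dense set $\fI_n(T)$ says nothing about the non-realised types in its closure, and those are exactly the ones at stake.

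What is needed is a back-and-forth in a saturated model, with $a\equiv^{\mathrm{qf}}a'$ and a new element $b$. There are two cases.
\begin{itemize}
\item If $b$ is at finite distance from a component $a_0$, then $\tp(a_0)$ is isolated and pinned down by the atomic condition $D_{\tp(a_0)}(x_0)=0$. So $a_0\equiv a'_0$, and saturation gives $b'$ with $a_0b\equiv a'_0b'$.
\item If $b$ is at infinite distance from all of $a$, you need some $b'\equiv b$ at infinite distance from $a'$. ``Treat coordinates at infinite distance as independent'' names this conclusion but does not show that such a $b'$ exists.
\end{itemize}

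Supplying $b'$ in the second case is the missing idea. It is where unboundedness of $A$ and the finite diameter $R$ of the compact space $A\dslash G$ enter.
\begin{itemize}
\item Approximate $\tp(a')$ by realised tuples $a_k\in A^n$.
\item Pick $c_k\in A$ at distance $\ge k$ from every coordinate of $a_k$.
\item Take $b_k\in A$ realising $\tp(b)$ with $d(b_k,c_k)\le R+1$; this is possible because every $1$-type is realised in $A$.
\item A limit point of $\tp(a_k,b_k)$ extends $\tp(a')$, restricts to $\tp(b)$, and has $d(x_i,y)=\infty$ for all $i$. Realise it over $a'$.
\end{itemize}
If $A$ is bounded, the second case cannot arise. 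In both cases your decomposition of quantifier-free types gives $ab\equiv^{\mathrm{qf}}a'b'$, which completes quantifier elimination and hence local isolation.
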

\begin{proof}
  Let $T = \Th(A^G)$.
  Since $A \dslash G$ is compact, $A \dslash G = \tS_1(T)$.
  Consider the diagonal embedding $A \dslash G \rightarrow A^n \dslash G$ sending $[a] \mapsto [a,a,\ldots,a]$.
  This map is isometric, so its image $\Delta_n \subseteq A^n \dslash G \subseteq \tS_n(T)$ is compact.

  Let us prove that a type $p \in \tS_n(T)$ is isolated if and only if $d(x_i,x_j)^p < \infty$ for all $i,j < n$.
  If $p$ is isolated, then it is realised in the prime model $A^G$, where all distances are finite.
  For the converse, we may choose $\alpha \in \bR$ with $\max_{i<j<n} d(x_i,x_j)^p < \alpha < \infty$.
  The set
  \begin{gather*}
    \Delta_n^\alpha = \bigl\{ q \in A^n \dslash G : d(q,\Delta_n) \leq \alpha\bigr\}
  \end{gather*}
  is closed and bounded in $A^n \dslash G$, and therefore compact by properness.
  Let
  \begin{gather*}
    U^\alpha = \Bigl\{ q \in \tS_n(T) : \max_{i<j<n} d(x_i,x_j)^q < \alpha\Bigr\}.
  \end{gather*}
  This is an open set, $p \in U^\alpha$, and $U^\alpha \cap (A^n \dslash G) \subseteq \Delta_n^\alpha$.
  Since $A^n \dslash G$ is dense in $\tS_n(T)$ and $\Delta_n^\alpha$ is compact, $p \in \Delta_n^\alpha \subseteq A^n \dslash G$, so $p$ is isolated.

  Towards quantifier elimination, let us consider the interpretation of atomic formulas at infinite distance.
  The language of $A^G$ has two kinds of predicate symbols, namely the metric symbol $d$ and the type distance predicates $D_p$.
  If $p \in A^n \dslash G$, $a \in A^n$, and $i,j < n$, then $D_p(a) \geq \frac12  \bigl(d(a_i,a_j) - d(x_i,x_j)^p \bigr)$.
  Therefore, if $b$ is an $n$-tuple in some model of $T$ with $d(b_i,b_j) = \infty$, then $D_p(b) = \infty$.

  Let us consider a tuple $a$ in a model of $T$.
  We may partition $a$ into finite distance components, call them $(a_0,\ldots,a_{k-1})$.
  By our first claim, each $p_i(x_i) = \tp(a_i)$ is isolated, and $\tp^\qf(a)$ tells us that $D_{p_i}(x_i) = 0$, i.e., that $\tp(a_i) = p_i$.
  Therefore, $\tp^\qf(a)$ determines the partition into finite-distance components, as well as the (isolated) type of each component.
  Conversely, our analysis of the atomic formulas tells us that $\tp^\qf(a)$ is determined by the partition of $a$ to finite distance components, together with the quantifier-free type of each component.
  We conclude that the quantifier-free type of $a$ consists exactly of the partition and the quantifier-free types of the components.

  We need to make one last technical observation.
  Let $p \in \tS_n(T)$ (not necessarily isolated), $q = [b] \in A \dslash G = \tS_1(T)$ (automatically isolated), and assume that $A$ has infinite diameter.
  Choose a sequence $(a_k)_k$, $a_k \in A^n$ such that $\tp(a_k) \rightarrow p$.
  Since $A$ is unbounded, we may find $c_k \in A^n$ at distance at least $k$ from $a_k$.
  Let $\delta_k = d\bigl( [c_k], [b] \bigr) = d\bigl( c_k, q \bigr)$.
  Then there exists $b_k \in [b]$ such that $d(b_k,c_k) < \delta_k + 1$.
  Finally, since $A \dslash G$ is compact, it has finite diameter $R$.
  To summarise, we found $a_k$ and $b_k$ in $A$ such that $\tp(a_k) \rightarrow p$, $\tp(b_k) = q$, and $d(b_k, a_k) \geq k - R - 1$.
  Now let $r \in \tS_{n+1}(T)$ be a limit point of the sequence $\tp(a_k,b_k)$, let $M \vDash T$ be sufficiently saturated, and let $a' \in M^n$ realise $p$.
  Then there exists $b' \in M$ such that $\tp(a',b') = r$, and then $\tp(b') = q$, and $b'$ is at infinite distance from $a'$.

  We can now prove that $T$ eliminates quantifiers using the back-and-forth criterion.
  For this, let $M \vDash T$ be sufficiently saturated, and let $a,a' \in M^n$ be such that $a \equiv^\qf a'$.
  Then $a$ and $a'$ are partitioned in the same manner into finite distance components, as $(a_0,\ldots,a_{k-1})$ and as $(a'_0,\ldots,a'_{k-1})$, and $a_i \equiv a_i'$ for each $i < k$.
  Let $b \in M$, and consider two cases.

  Assume first that $b$ is at finite distance from one of the components of $a$, say from $a_0$.
  Then we can find $b' \in M$ such that $a_0b \equiv a'_0b'$.
  The partition of $ab$ into finite distance components is $(a_0b,a_1,\ldots, a_{k-1})$, and similarly for $a'b'$.
  By our characterisation of quantifier-free types, we have $ab \equiv^\qf a'b'$.

  The other case to consider is when $b$ is at infinite distance from $a$.
  Then $A$ has infinite diameter, and we have seen that there exists $b' \in M$ at infinite distance from $a'$ such that $b \equiv b'$.
  In this case, the partition of $ab$ is $(a_0,\ldots,a_{k-1},b)$, similarly for $a'b'$, and again $ab \equiv^\qf a'b'$.

  This completes the proof of the back-and-forth criterion, and therefore of quantifier elimination in $T$.
  Together with our characterisation of quantifier-free types, this implies that all types are locally isolated.
  In addition, $d(a,b) < \infty$ if and only if $\tp(a,b)$ is isolated, so $d$ is a localising metric, and the proof is complete.
\end{proof}

\begin{cor}
  \label{cor:GroupActionStructureLFromGroup}
  Let $G$ be a locally Roelcke precompact Polish group, equipped with a coarsely proper, left-invariant, compatible metric $d_L$.
  Then the structure $\widehat{G}_L^G$ associated to the action $G \curvearrowright \widehat{G}_L$ is locally $\aleph_0$-categorical.

  In particular, a Polish group $G$ is locally Roelcke precompact if and only if it is the automorphism group of some locally $\aleph_0$-categorical structure.
\end{cor}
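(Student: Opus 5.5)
The plan is to apply \autoref{thm:GroupActionStructure} to the action $G \curvearrowright A$ with $A = \widehat{G}_L$, the completion of $G$ with respect to the coarsely proper metric $d_L$. The space $A$ is complete and separable, and by \autoref{fct:GroupActionCompletion} $G$ acts on it continuously by isometries. Since $G$ is dense in $A$, the quotient $A \dslash G$ is a singleton, and in particular compact. The properness of $A^n \dslash G$ for every $n$ is exactly the content of \autoref{cor:LocallyRoelckePrecompactAction}, going from the first item to the second. More directly, the left translation action of $G$ on $(G,d_L)$ is coarsely proper, hence so is the extended action on $\widehat{G}_L$, because orbits of points of $G$ are unchanged. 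One then invokes \autoref{thm:LocallyRoelckePrecompactProper}. Its standing hypotheses need checking: it asks for $G \le \Iso(A)$ closed. I would handle this by noting that \autoref{lem:lRpcActionProduct}, which is the only direction used, requires only a continuous isometric coarsely proper action. Granting these points, \autoref{thm:GroupActionStructure} yields that $\widehat{G}_L^G$ is locally $\aleph_0$-categorical with $d_L$ localising.

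For the ``in particular'', one direction is \autoref{cor:LocallyA0CategoricalAutGroup}. If $M$ is locally $\aleph_0$-categorical, then $\Aut(M)$ is locally Roelcke precompact. It is also Polish, being the automorphism group of a separable structure in a separable language. For the converse, let $G$ be Polish and locally Roelcke precompact. By \autoref{fct:CoarselyProperMetric} it has a coarsely proper compatible left-invariant metric $d_L$, and the first part gives the locally $\aleph_0$-categorical structure $\widehat{G}_L^G$. It remains to see that $\Aut(\widehat{G}_L^G) = G$. By \autoref{prp:GroupActionStructureMinimal}, this follows once $G$, viewed inside $\Iso(\widehat{G}_L)$ via left translation, is a closed subgroup carrying its own topology.

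The main obstacle, though a standard one, is this identification. The map $G \to \Iso(\widehat{G}_L)$ is injective and continuous for the pointwise topology. It is a homeomorphism onto its image because $g_k \to g$ pointwise on the point $1 \in G$ gives $d_L(g_k,g) \to 0$. Closedness comes from Polishness: the image is a Polish subgroup of a Polish group, hence closed (a $G_\delta$ subgroup is closed). Alternatively, any pointwise limit $\phi$ of translations $g_k$ satisfies $g_k = g_k\cdot 1 \to \phi(1)$ in $\widehat{G}_L$. Cauchyness in $d_L$ together with the convergence of $g_k^{-1}$, obtained by evaluating $\phi^{-1}$, forces convergence in $G$. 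With this, $\Aut(\widehat{G}_L^G)=G$ as topological groups, and the corollary follows.
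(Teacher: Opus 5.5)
Your proposal is correct and follows the paper's route: the paper's proof consists of observing that $\widehat{G}_L \dslash G$ is a point, getting properness of $\widehat{G}_L^n \dslash G$ from \autoref{cor:LocallyRoelckePrecompactAction}, and applying \autoref{thm:GroupActionStructure}. Your extra checks fill in what the paper leaves implicit and are sound: you note that only \autoref{lem:lRpcActionProduct} is needed for properness, you prove $G$ is closed in $\Iso(\widehat{G}_L)$ via Polishness so that \autoref{prp:GroupActionStructureMinimal} gives $\Aut(\widehat{G}_L^G) = G$, and you use \autoref{cor:LocallyA0CategoricalAutGroup} for the converse direction of the ``in particular''.
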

\begin{proof}
  The space $\widehat{G}_L \dslash G$ is a single point, and for every $n$, $\widehat{G}_L^n \dslash G$ is proper by \autoref{cor:LocallyRoelckePrecompactAction}.
  Now apply \autoref{thm:GroupActionStructure}.
\end{proof}

A structure (in a separable language) is compact if and only if it is the unique model of its theory, up to isomorphism,  if and only if it is $\aleph_0$-categorical and its automorphism group is compact.
The local analogue is contained in the following.

\begin{thm}
  \label{thm:LocallyCompact}
  Let $T$ be a locally $\aleph_0$-categorical theory, let $M$ be its unique local separable model, equipped with a localising metric $d$, and let $G = \Aut(M)$.
  Then the following are equivalent:
  \begin{enumerate}
  \item
    The metric space $(M, d)$ is proper.
  \item
    The group $G$ is locally compact.
  \item Every elementary embedding of $M$ in a local model is an isomorphism.
  \item The structure $M$ is the unique local model of $T$ up to isomorphism.
  \end{enumerate}
  When $T$ is $\aleph_0$-categorical, these conditions hold if and only if $T$ admits a unique model up to isomorphism, which must be compact.
  When $T$ is not $\aleph_0$-categorical, these are further equivalent to
  \begin{enumerate}[resume]
  \item The theory $T$ is $\kappa$-categorical for some (every) uncountable cardinal $\kappa$.
  \end{enumerate}
\end{thm}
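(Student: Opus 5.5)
Our strategy is to prove the cycle (i)$\Rightarrow$(ii)$\Rightarrow$(i), then (i)$\Rightarrow$(iii)$\Rightarrow$(iv)$\Rightarrow$(i), and finally treat the two cases about $\aleph_0$-categoricity and $\kappa$-categoricity separately.

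\emph{(i)$\Leftrightarrow$(ii).} For (i)$\Rightarrow$(ii), fix $x \in M$. The set $V = \{g : d(x,gx) < 1\}$ is a neighbourhood of the identity. Since $(M,d)$ is proper and $M^n \dslash G$ is proper (\autoref{cor:LocallyA0CategoricalAutGroup}), we aim to show that $V$ has compact closure. Take a sequence $g_k \in V$ and a countable dense $\{x_m\} \subseteq M$. Each $g_k x_m$ lies in a fixed closed ball, which is compact. A diagonal argument gives pointwise convergence $g_k x_m \to y_m$, hence a limit isometric embedding $h$. Applying the same argument to $g_k^{-1}$ (also bounded moves, since $d(x,g_k^{-1}x) = d(g_kx,x)$) yields a limit that inverts $h$, so $h$ is an isometry. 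It preserves all predicates by continuity, so $h \in G$. Since $G$ carries the pointwise convergence topology, $\overline{V}$ is compact.

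For (ii)$\Rightarrow$(i), let $K$ be a compact neighbourhood of $1$. By \autoref{cor:LocallyA0CategoricalAutGroup} the action is coarsely proper, so $\{g : d(x,gx) \le r\}$ is coarsely bounded. In a locally compact group, coarsely bounded sets are relatively compact, hence covered by finitely many translates $F K$. A closed ball $B(x,r)$ is then contained in $\overline{\{g : d(x,gx)\le r'\}\cdot [x]_{\text{loc}}}$. The point is that $M\dslash G$ is compact and, by \autoref{lem:CoCompact}, there is a compact $C$ meeting every orbit closure. One shows that $B(x,r) \subseteq \overline{E C}$ with $E$ relatively compact, using that orbits of a locally compact Polish group are closed. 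Then $\overline{E}C$ is compact as a continuous image of a product of compacts.

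\emph{(i)$\Rightarrow$(iii)$\Rightarrow$(iv)$\Rightarrow$(i).} Let $f\colon M \to N$ be elementary with $N$ local, and let $b \in N$. Pick $a \in M$. Then $d(f(a),b) = r < \infty$, and $\tp(f(a),b)$ is isolated, hence realised in $M$ by some $(a',c)$. By homogeneity of the prime model we may take $a'$ arbitrarily close to $a$, giving realisations $c_k \in M$ of points in $B(a,r+1)$. Quantifying over the isolated type, we get $c_k\in M$ with $d(f(c_k),b)\to 0$. Properness makes $(c_k)$ have a convergent subsequence, so $b \in f(M)$ and $f$ is onto. For (iii)$\Rightarrow$(iv), every local model $N$ receives an elementary embedding of the prime model $M$, which is then an isomorphism. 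For (iv)$\Rightarrow$(i), suppose some closed ball $B(a,r)$ in $M$ is non-compact. Then it contains an $\varepsilon$-separated sequence. By compactness we can realise in some local component of an elementary extension a type with $\kappa$ many pairwise $\varepsilon$-separated points within distance $r$ of $a$. This component is a local model of density $>\aleph_0$, contradicting (iv).

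\emph{Categoricity cases.} If $T$ is $\aleph_0$-categorical, every model is local and $d$ is bounded, so (i) says $M$ is compact, and (iv) says $T$ has a unique model. When $T$ is not $\aleph_0$-categorical, the weakly local structure together with the last item of \autoref{thm:LocalRyllNardzewski} shows that a model is determined by its components and their multiplicity. Under (iv), the models of density $\kappa$ are exactly $\kappa$ copies of $M$, which gives $\kappa$-categoricity for every uncountable $\kappa$. Conversely, if there is a local model $N \ne M$, then $N$ is necessarily nonseparable, say of density $\lambda$. Taking $\kappa \ge \lambda$ copies of $M$ versus $N$ plus $\kappa$ copies of $M$ gives non-isomorphic models of density $\kappa$. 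The same construction works for any $\kappa$ after passing to local models of density exactly $\kappa$ via Löwenheim–Skolem inside a component. We expect the main obstacle to be (ii)$\Rightarrow$(i), namely controlling balls via a compact set of group elements.
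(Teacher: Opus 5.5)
Your cycle has a genuine gap at (i)$\Rightarrow$(iii). This is the only link you give from (i)/(ii) to (iii)/(iv).

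The sentence ``Quantifying over the isolated type, we get $c_k \in M$ with $d(f(c_k),b) \to 0$'' is unjustified. The $c_k$ are merely realisations of $q = \tp(f(a),b)$ over points $a'_k$ close to $a$. Nothing ties $f(c_k)$ to $b$ itself, since $b$ is not known to lie in $f(M)$ and so cannot serve as a parameter in anything you transfer between $N$ and $M$.

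You can see the step must be wrong because it never uses properness. If $d(f(c_k),b) \to 0$, then $(c_k)_k$ is Cauchy and converges by completeness of $M$, so you would have proved (iii) for every locally $\aleph_0$-categorical theory. That is false already for an infinite set with the discrete $\{0,1\}$-valued metric, where any non-surjective injection is an elementary self-embedding. Where you do invoke properness, it only yields a limit $c$ of a subsequence of $(c_k)_k$ with $\tp(a,c) = \tp(f(a),b)$. But that $b$ has the same type over $f(a)$ as some element of $f(M)$ is automatic for an isolated type, and says nothing about whether $b \in f(M)$.

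Properness has to be used before transferring to $N$. With $r = d(f(a),b)$ and $\varepsilon > 0$, take a finite $\varepsilon$-net $c_1,\ldots,c_m$ of the compact ball $\{y \in M : d(a,y) \le r+1\}$. Then
\[
  \sup_y \, \min\Bigl( \bigl(\min_{i \le m} d(y,c_i) - \varepsilon\bigr)^+ ,\ \bigl(r+1-d(a,y)\bigr)^+ \Bigr) = 0
\]
holds in $M$, hence in $N$ with parameters $f(a), f(c_1), \ldots, f(c_m)$. Evaluating at $y = b$ gives $\min_i d\bigl(b,f(c_i)\bigr) \le \varepsilon$, so $b \in \overline{f(M)} = f(M)$.

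The paper avoids this by proving (ii)$\Rightarrow$(iii) instead. The left uniformity of a locally compact group is complete, so $\widehat{G}_L = G$. By homogeneity, $\widehat{G}_L$ is the semigroup of elementary self-embeddings of $M$, so all of these are onto. Downward L\"owenheim--Skolem then passes from separable to arbitrary local extensions.

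The rest of your plan is sound. Your direct (ii)$\Rightarrow$(i) is a legitimate alternative to the paper's route, which never proves this implication directly but goes through (iii) and (iv). However, the claim that ``orbits of a locally compact Polish group are closed'' is false in general; think of an irrational rotation of the circle. What your argument actually needs is this: if $g_k c \to y$ with $c \in C$ and $d(x,y) \le r$, then $d(g_k x, x) \le d(x,c) + d(g_k c, x)$ is bounded. So by coarse properness the $g_k$ lie in a relatively compact set and subconverge, placing $y$ in the compact set $\overline{E}C$.
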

\begin{proof}
  \begin{cycprf}
  \item
    The isometry group of a proper metric space is locally compact, as is any closed subgroup thereof.
  \item
    By the homogeneity of $M$, we can identify $\widehat{G}_L$ with the semigroup of elementary embeddings of $M$ into itself.
    In a locally compact group, the left uniformity is complete, so $G = \widehat{G}_L$ and every elementary embedding of $M$ into itself is an isomorphism.

    Stated equivalently, $M$ admits no proper separable local elementary extension.
    By Downward Löwenheim--Skolem, it follows that $M$ admits no proper local elementary extension, which is what we want.
  \item
    Since $M$ embeds elementarily in every model of $T$.
  \item[\impfirst]
    Let us first show that $T$ has the following property:
    \begin{enumerate}
    \item[$(*)$]
      For every $R > r > 0$, there exists $N \in \bN$ such that $T$ forbids the existence of a sequence $(a_i : i < N)$ with $r \leq d(a_i,a_j) \leq R$ for all $i < j$.
    \end{enumerate}
    Indeed, assume that no such $N$ exists for $0 < r < R$.
    Then by compactness, $T$ admits a model containing an uncountable sequence $(a_\alpha : \alpha < \aleph_1)$ such that $r \leq d(b_\alpha,b_\beta) \leq R$ for every $\alpha < \beta < \aleph_1$.
    The sequence must belong to a single local component, which is a non-separable local model of $T$.

    By $(*)$, every local model of $T$ is proper.
  \item[\impnum{4}{5}]
    We assume that $T$ is not $\aleph_0$-categorical, so $M$ cannot be compact, and $T$ admits models of any density character.
    Any model of $T$ of uncountable density character $\kappa$ must consist of $\kappa$ many local components, which are all isomorphic to $M$.
    It follows from \autoref{thm:LocalRyllNardzewski} that any two such models are isomorphic.
  \item[\impfirst]
    As above, it is enough to prove property $(*)$.
    Let $\kappa$ be uncountable.
    We assume that $T$ is not $\aleph_0$-categorical, so $M$ cannot be bounded, and $T$ admits models with arbitrarily many local components.
    Using Downward Löwenheim--Skolem, $T$ admits a model consisting of $\kappa$ many separable local components.
    If $(*)$ fails, then by a similar argument, $T$ admits a local model of density character $\kappa$.
    These two models cannot be isomorphic, so $T$ cannot be $\kappa$-categorical.
  \end{cycprf}
\end{proof}

\begin{rmk}
  While a compact metric structure is automatically $\aleph_0$-categorical, a proper metric structure is not necessarily locally $\aleph_0$-categorical---see \autoref{exm:ZOrd}.
\end{rmk}


\section{Definability in locally $\aleph_0$-categorical structures}
\label{sec:Definability}

Our first goal for this section is to describe the full space of types $\tS_n(T)$ for a locally $\aleph_0$-categorical theory $T$ in terms of the isolated types.
This is, in essence, equivalent to describing the algebra of definable predicates on a locally $\aleph_0$-categorical structure $M$.
We start with some general considerations.

Recall that when $G$ is a topological group, coarsely bounded subsets, in the sense of Rosendal \cite{Rosendal:CoarseGeometry}, form an ideal of subsets of $G$ which contains all compact sets.
When $G$ is Polish and locally bounded, it admits a compatible left-invariant metric $d_L$ which is coarsely proper, namely, such that the $d_L$-bounded sets are precisely the coarsely bounded sets.
When $G$ is Polish and locally Roelcke precompact (which implies locally bounded), the coarsely bounded sets are exactly the Roelcke precompact ones.

For the following definition we do not need a topology but just a coarse structure, i.e., a reasonable notion of when a sequence $(g^k)_k$ of elements of $G$ ``diverges to infinity''.
This can be coded, for example, by a left-invariant pseudo-metric $d_L$ on $G$, where we say that $g^k \rightarrow \infty$ if $d_L(1,g^k) \rightarrow \infty$, without requiring $d_L$ to arise from a topological structure.
In this section, we will often consider sequences in Cartesian powers of $G$ and will need double indices.
In particular, superscripts on elements will denote indices and not powers in the group.

\begin{dfn}
  \label{dfn:ProperlyInvariant}
  Let $G$ be a group equipped with a left-invariant pseudo-metric $d_L$, acting on a set $X$.
  Fix $n \geq 1$ and let $x$ denote a tuple in $X^n$.
  Let $\sim$ be an equivalence relation on $n$, and let $n/{\sim}$ be the associated partition of $n$ into equivalence classes.
  We will say that $\sim$ is \emph{trivial} if it has only one equivalence class.
  \begin{itemize}
  \item For a sequence $(g^k)_k$ of elements of $G^{n/{\sim}}$, write that $g^k \rightsquigarrow \infty$ if $d_L(g^k_I, g^k_J) \rightarrow_k \infty$ for all $I \neq J$.
    In particular, if $\sim$ is trivial, then $g^k \rightsquigarrow \infty$ vacuously.
  \item For $I \in n/{\sim}$, we denote by $x_I \in X^I$ the restriction of $x$ to the index set $I$.
  \item For $g = (g_I : I \in n/{\sim}) \in G^{n/{\sim}}$, we define $gx \in X^n$ by letting $(g x)_I = g_I x_I$ for all $I \in n/{\sim}$, giving rise to an action $G^{n/{\sim}} \curvearrowright X^n$.
  \item
    Let $K$ be a topological space and $P\colon X^n \rightarrow K$ be a map.
    We say that $P$ is \emph{$\sim$-properly $G$-invariant} if for every sequence $(g^k)_k$ of elements of $G^{n / {\sim}}$ and every $x \in X^n$, if $g^k \rightsquigarrow \infty$, then the sequence $P(g^k x)$ converges.
    If this holds for every equivalence relation $\sim$ on $n$, we say that $P$ is \emph{properly $G$-invariant}.
  \end{itemize}
\end{dfn}

When $\sim$ is trivial, we always have $g^k \rightsquigarrow \infty$ in $G$, so $P$ is $\sim$-properly $G$-invariant if and only if it is $G$-invariant, whence the terminology.
In particular, if $P$ is properly $G$-invariant, it is $G$-invariant.

\begin{rmk}
  \label{rmk:ProperlyInvariantDichotomy}
  We have the following dichotomy:
  \begin{itemize}
  \item If $d_L$ is bounded, then we can never have $g^k \rightsquigarrow \infty$ for a non-trivial $\sim$.
    Therefore, a predicate $P$ is properly $G$-invariant if and only if it is $G$-invariant.
  \item If $d_L$ is unbounded, then for every $\sim$, we can find a sequence $g^k \rightsquigarrow \infty$ in $G^{n / {\sim}}$.
    If $P$ is properly $G$-invariant, then the limit $P(g^k x)$ depends only on $x$ and $\sim$, and will be denoted by $P_\sim(x)$.
  \end{itemize}
\end{rmk}

\begin{dfn}
  \label{dfn:CoarselyCompatible}
  Let $X$ be a metric space, and $G \curvearrowright X$ by isometries.
  Let $d_L$ be a left-invariant pseudo-metric on $G$, and assume that $d_L(g^k,h^k) \rightarrow \infty$ if and only if $d(g^k x, h^k x) \rightarrow \infty$ for some (equivalently, any) $x \in X$.
  Then we say that $d_L$ is \emph{coarsely compatible} with the action $G \curvearrowright X$.

  We say that a predicate $P\colon X^n \rightarrow K$ is \emph{($\sim$-)properly $G$-invariant} if it is so relative to any coarsely compatible, left-invariant pseudo-metric.
\end{dfn}

Clearly, a coarsely compatible, left-invariant pseudo-metric always exists (e.g., $d_L(g,h) = d(gx,hx)$, where $x \in X$).
Moreover, all such pseudo-metrics induce the same notion of $g^k \rightsquigarrow \infty$, and therefore the same notion of a properly $G$-invariant predicate.

When $M$ is a locally $\aleph_0$-categorical structure equipped with a localising metric and $G = \Aut(M)$, then the action $G \curvearrowright M$ is coarsely proper.
Therefore, a left-invariant, compatible pseudo-metric $d_L$ is coarsely proper on $G$ if and only if it is coarsely compatible with the action $G \curvearrowright M$.

\begin{thm}
  \label{thm:LocallyA0CategoricalDefinable}
  Let $M$ be a locally $\aleph_0$-categorical structure equipped with a localising metric and $G = \Aut(M)$.
  Let $K$ be a compact space (for example, a closed bounded subset of $\bR$).

  Then a predicate $P \colon M^n \to K$ is definable (without parameters) if and only if it is uniformly continuous and properly $G$-invariant.
\end{thm}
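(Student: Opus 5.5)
The plan is to identify definable predicates $P\colon M^n \to K$ with continuous functions on $\tS_n(T)$, and then to show that the restriction map from such functions to uniformly continuous, properly $G$-invariant functions on $M^n$ is a bijection. Since $M$ is the prime model, every $n$-type realised in $M$ is isolated. By \autoref{cor:LocallyA0CategoricalAutGroup}, $M^n \dslash G$ is identified with $\fI_n(T)$, which is dense (indeed dense open) in $\tS_n(T)$.

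For the forward direction, let $P = \varphi^M$ with $\varphi$ a formula. Then $P$ is uniformly continuous and $G$-invariant. Fix $\sim$ on $n$, $x \in M^n$, and $g^k \rightsquigarrow \infty$. We want $\tp(g^k x)$ to converge in $\tS_n(T)$. Take any accumulation point $q$ of this sequence. By lower semicontinuity of the generalised metric, $d(x_i,x_j)^q = \infty$ when $i \not\sim j$, because coarse compatibility with the action gives $d(g_I x_i, g_J x_j) \to \infty$. Within each class $I$, the restriction is constantly $\tp(x_I)$, which is isolated. By local isolation (\autoref{thm:LocalRyllNardzewski}), $q$ is therefore the unique type with partition $\sim$ and restrictions $\tp(x_I)$. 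So the sequence of types converges, and hence so does $P(g^k x)$.

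For the converse, let $P$ be uniformly continuous and properly $G$-invariant. Define $\tilde P\colon \tS_n(T) \to K$ as follows. An arbitrary $q \in \tS_n(T)$ has a locally isolated description: an equivalence relation $\sim_q$ and isolated restrictions $q_I$, each realised in $M$ by some $x_I$. Let $x$ be the concatenated tuple and set $\tilde P(q) = P_{\sim_q}(x)$, the limit from \autoref{rmk:ProperlyInvariantDichotomy} (when $\sim_q$ is trivial, this is simply $P(x)$). This is well defined: a different choice of realisations differs by elements of $G$ up to closure on each class, and uniform continuity together with the fact that the limit is independent of the diverging sequence handles this. Moreover, $\tilde P$ extends $P$ through $M^n \to \fI_n(T)$. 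It remains to prove that $\tilde P$ is continuous on $\tS_n(T)$; then it is a definable predicate agreeing with $P$ on $M$.

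Continuity is the main obstacle. Suppose $q^m \to q$ in the logic topology. I would reduce to the case where all $q^m$ are isolated: by density of $\fI_n(T)$ and a diagonal argument, each $\tilde P(q^m)$ can be approximated by a value $P(y)$ with $\tp(y)$ close to $q^m$. This approximation step needs some care, and I would use the properly invariant limit defining $\tilde P(q^m)$, realised by $g^k y$ for large $k$. So suppose $q^m = \tp(y^m)$ with $y^m \in M^n$. Pass to a subsequence so that for each pair $i,j$ the distances $d(y^m_i,y^m_j)$ either converge to a finite value or tend to infinity. The pairs with bounded distances define an equivalence relation $\approx$, and it refines or equals $\sim_q$. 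On each $\approx$-class, properness of $M^{I} \dslash G$ (\autoref{cor:LocallyA0CategoricalIsolatedProper}) lets us translate by $h^m_I \in G$ so that $h^m_I y^m_I$ converges in $M^I$ to some $z_I$ realising $q_I$. The resulting elements $h^m = (h^m_I)_I$ and the relative displacements satisfy $\rightsquigarrow \infty$ for the relation $\approx$. In particular, $\approx$ must coincide with $\sim_q$: a pair with infinite limit distance in $q$ cannot stay bounded, and conversely a pair lying in one class of $\sim_q$ has a finite distance. The remaining point is to show $P(y^m) \to P_{\sim_q}(z)$. Uniform continuity of $P$ lets us replace $y^m$ by $(h^m)^{-1} z$. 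After that, I need a uniform version of proper invariance: for a fixed $z$, $P(g z) \to P_\sim(z)$ whenever $g \rightsquigarrow \infty$. This follows from the sequential definition by a standard contradiction argument, taking a subsequence that stays away from the limit. The conclusion is $\tilde P(q^m) \to \tilde P(q)$, as required.
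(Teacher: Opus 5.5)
Your proposal is correct and follows essentially the paper's argument. The forward direction is the same local-isolation argument on limit points of the types of $g^k x$. For the converse the paper likewise realises the isolated restrictions $q_I$ in $M$, translates classwise, checks that the translating tuple satisfies $\rightsquigarrow \infty$, and concludes by proper invariance plus uniform continuity. It does this more directly: it uses the dense-extension criterion instead of an explicit $\tilde P(q) = P_{\sim_q}(x)$, gets the translates from isolation of $q_I$ (logic convergence implies metric convergence) rather than from properness of $M^I \dslash G$ and subsequences, and needs no separate ``uniform version'' of proper invariance, since the sequential definition applied to $(h^m)^{-1}$ already suffices.
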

\begin{proof}
  We may equip $M$ with a localising metric, so that the action $G \curvearrowright M$ is coarsely proper by \autoref{cor:LocallyA0CategoricalAutGroup}.

  Assume first that $P$ is definable.
  It is a standard fact that every definable predicate is uniformly continuous.
  In order to check that it is properly $G$-invariant, assume that $a \in M^n$, $\sim$ is an equivalence relation on $n$, and $g^k \rightsquigarrow \infty$ in $G^{n / {\sim}}$ as in \autoref{dfn:ProperlyInvariant}.
  Let $T = \Th(M)$.
  Let $p^k = \tp(g^k a) \in \tS_n(T)$, and for $I \in n / {\sim}$, let $p_I = \tp(a_I) \in \tS_I(T)$.

  Let $q$ be a limit point of $(p^k)_k$.
  If $i \sim j$, then $d(x_i,x_j)^q = d(a_i,a_j)$.
  On the other hand, if $i \nsim j$, then $d(x_i,x_j)^q = \lim_k d( g^k_I a_i, g^k_J a_j) = \infty$, where $I$ and $J$ are the respective classes of $i$ and $j$.
  By local isolation of types, $q$ is the unique type such that ${\sim}_q = {\sim}$ and $q_I = p_I$ for each $I \in n / {\sim}$.
  Therefore, $p^k \rightarrow q$, and $P(g^k a) = P(p^k) \rightarrow P(q)$.

  Conversely, assume that $P$ is uniformly continuous and properly $G$-invariant.
  By \autoref{cor:LocallyA0CategoricalAutGroup}, we may identify $M^n \dslash G$, as a metric space, with $\fI_n(T) \subseteq \tS_n(T)$, the space of isolated $n$-types of $T$.
  It follows from continuity and $G$-invariance that $P$ induces a continuous function $M^n \dslash G \to \bR$.
  Therefore, all that is left to show is that $P$ admits a continuous extension (necessarily unique) to $\tS_n(T)$.
  It will suffice to show that if $p^k \in \fI_n(T)$ and $p^k \rightarrow q \in \tS_n(T)$, then $P(p^k)$ converges.

  We may assume that $p^k = \tp(b^k)$ and $q = \tp(c)$, where $b^k \in M^n$ but $c$ may belong to another model of $T$.
  Recall that $\sim_q$ is the equivalence relation on $n$ where $i \sim_q j$ if $d(c_i,c_j) < \infty$.
  If $I \in n / {\sim_q}$, then $q_I = \tp(c_I)$ is isolated, so it is realised by a tuple $a_I \in M^I$.
  Since the classes $I$ are disjoint, we may view the $a_I$ as subtuples of some $a \in M^n$.

  If $I \in n / {\sim_q}$, then $\tp(b^k_I) \rightarrow \tp(c_I) = \tp(a_I)$ in $\fI_I(T)$.
  Since, on isolated types, the type metric is compatible with the logic topology, there exist $g^k_I \in G$ such that $d(b^k_I, g^k_I a_I) \rightarrow 0$.
  Equivalently, there exist $g^k \in G^{n / {\sim_q}}$ such that $d(b^k, g^k a) \rightarrow 0$.
  If $i \nsim_q j$, then $d(b^k_i,b^k_j) \rightarrow d(c_i,c_j) = \infty$, and therefore $d(g^k_I a_i,g^k_J a_j) \rightarrow \infty$, where $I$ and $J$ are the respective classes.
  Therefore $g^k \rightsquigarrow \infty$, so $P(g^k a) \rightarrow P_\sim(a)$ by proper $G$-invariance, and since $P$ is assumed uniformly continuous, $P(b^k) \rightarrow P_\sim(a)$ as well.
\end{proof}

Recall that two structures on the same set (but possibly in different languages) are \emph{bi-definable} if they have the same definable predicates.

\begin{cor}
  \label{cor:LocallyA0CategoricalDefinitionEquivalent}
  Let $M_0$ and $M_1$ be two locally $\aleph_0$-categorical structures in possibly distinct languages, with a common underlying set $M$, common automorphism group $G$, and uniformly equivalent definable metrics.
  Then $M_0$ and $M_1$ are bi-definable.

  In particular, if $M$ is a locally $\aleph_0$-categorical structure and $G = \Aut(M)$, then $M$ is bi-definable with $M^G$.
\end{cor}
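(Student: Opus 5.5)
The plan is to apply \autoref{thm:LocallyA0CategoricalDefinable} to each structure and compare the resulting descriptions of definable predicates. That theorem says a predicate $P\colon M^n \to K$ into a compact space is definable in $M_i$ if and only if it is uniformly continuous and properly $G$-invariant. Both conditions are taken with respect to a localising metric of $M_i$. So it suffices to show that the two notions, uniform continuity and proper $G$-invariance, are the same for $M_0$ and $M_1$.

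\textbf{Step 1: align the metrics.} Let $d_0$ and $d_1$ be localising metrics on $M_0$ and $M_1$, and assume these are the uniformly equivalent definable metrics of the hypothesis. If the given metrics are not localising, I would first pass to localising ones, using \autoref{prp:LocallyA0CategoricalLocalisingUnbounded} (unbounded definable metrics are localising in the non-$\aleph_0$-categorical case). Uniform equivalence then makes uniform continuity the same notion for both structures.

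\textbf{Step 2: compare proper invariance.} Proper $G$-invariance depends only on the coarse structure on $G$, namely which sequences satisfy $g^k \rightsquigarrow \infty$. By \autoref{cor:LocallyA0CategoricalAutGroup}, the action $G \actson (M,d_i)$ is coarsely proper for each $i$. So a left-invariant compatible pseudo-metric is coarsely compatible with $(M,d_i)$ exactly when it is coarsely proper on $G$, which is a property of the topological group $G$ alone. Hence a coarsely proper compatible left-invariant metric $d_L$, which exists by \autoref{fct:CoarselyProperMetric}, is coarsely compatible with both actions. The group topology is the same in both cases because it is the topology of pointwise convergence on $M$, and $d_0$, $d_1$ induce the same topology. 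It follows that the notions of properly $G$-invariant predicate coincide. Combining Steps 1 and 2 with \autoref{thm:LocallyA0CategoricalDefinable} gives that $M_0$ and $M_1$ have the same definable predicates into every compact $K$. This covers $[-\infty,\infty]$-valued predicates too, via the homeomorphism with $[0,1]$.

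\textbf{Step 3: the case of $M^G$.} Let $d$ be a localising metric on $M$, and view $M$ as the metric space $(M,d)$ with the isometric action of $G$. By \autoref{cor:LocallyA0CategoricalAutGroup}, $M \dslash G$ is compact and $M^n \dslash G$ is proper for every $n$. So \autoref{thm:GroupActionStructure} shows that $M^G$ is locally $\aleph_0$-categorical, with $d$ as a localising metric. Since $G$ is closed in $\Iso(M,d)$, being the automorphism group of a structure, \autoref{prp:GroupActionStructureMinimal} gives $\Aut(M^G) = G$. Both structures then have the common definable metric $d$, and the first part applies.

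The main obstacle is Step 1: the hypothesis only says the definable metrics are uniformly equivalent, and it is not immediate that they are also coarsely equivalent (localising). I would handle this through the coarse properness in Step 2 rather than by comparing the metrics directly. The coarse structure on $M$ induced by any localising metric is the one pulled back from the coarse structure of $G$ via orbit maps, which is intrinsic to $G$. So any two localising metrics on $M$ with the same automorphism group are coarsely equivalent, and the only extra input needed is that each $M_i$ admits a localising metric uniformly equivalent to the given one.
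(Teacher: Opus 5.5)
Your proposal is correct and follows the paper's own argument essentially step for step. Uniform equivalence gives the same group topology on $G$, and coarse properness of both actions (\autoref{cor:LocallyA0CategoricalAutGroup}) makes the localising metrics coarsely equivalent and proper $G$-invariance intrinsic to $G$; \autoref{thm:LocallyA0CategoricalDefinable} then concludes, and the $M^G$ case goes through \autoref{thm:GroupActionStructure} exactly as you do.

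There is one loose end in your Step 1, which the paper also passes over with ``we may assume''. \autoref{prp:LocallyA0CategoricalLocalisingUnbounded} does not help when the given metric $d_i$ is bounded. In that case use $\max(d_i,\rho_i)$, where $\rho_i$ is any localising metric from \autoref{thm:DefinableMetrisation}. Since $\rho_i$ is definable, it is $d_i$-uniformly continuous, so this maximum is uniformly equivalent to $d_i$ and is still localising.
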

\begin{proof}
  Since the metrics on the two structures are uniformly equivalent, the automorphism group $G$ carries the same locally Roelcke precompact topology for both structures, and therefore the same coarse structure.
  We may assume that the definable metrics are isolating, in which case $G \curvearrowright M_i$ is coarsely proper, implying that the localising metrics are coarsely equivalent as well, and give rise to the same notion of proper $G$-invariance.
  We conclude using the characterisation of definable predicates in \autoref{thm:LocallyA0CategoricalDefinable}.

  In particular, if $M$ is locally $\aleph_0$-categorical, then $M^G$ is locally $\aleph_0$-categorical by \autoref{thm:GroupActionStructure}, so the main clause applies.
\end{proof}

We have given a description of the algebra of definable predicates on $M^n$, from which the space of types $\tS_n(T)$ can be obtained as the Gelfand dual.
Next we provide a more explicit construction of $\tS_n(T)$ as the completion of $M^n \dslash G = \fI_n(T)$ with respect to an appropriate uniform structure.
This uniformity can be defined in the abstract setting of a group acting isometrically on a metric space.

\begin{dfn}
  \label{dfn:InvariantInfinityUniformity}
  Let $G$ be a group acting on a metric space $(X, d)$ by isometries and let $n \in \bN$.
  \begin{itemize}
  \item We recall that for $x \in X^n$ and $I \subseteq n$, $x_I \in X^I$ denotes the restriction of the tuple $x$ to $I$.
    Notice that $[x_I] \in X^I \dslash G$ only depends on $[x] \in X^n \dslash G$ and on $I$, allowing us to define $[x]_I = [x_I]$.
    For $p,q \in X^n \dslash G$, let $d_I(p,q) = d(p_I,q_I)$, so $d_I \leq d_n = d$ on $X^n \dslash G$.
  \item For $p = [x] \in X^n \dslash G$, $i < n$, and $m \in \bN$, define
    \begin{gather*}
      I(x,i,m) = \bigl\{j < n : d(x_i,x_j) < 2^m \bigr\}.
    \end{gather*}
    This only depends on $p = [x]$, $i$, and $m$, so we may also denote it by $I(p,i,m)$.
  \item For $m \in \bN$, define
    \begin{multline*}
      U^\infty_m = \Bigl\{ (p,q) \in (X^n \dslash G)^2 : d_{I(p,i,m)}(p,q) < 2^{-m} \ \\
      \text{and} \ d_{I(q,i,m)}(p,q) < 2^{-m}\ \text{for all} \ i<n \Bigr\}.
    \end{multline*}
  \item We denote by $\cU^\infty$ the collection of subsets of $(X^n \dslash G)^2$ that contain $U^\infty_m$ for some $m$.
  \end{itemize}
\end{dfn}

\begin{lem}
  \label{lem:InvariantInfinityUniformity}
  Let $G$ be a group acting by isometries on a metric space $(X, d)$ and let $n \in \bN$.
Then:
  \begin{enumerate}
  \item For every $m$, the set $U^\infty_m$ is symmetric.
  \item The sequence $(U^\infty_m)_m$ is decreasing, and the intersection $\bigcap_m U^\infty_m$ is the diagonal of $(X^n \dslash G)^2$.
  \item For every $m$, we have $U^\infty_{m+1} \circ U^\infty_{m+1} \subseteq U^\infty_m$.
  \end{enumerate}
  Consequently, the family $(U^\infty_m)_m$ generates a metrisable uniform structure on $X^n \dslash G$, which is exactly $\cU^\infty$.
  It is coarser than the uniform structure induced by the metric on $X^n \dslash G$, and induces the same topology as the metric.
\end{lem}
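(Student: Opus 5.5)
The plan is to verify the three items directly from \autoref{dfn:InvariantInfinityUniformity}, then derive the final claims from standard facts. Symmetry of $U^\infty_m$ is built into the definition, since both $I(p,i,m)$ and $I(q,i,m)$ appear and $d_I$ is symmetric. For monotonicity, note that $I(p,i,m) \subseteq I(p,i,m+1)$, so $d_{I(p,i,m)} \leq d_{I(p,i,m+1)}$, while $2^{-m-1} < 2^{-m}$; hence $U^\infty_{m+1} \subseteq U^\infty_m$. For the intersection, let $(p,q) \in \bigcap_m U^\infty_m$ with $p = [x]$. Once $2^m > \max_{i,j} d(x_i,x_j)$, we have $I(p,0,m) = n$, so $d(p,q) = d_n(p,q) < 2^{-m}$ for all large $m$, giving $p = q$.

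The main step is item (iii). Let $(p,q),(q,r) \in U^\infty_{m+1}$ and fix $i<n$. We need $d_{I(p,i,m)}(p,r) < 2^{-m}$, together with the symmetric statement for $I(r,i,m)$. First I would show $I(p,i,m) \subseteq I(q,i,m+1)$. Take $j \in I(p,i,m)$, so $d(x_i,x_j) < 2^m$. Since $\{i,j\} \subseteq I(p,i,m+1)$, we get $d_{\{i,j\}}(p,q) < 2^{-m-1}$. Choosing representatives, $q = [y]$ with $d(x_i,y_i),d(x_j,y_j) < 2^{-m-1}$, so $d(y_i,y_j) < 2^m + 2^{-m} \leq 2^{m+1}$, that is, $j \in I(q,i,m+1)$. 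Now $d_I$ is a pseudometric on $X^n \dslash G$, being the orbit-closure distance of restrictions. With $I = I(p,i,m)$, which lies inside both $I(p,i,m+1)$ and $I(q,i,m+1)$, we obtain
\begin{gather*}
  d_I(p,r) \leq d_{I(p,i,m+1)}(p,q) + d_{I(q,i,m+1)}(q,r) < 2^{-m}.
\end{gather*}
The case $I(r,i,m)$ is symmetric, using that $(r,q),(q,p) \in U^\infty_{m+1}$ by item (i).

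With (i)--(iii) in hand, the sets $U^\infty_m$ form a countable base of symmetric entourages with $U_{m+1}^2 \subseteq U_m$ and intersection equal to the diagonal. They therefore generate a Hausdorff uniformity, which is metrisable by the standard metrisation lemma, and the generated filter is exactly $\cU^\infty$. This uniformity is coarser than the metric one because $d(p,q) < 2^{-m}$ implies $d_I(p,q) < 2^{-m}$ for every $I$, so $U^\infty_m$ contains the metric $2^{-m}$-entourage.

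For equality of topologies it remains to show that each metric ball around a fixed $p = [x]$ contains some $U^\infty_m[p]$. Take $m$ with $2^m > \max_{i,j} d(x_i,x_j)$. Then $I(p,0,m) = n$, so $(p,q) \in U^\infty_m$ implies $d(p,q) < 2^{-m}$, and the same holds for every larger $m$. This also gives the last claim.
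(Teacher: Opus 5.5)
Your proposal is correct and follows essentially the same argument as the paper. Both use the inclusion $I(p,i,m) \subseteq I(p,i,m+1) \cap I(q,i,m+1)$, obtained from close representatives, and then the triangle inequality for $d_{I(p,i,m)}$. For the topology, both observe that $U^\infty_m[p]$ lies in the metric $2^{-m}$-ball once $2^m$ exceeds the diameter of a representative of $p$. You are slightly more explicit than the paper about the intersection claim and about deducing the $I(r,i,m)$ case from symmetry.
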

\begin{proof}
  The first two assertions are immediate.
  For the third, let $p = [x]$, $q = [y]$, $r = [z]$, and assume that $(p,q),(q,r) \in U^\infty_{m+1}$.
  Let $i,j < n$, and assume that $j \in I(p,i,m)$, i.e., that $d(x_i,x_j) < 2^m$.
  Then there exists $g \in G$ such that $d\bigl( x_i x_j, g(y_i y_j)\bigr) < 2^{-m-1}$, so $d(y_i, y_j) < 2^m + 1 \leq 2^{m+1}$.
  It follows that
  \begin{gather*}
    I(p,i,m) \subseteq I(p,i,m+1) \cap I(q,i,m+1).
  \end{gather*}
  Therefore,
  \begin{equation*}
    \begin{split}
      d_{I(p,i,m)}(p,r)
      &\leq d_{I(p,i,m)}(p,q) + d_{I(p,i,m)}(q,r) \\
      &\leq d_{I(p,i,m+1)}(p,q) + d_{I(q,i,m+1)}(q,r)
      < 2^{-m}.
    \end{split}
  \end{equation*}
  One shows similarly that $d_{I(r,i,m)}(p,r) < 2^{-m}$, so $(p,r) \in U^\infty_m$.
  Therefore, $U^\infty_{m+1} \circ U^\infty_{m+1} \subseteq U^\infty_m$.

  For the last assertion, note that $\bigl\{ (p,q) : d(p,q) < 2^{-m} \bigr\} \subseteq U^\infty_m$, so $\cU^\infty$ is coarser than the uniform structure defined by the metric on $X^n \dslash G$.
  Conversely, for a fixed $p = [x] \in X^n \dslash G$, if $\max_{i<j} d(x_i,x_j) < 2^m$ and $(p,q) \in U^\infty_m$, then $d(p,q) < 2^{-m}$.
  Therefore $\cU^\infty$ defines the same topology as $d$ on $X^n \dslash G$.
\end{proof}

We defined $\cU^\infty$ on $X^n \dslash G$, which we may pull back to a uniform structure on $X^n$.
A predicate $P\colon X^n \rightarrow \bR$ is $\cU^\infty$-uniformly continuous in this sense if and only if it factors through a function $\overline{P}\colon X^n \dslash G \rightarrow \bR$, and $\overline{P}$ is $\cU^\infty$-uniformly continuos.

\begin{lem}
  \label{lem:Uinfty-implies-propGinv}
  Let $(X, d)$ be a metric space and $G \actson X$ an action by isometries.
  Then every $\cU^\infty$-uniformly continuous bounded function $P \colon X^n \to \bR$ is $d$-uniformly continuous and properly $G$-invariant.
\end{lem}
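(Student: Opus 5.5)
The plan is to treat the two conclusions separately, both by comparing entourages. Since $P$ is $\cU^\infty$-uniformly continuous, it factors through $\overline{P}\colon X^n \dslash G \to \bR$.

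\emph{Uniform continuity.} By \autoref{lem:InvariantInfinityUniformity}, $\cU^\infty$ is coarser than the metric uniformity on $X^n \dslash G$. The quotient map $X^n \to X^n \dslash G$ is $1$-Lipschitz. Hence $P = \overline{P} \circ \pi$ is $d$-uniformly continuous. Concretely, given $\varepsilon$, choose $m$ with $U^\infty_m$ witnessing $\varepsilon$; then $d(x,y) < 2^{-m}$ gives $d([x],[y]) < 2^{-m}$, so $([x],[y]) \in U^\infty_m$. $G$-invariance is also immediate from the factorisation.

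\emph{Proper invariance.} Fix an equivalence relation $\sim$ on $n$, a tuple $x \in X^n$, and a sequence $g^k \in G^{n/{\sim}}$ with $g^k \rightsquigarrow \infty$, relative to a coarsely compatible pseudo-metric $d_L$. Since $\bR$ is complete and $P$ is bounded, it suffices to show that $(P(g^kx))_k$ is Cauchy. For this I will show that for each $m$ there is $k_0$ with $([g^kx],[g^lx]) \in U^\infty_m$ for all $k,l \geq k_0$.

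Set $R = \max_{i,j} d(x_i,x_j)$ and fix a base point $x_0$. By coarse compatibility, $d(g^k_I x_0, g^k_J x_0) \to \infty$ for $I \neq J$. Hence $d(g^k_I x_i, g^k_J x_j) \geq d(g^k_I x_0, g^k_J x_0) - d(x_i,x_0) - d(x_j,x_0) \to \infty$. So for large $k$, all cross-class distances in $g^kx$ exceed $2^m$, while within a class $I$ we have $d((g^kx)_i,(g^kx)_j) = d(x_i,x_j)$. Thus for $k \geq k_0$ and $i$ in class $I$, the set $I(g^kx,i,m)$ is contained in $I$ and does not depend on $k$: it equals $\{j \in I : d(x_i,x_j) < 2^m\}$. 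For such $J' = I(g^kx,i,m) \subseteq I$, the tuples $(g^kx)_{J'} = g^k_I x_{J'}$ and $(g^lx)_{J'} = g^l_I x_{J'}$ lie in the same $G$-orbit, so $d_{J'}([g^kx],[g^lx]) = 0 < 2^{-m}$. The same holds with the roles of $k$ and $l$ swapped, so the pair lies in $U^\infty_m$. Uniform continuity of $\overline{P}$ then makes $(P(g^kx))_k$ Cauchy, hence convergent.

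The main obstacle is this middle step: checking that the index sets $I(\cdot,i,m)$ eventually stabilise inside the $\sim$-classes. This needs the passage from $d_L$-divergence to divergence of the actual coordinates, which is where coarse compatibility is used. The rest is bookkeeping.
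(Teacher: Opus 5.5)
Your proof is correct and follows the same route as the paper. Uniform continuity comes from $\cU^\infty$ being coarser than the metric uniformity (\autoref{lem:InvariantInfinityUniformity}), and proper invariance from showing that the index sets $I([g^kx],i,m)$ eventually lie inside the $\sim$-class of $i$, which makes $([g^kx])_k$ $\cU^\infty$-Cauchy; you simply make explicit two steps the paper leaves implicit, namely the use of coarse compatibility to get divergence of the actual coordinates and the fact that $d_{J}([g^kx],[g^lx]) = 0$ whenever $J$ lies within a single class.
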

\begin{proof}
  By \autoref{lem:InvariantInfinityUniformity}, $P$ is $d$-uniformly continuous.
  To see that it is properly $G$-invariant, fix an equivalence relation $\sim$ on $n$.
  Let $(g^k)_k$ be a sequence of elements of $G^{n / {\sim}}$ such that $g^k \rightsquigarrow \infty$, and let $x \in X^n$.
  Let $p^k = [g^k x]$.
  For every $m$, and every $k$ large enough relative to $m$, the set $I(p^k,i,m)$ is contained in the $\sim$-class of $i$.
  It follows that the sequence $(p^k)_k$ is $\cU^\infty$-Cauchy, so by assumption, $P(g^kx)$ converges.
\end{proof}

We can now show that for a locally $\aleph_0$-categorical structure $M$, the space of all $n$-types of $\Th(M)$ can be constructed explicitly as the $\cU^\infty$-completion of $M^n \dslash \Aut(M)$.
We shall use the following easy and standard fact: if the algebra of uniformly continuous, bounded functions on a metrisable uniform space $X$ is separable, then $X$ is precompact.

\begin{prp}
  \label{prp:LocallyA0CategoricalUniformity}
  Let $M$ be a locally $\aleph_0$-categorical structure equipped with a localising metric, and let $G = \Aut(M)$, $T = \Th(M)$.
  Then the identification of $\fI_n(T)$ with $M^n \dslash G$ extends to an identification of $\tS_n(T)$ with the $\cU^\infty$-completion of $M^n \dslash G$.
\end{prp}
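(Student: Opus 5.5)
The plan is to show that the uniform structure which $\tS_n(T)$ induces on the dense subset $\fI_n(T) = M^n \dslash G$ is exactly $\cU^\infty$. Since $\tS_n(T)$ is compact, it carries a unique compatible uniformity, and a compact space is the completion of any dense subspace with the induced uniformity, so this suffices.

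Density of $\fI_n(T)$ in $\tS_n(T)$ holds by \autoref{thm:LocalRyllNardzewski}, since $T$ has a prime model. I will show the identity $(\fI_n(T),\cU^\infty) \to \fI_n(T) \subseteq \tS_n(T)$ is a uniform isomorphism onto its image, with the induced uniformity on the target. This splits into two steps.

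\textbf{Step 1: induced uniformity $\subseteq \cU^\infty$.} This is equivalent to showing that every definable predicate $P$, viewed as a continuous function on $\tS_n(T)$, is $\cU^\infty$-uniformly continuous on $\fI_n(T)$. This is the main step, and I would argue by contradiction. Suppose there are $m_k \to \infty$ and $(p^k,q^k) \in U^\infty_{m_k}$ with $|P(p^k) - P(q^k)| \geq \varepsilon$. By compactness, pass to a subsequence with $p^k \to p$ and $q^k \to q$ in $\tS_n(T)$; then $P(p) \neq P(q)$, and it suffices to prove $p = q$ using local isolation (\autoref{dfn:LocallyIsolated}).

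\emph{Same locality relation.} Suppose $i \sim_p j$, say $d(x_i,x_j)^p < D$.
\begin{itemize}
\item Then $d(x_i,x_j)^{p^k} < D$ eventually, so $j \in I(p^k,i,m_k)$ for large $k$.
\item Hence $d_{\{i,j\}}(p^k,q^k) < 2^{-m_k}$, and $d(x_i,x_j)^{q^k}$ stays bounded.
\item By lower semicontinuity of $d$ on type space, $d(x_i,x_j)^q < \infty$, i.e.\ $i \sim_q j$.
\end{itemize}
By symmetry (using the conditions involving $I(q^k,i,m)$), the converse holds, so ${\sim_p} = {\sim_q}$.

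\emph{Same restrictions to classes.} Fix a class $I$ and some $i \in I$. The distances within $I$ are bounded along $p^k$, so $I \subseteq I(p^k,i,m_k)$ for large $k$. Therefore $d_I(p^k,q^k) \to 0$ in the type metric. Since $p_I$ and $q_I$ are limits in the logic topology and the type metric is lower semicontinuous, we get $d(p_I,q_I) = 0$, so $p_I = q_I$. Local isolation now gives $p = q$, the desired contradiction.

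\textbf{Step 2: $\cU^\infty \subseteq$ induced uniformity.} By \autoref{lem:Uinfty-implies-propGinv}, every bounded $\cU^\infty$-uniformly continuous function is $d$-uniformly continuous and properly $G$-invariant. By \autoref{thm:LocallyA0CategoricalDefinable} it is therefore definable, i.e.\ it extends continuously to $\tS_n(T)$.

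The algebra of definable predicates is separable, since the language is separable. By the standard fact quoted above, $\cU^\infty$ is therefore precompact, so its completion $\widehat{K}$ is compact. By Step 1, the identity extends to a continuous map $\widehat{K} \to \tS_n(T)$. This map is surjective, because its image is compact and contains a dense set. It is injective, because the bounded uniformly continuous functions on $\widehat{K}$ are exactly the bounded $\cU^\infty$-uniformly continuous functions on $\fI_n(T)$, which are definable by the above and thus factor through $\tS_n(T)$; these separate points of $\widehat{K}$. A continuous bijection between compact Hausdorff spaces is a homeomorphism, which completes the identification.

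The main obstacle is Step 1, specifically the symmetric use of the two clauses in the definition of $U^\infty_m$ to show ${\sim_p} = {\sim_q}$. In particular, one must ensure that a pair at infinite distance in $p$ cannot become finite in $q$.
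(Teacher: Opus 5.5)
Your proof is correct and uses the same three ingredients as the paper:
\begin{enumerate}
\item definability of bounded $\cU^\infty$-uniformly continuous functions, via \autoref{lem:Uinfty-implies-propGinv} and \autoref{thm:LocallyA0CategoricalDefinable};
\item separability of the language, which makes the completion compact;
\item local isolation, which forces limits of $\cU^\infty$-close sequences of isolated types to coincide.
\end{enumerate}
The only difference is the direction of the comparison map. The paper gets a continuous surjection $\tS_n(T) \to \widehat{M^n \dslash G}$ from Gelfand duality and uses the local-isolation argument for its injectivity. You instead use that argument to build $\widehat{M^n \dslash G} \to \tS_n(T)$ and use definability for injectivity, and along the way you spell out the locality and restriction details that the paper states only tersely.
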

\begin{proof}
  Let $\UCB(M^n \dslash G,\cU^\infty)$ denote the algebra of bounded, $\cU^\infty$-uniformly continuous functions on $M^n \dslash G$.
  By \autoref{thm:LocallyA0CategoricalDefinable} and \autoref{lem:Uinfty-implies-propGinv}, we can identify $\UCB(M^n \dslash G,\cU^\infty)$ with a subalgebra of the algebra of continuous functions $C \bigl(\tS_n(T) \bigr)$.
  As the language of $T$ is separable, so is the algebra $C \bigl(\tS_n(T) \bigr)$, and therefore also $\UCB(M^n \dslash G,\cU^\infty)$.
  It follows that the $\cU^\infty$-completion $\widehat{M^n \dslash G}$ is compact; from Gelfand duality, we obtain a continuous, surjective map $\theta \colon \tS_n(T) \to \widehat{M^n \dslash G}$.

  It remains to show that $\theta$ is injective.
  Let $p, q \in \tS_n(T)$ be such that $\theta(p) = \theta(q)$.
  Let $(p^k)_k$ and $(q^k)_k$ be sequences in $M^n \dslash G$ such that $p^k \to p$ and $q^k \to q$ in $\tS_n(T)$.
  Say that $i \sim_p j$ if $\lim_k d(x_i,x_j)^{p_k} = d(x_i,x_j)^p < \infty$, and similarly for $q$.
  Since $\theta(p) = \theta(q)$, the intertwined sequence $(p^k, q^k)_k$ is $\cU^\infty$-Cauchy, so $\sim_p$ and $\sim_q$ coincide.
  Similarly, for each $I \in n / {\sim_p}$, the intertwined sequence $(p^k_I, q^k_I)_k$ is $d$-Cauchy, implying that $p_I = q_I$.
  Using the local isolation of types, we conclude that $p = q$.
\end{proof}


\section{Interpretation in locally $\aleph_0$-categorical structures}
\label{sec:Interpretation}

We turn to questions of interpretability of structures.
For the formal definition of an interpretational expansion and bi-interpretability, we refer the reader to \cite{BenYaacov:LelekGroupoid}, but we briefly recall it here.
Roughly speaking, an \emph{interpreted sort} is a definable subset of a pseudo-metric quotient of a countable Cartesian product of \emph{basic sorts}, where the basic sorts always include the \emph{constant sort} $\{0,1\}$.
An \emph{interpretational expansion} is an expansion that adds an interpreted sort, together with its induced structure, as a new basic sort.
In order to establish that the addition of a new sort is an interpretational expansion, we need to show that, in the expanded structure, the new sort admits a definable bijection with an interpretable sort of the original structure, and that all the newly introduced structure is definable, via this bijection, from the structure on the original sorts.
Two structures are \emph{bi-interpretable} if they admit a common interpretational expansion.

\begin{rmk}
  \label{rmk:InterpretationEquivalenceCompleteTheory}
  A sort can be interpreted in a single structure or in a theory, i.e., uniformly across all its models.
  However, any sort interpretable in a structure is also interpretable in its theory, so for complete theories the two notions are essentially the same.
  To be more precise, if $T = \Th(M)$, then $T$ and $T'$ are bi-interpretable if and only if $M$ is bi-interpretable with some $M'$ such that $T' = \Th(M')$.

  If $M'$ is an interpretational expansion of $M$, then $\Aut(M) \cong \Aut(M')$.
  Therefore, if $M'$ is bi-interpretable with $M$, then $\Aut(M) \cong \Aut(M')$.
  For a similar reason, $M$ is a prime model of its theory if and only if $M'$ is.
\end{rmk}

\begin{dfn}
  \label{dfn:LocalSort}
  Let $T$ be a weakly local theory and let $D$ be an interpretable sort in $T$.
  We say that $D$ is a \emph{local} sort if for every model $M \vDash T$, if $M = \bigsqcup_\alpha M_\alpha$ is its decomposition into local components, then $D^M = \bigsqcup_\alpha D^{M_\alpha}$.
\end{dfn}

\begin{lem}
  \label{lem:LocallyA0CategoricalLocalSort}
  Let $M$ be a locally $\aleph_0$-categorical structure, and let $G = \Aut(M)$.
  Let $D$ be a local interpreted sort of $T = \Th(M)$.
  Then $D^M$, together with its induced structure, is locally $\aleph_0$-categorical as well.
\end{lem}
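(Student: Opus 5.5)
The plan is to apply the Local Ryll-Nardzewski theorem (\autoref{thm:LocalRyllNardzewski}) to the theory $T_D$ of $D^M$ with its induced structure. Concretely, we show that $T_D$ is complete, that all its types are locally isolated, and that the relation $\tp(a,b)$ isolated defines the weakly local structure, with $D^M$ as the unique separable local model. Some bookkeeping is needed, since $D^M$ could be empty or have several ``pieces'', but completeness is automatic: $T_D = \Th(D^M)$.

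The main tool is the correspondence between models. Every model of $T_D$ is (up to isomorphism) of the form $D^N$ for some $N \vDash T$, because the induced structure on $D$ is interpreted uniformly in models of $T$. Since $D$ is local, $D^N = \bigsqcup_\alpha D^{N_\alpha}$, where the $N_\alpha$ are the local components of $N$. Each $N_\alpha \preceq N$, so $D^{N_\alpha} \preceq D^N$. We then define $a \sim_D b$ on $D^N$ to mean that $a,b$ lie in the same $D^{N_\alpha}$.

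Types of tuples in $D$ are controlled by types in the home sort. An element of $D^N$ is a class of a countable tuple from $N$, and by locality it has a representative in a single local component. An elementary map of $D$-sorts is induced by one on representatives. Hence the type of $a = (a_1,\dots,a_n) \in D^N$ is determined by which components the $a_i$ lie in, together with the types of the parts lying in each single component.

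We then derive the weakly local axioms for $(T_D,\sim_D)$ from those of $(T,\sim)$. The map between models of $T$ built componentwise in the proof of \autoref{thm:LocalRyllNardzewski} induces the corresponding map on $D$. This shows that a map of $D$-models preserving components and elementary on each component is elementary.

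For uniqueness, we show that a separable local model $D^{N}$ is isomorphic to $D^M$. First, we would like the local $N$ to be replaceable by a separable local model of $T$, hence by $M$. We take a separable elementary substructure of $N$ containing representatives of a dense subset of $D^N$. Its local component containing these representatives is a separable local model, so it is isomorphic to $M$. Its $D$-sort is all of $D^N$ by density and closedness.

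This step is the main obstacle. One must ensure that $D^{N_0}$ being a single component of $D^N$ forces all the relevant representatives into a single component of $N$, and that $D^N$ is nonempty only for components that matter. The point to check is that if $D^N$ is local (one component), then any two elements have representatives in the same $N_\alpha$. This holds by definition of $\sim_D$ via the decomposition $D^N = \bigsqcup_\alpha D^{N_\alpha}$, but one must handle components $N_\alpha$ with $D^{N_\alpha} = \emptyset$. This is harmless, since all $N_\alpha$ are isomorphic models of the complete theory $T$, so $D^{N_\alpha}$ is either always empty or never empty. If $D^M = \emptyset$, the statement is degenerate.

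Finally, the separability of the induced language on $D$ follows from that of $T$. Hence $(T_D,\sim_D)$ is locally $\aleph_0$-categorical with $D^M$ its unique separable local model, and $D^M$ is locally $\aleph_0$-categorical in the sense of \autoref{dfn:LocallyA0Categorical}.
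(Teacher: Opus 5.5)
There is a genuine gap. Your stated plan, verifying the hypotheses of \autoref{thm:LocalRyllNardzewski} for $T_D$, is the right one and is the paper's. The body of your argument does not carry it out, though. It instead checks \autoref{dfn:WeaklyLocalTheory} and uniqueness directly, and both steps rest on the claim that every model of $T_D$ is, up to isomorphism, of the form $D^N$ with $N \vDash T$. Uniform interpretation gives only the converse, namely that each $D^N$ is a model of $T_D$, and the claim is false even in the $\aleph_0$-categorical case, where every sort is trivially local.

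For a counterexample, let $T$ be the theory of a countable dense linear order without endpoints with a dense and co-dense predicate $P$, and let $D = P$. By quantifier elimination the induced structure on $P$ is just the order, so $T_D$ is the theory of dense linear orders and $(\bR,<) \vDash T_D$. But $(\bR,<)$ is not $P^N$ for any $N \vDash T$. A point $x \in N \setminus P^N$ would determine a cut of $\bR$, which by Dedekind completeness has an endpoint $c \in \bR$. Then no element of $P$ lies strictly between $c$ and $x$, contradicting density of $P$.

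Consequently your verification of the weakly local axioms does not cover what is required. Those axioms quantify over all models of $T_D$ and over all componentwise elementary maps between them. Such maps need not lift to maps between underlying models of $T$; your remark that the componentwise map ``induces the corresponding map on $D$'' goes in the wrong direction. Likewise, your uniqueness argument only treats separable local models of the form $D^N$.

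The second missing ingredient is that your relation ``lie in the same $D^{N_\alpha}$'' is never shown to depend only on $\tp(a,b)$. This is needed for it to be an invariant relation on models of $T_D$ at all. Moreover, by \autoref{dfn:LocallyA0Categorical} it must coincide with joint isolation of types. One direction is easy, since components are atomic. The other is the heart of the paper's proof: if $\tp(a_i,a_j)$ is isolated, realise it in the separable local model $M$ and use $\aleph_1$-saturation of $N$ to embed $M$ elementarily into $N$ over $(a_i,a_j)$. Since $M$ is local, its image lies in one component, and hence so do $a_i$ and $a_j$.

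The paper never quantifies over models of $T_D$. It works in the type spaces $\tS_n(T_D) = \tS_{D^n}(T)$, computed in a saturated model of $T$, and shows three things: $1$-types are isolated, joint isolation equals the component relation, and a type is determined by this pattern together with the component types. This is local isolation, so \autoref{thm:LocalRyllNardzewski} applies. Finally, $D^M$ is the unique separable local model because it is separable and atomic, hence prime.

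Incidentally, the components $N_\alpha$ need not be isomorphic, only elementarily equivalent. That is all your emptiness remark needs.
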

\begin{proof}
  Let $T_D = \Th(D^M)$ in the induced structure.
  The type space $\tS_n(T_D)$ coincides with $\tS_{D^n}(T)$, the space of types of $n$-tuples in $D$, viewed as an interpreted sort in $T$.
  In particular, type isolation in one sense coincides with the other.

  By the definition of a local sort, every $p \in \tS_1(T_D) = \tS_D(T)$ is realised in a local model of $T$, so it must be isolated.
  Similarly, since each member of $D$ is represented in a unique local (and therefore, atomic) component, the joint isolation relation on $D$ is an equivalence relation.

  Let $N \vDash T$ be an $\aleph_1$-saturated model of $T$ and let $p \in \tS_n(T_D)$.
  Then $p$ is realised by some $a \in (D^N)^n$.
  Let $N = \bigsqcup_\alpha N_\alpha$ be the decomposition of $N$ into local components, and for $i < n$, let $\alpha_i$ be the index of the component such that $a_i \in D^{N_{\alpha_i}}$.

  For each $i$, $\tp(a_i)$ is isolated (since $N_{\alpha_i}$ is atomic).
  Similarly, if $\alpha_i = \alpha_j$, then $\tp(a_i,a_j)$ is isolated.
  Conversely, assume that $\tp(a_i,a_j)$ is isolated.
  Then it is realised in $D^{N'}$ for every $N' \vDash T$, and in particular, in the separable local model, say by $(b,b')$.
  Since $N$ is $\aleph_1$-saturated, there exists an elementary embedding $N' \preceq N$ that identifies $(b,b')$ with $(a_i,a_j)$.
  Since $N'$ is local, it embeds in a single local component of $N$, so $\alpha_i = \alpha_j$.

  We conclude that $\alpha_i = \alpha_j$ if and only if $\tp(a_i,a_j)$ is isolated, which we denote by $i \sim_p j$.
  For each class $I$, $a_I \in (D^{N_{\alpha_I}})^I$ for a unique $\alpha_I$.
  If $I \neq J$ are distinct classes, then $\alpha_I \neq \alpha_J$.
  Therefore, $\tp(a)$ is entirely determined by the types $\tp(a_I)$ for $I \in n / {\sim_p}$.
  This proves local isolation of types, so $T_D$ is locally $\aleph_0$-categorical.

  Finally, $D^M$ is separable and atomic, so it is the prime model of $T_D$, and therefore its unique separable local model.
\end{proof}

If $M$ is an unbounded locally $\aleph_0$-categorical structure, a simple example of a non-local interpreted sort is $M^2$.
In this case, the structure $M^2$ is \emph{not} locally $\aleph_0$-categorical.

Let $T$ be a theory, and let $D_0,D_1$ be two interpreted sorts in $T$.
Equip them with definable metrics $d_0$ and $d_1$, both bounded by one.
Let $(D,d)$ be the quotient of $\{0,1\} \times D_0 \times D_1$ by the following definable pseudo-metric:
\begin{gather*}
  d\bigl( (i,a_0,a_1) , (j,b_0,b_1) \bigr) =
  \begin{cases}
    d_i(a_i,b_i) & i = j, \\
    1 & i \neq j.
  \end{cases}
\end{gather*}
Then $D$ can be identified with the disjoint union $D_0 \sqcup D_1$, where each $D_i$ is equipped with the respective metric $d_i$, and the two sets are at distance $1$ from one another.
The projection from $\{0,1\} \times D_0 \times D_1$ to the first coordinate induces a definable predicate on $D$, equal to $i$ on $D_i$, so each $D_i$ is a definable subset of $D$.
The space of types in $(D_0 \sqcup D_1)^n$ is the topological disjoint union of all spaces of types in products of copies of $D_0$ and $D_1$.

\begin{lem}
  \label{lem:Coquand1}
  Let $M_0$ and $M_1$ be locally $\aleph_0$-categorical structures, with automorphism groups $G_i = \Aut(M_i)$.
  Assume that:
  \begin{itemize}
  \item for each $i$, $D_i$ is a local interpretable sort in $M_i$ (on which $G_i$ acts as well);
  \item there exist uniform homeomorphisms $\theta_M\colon M_0 \simeq D_1$ and $\theta_D\colon D_0 \simeq M_1$; and
  \item there exists a group isomorphism $\varphi\colon G_0 \simeq G_1$ which is compatible with $\theta_M$ and $\theta_D$, by which we mean that for $a \in M_0$, $b \in D_0$, and $g \in G_0$:
    \begin{gather*}
      \varphi(g) \cdot \theta_M(a) = \theta_M(ga),
      \qquad
      \varphi(g) \cdot \theta_D(b) = \theta_D(gb).
    \end{gather*}
  \end{itemize}
  Then $M_0$ and $M_1$ are bi-interpretable.
\end{lem}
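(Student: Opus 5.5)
We build a common interpretational expansion: the two-sorted structure $N$ on $M_0 \sqcup D_0$ (equivalently $D_1 \sqcup M_1$, via $\theta_M \sqcup \theta_D$), carrying all predicates definable in $M_0$ (including those on $D_0$) and all predicates definable in $M_1$ (including those on $D_1$), transported along $\theta = \theta_M^{-1} \sqcup \theta_D$. The goal is to show that $N$ is an interpretational expansion of $M_0$. By symmetry it is then one of $M_1$ as well, which gives bi-interpretability. Since $D_0$ is interpretable in $M_0$, it suffices to show that every predicate on products of $M_0$ and $D_0$ that comes from $M_1$ is already definable in $M_0$. Applying the same argument with the roles reversed gives the other side.

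The key tool is the sort $E = M_0 \sqcup D_0$, taken as an interpreted sort in $M_0$ as in the construction preceding the lemma, with a bounded metric. Better, we equip it with a metric that is localising on each piece and that combines the two pieces through a definable coupling. Both $M_0$ and $D_0$ are local sorts, so I expect $E$ to be a local sort as well: its realisations in a model split along the local components of that model. If so, \autoref{lem:LocallyA0CategoricalLocalSort} shows that $E^{M_0}$ is locally $\aleph_0$-categorical with automorphism group $G_0$. Its definable predicates are then described by \autoref{thm:LocallyA0CategoricalDefinable}: they are exactly the uniformly continuous, properly $G_0$-invariant predicates.

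Next take a predicate $P$ on $M_1^a \times D_1^b$ that is definable in $M_1$, and transport it to $Q = P \circ \theta$ on $D_0^a \times M_0^b \subseteq E^{a+b}$. Then:
\begin{itemize}
\item $Q$ is uniformly continuous, because $\theta_M$ and $\theta_D$ are uniform homeomorphisms.
\item $Q$ is $G_0$-invariant by the compatibility assumption on $\varphi$.
\item For proper invariance, we need to know that $\varphi$ matches the coarse structures. Since $\varphi$ respects the actions, it is a homeomorphism: the topologies are those of pointwise convergence on $M_0 \cong D_1$, and $G_1$ carries the same topology from $D_1$ as from $M_1$ because $D_1$ is interpretable and $M_1$ is recovered from $D_1$. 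Coarse boundedness is intrinsic to a topological group, so $\varphi$ preserves it. By \autoref{thm:LocallyA0CategoricalDefinable} in $M_1$, $P$ is properly $G_1$-invariant, and transporting along $\varphi$ shows that $Q$ is properly $G_0$-invariant.
\end{itemize}
One point needs care here. Proper invariance is defined relative to a coarsely compatible pseudo-metric for the action on the given space. The actions of $G_0$ on $D_0$ and on $M_0$ are both coarsely proper, the first because $D_0$ is local and so its own localising metric is coarsely proper. Hence a coarsely proper metric on $G_0$ is coarsely compatible with both actions, and the notion of $g^k \rightsquigarrow \infty$ is the same throughout. Applying the definability characterisation in $E^{M_0}$ then shows that $Q$ is definable in $E$, and hence definable in $M_0$ as a predicate on the interpreted sort.

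I expect the main obstacle to be checking that $E$ (or a suitable interpreted sort built from $M_0$ and $D_0$) really is a local, locally $\aleph_0$-categorical sort, with a localising metric compatible with both pieces. I would first try to define the cross-distance $\inf_z \bigl( d(a,z) + \rho(z,b) \bigr)$ for some definable coupling $\rho$, and, failing that, avoid the issue by applying \autoref{thm:LocallyA0CategoricalDefinable} separately for each pair of arities. A second, smaller point is that the uniform homeomorphism $\theta_M$ must make the metrics uniformly equivalent, so that the transported structure on $M_0$ is definable with respect to the right uniformity. After that, the remaining clauses of an interpretational expansion (definable bijection with an interpretable sort, induced structure definable) follow from the definability of the graph of $\theta$, which is itself a predicate of the type just treated.
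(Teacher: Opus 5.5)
Your proposal is correct and is essentially the paper's argument. The paper forms the same disjoint-union sorts $M_i \sqcup D_i$ and observes that they are local, hence locally $\aleph_0$-categorical by \autoref{lem:LocallyA0CategoricalLocalSort}. It then concludes that they are bi-definable via $\theta_M \sqcup \theta_D$ by \autoref{cor:LocallyA0CategoricalDefinitionEquivalent}, whose proof is exactly your unpacked application of \autoref{thm:LocallyA0CategoricalDefinable}, together with the matching of topologies and coarse structures under $\varphi$. The obstacle you anticipate does not arise: locality of $E$ is immediate from \autoref{dfn:LocalSort}, since $E^N = \bigsqcup_\alpha \bigl(N_\alpha \sqcup D_0^{N_\alpha}\bigr)$, and a localising metric on $E$ then exists by \autoref{thm:DefinableMetrisation}, so no definable coupling is needed. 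Coarse properness of $G_0 \curvearrowright D_0$ is best obtained by restricting the coarsely proper action $G_0 \curvearrowright E$ given by \autoref{cor:LocallyA0CategoricalAutGroup}, rather than from $D_0$ alone.
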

\begin{proof}
  Construct the disjoint unions $M_i \sqcup D_i$ for $i = 0,1$ as above.
  Then each is a local interpretable sort in the respective $M_i$, and therefore, by \autoref{lem:LocallyA0CategoricalLocalSort}, a locally $\aleph_0$-categorical structure in its own right.
  Moreover, $M_i \sqcup D_i$ is an interpretational expansion of $M_i$.
  Identifying the underlying sets of $M_0 \sqcup D_0$ and $D_1 \sqcup M_0$ via $\theta_M \sqcup \theta_D$, by \autoref{cor:LocallyA0CategoricalDefinitionEquivalent}, these two structures are bi-definable.
  Thus we have constructed a common interpretational expansion of $M_0$ and of $M_1$, whence the conclusion.
\end{proof}

\begin{rmk}
  \label{rmk:CompactSetInterpretation}
  Let $K$ be a compact metric space.
  As $K$ can be represented as the continuous image of $\set{0, 1}^\bN$, it is interpretable in any structure using only the constant sort $\{0,1\}$.
  Since automorphisms act as the identity on $\{0,1\}$, they also act as the identity on $K$.
  If $T$ is a local theory, $M \vDash T$ and $M = \bigsqcup_\alpha M_\alpha$ is the decomposition of $M$ into local components, then $M \times K$ decomposes as $\bigsqcup_\alpha (M_\alpha \times K)$.
  Therefore, if $M$ is locally $\aleph_0$-categorical, then $M \times K$ is a local sort in the sense of \autoref{dfn:LocalSort}, and therefore, with the induced structure, is locally $\aleph_0$-categorical as well.
\end{rmk}

\begin{lem}
  \label{lem:Coquand2}
  Let $M$ be locally $\aleph_0$-categorical, and let $G = \Aut(M)$.
  Equip $G$ with a compatible, coarsely proper, left-invariant metric $d_L$, and let $\widehat{G}_L$ denote the completion.
  Let the structure $\widehat{G}_L^G$ be as in \autoref{dfn:GroupActionStructure}, relative to the left translation action $G \curvearrowright \widehat{G}_L$.

  Then $\widehat{G}_L^G$ is locally $\aleph_0$-categorical, and it is bi-interpretable with $M$.
\end{lem}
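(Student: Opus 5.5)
The first assertion, that $\widehat{G}_L^G$ is locally $\aleph_0$-categorical, is a direct application of \autoref{cor:GroupActionStructureLFromGroup}. By \autoref{cor:LocallyA0CategoricalAutGroup}, $G$ is a Polish, locally Roelcke precompact group, and $d_L$ is a coarsely proper, compatible, left-invariant metric. For bi-interpretability, the plan is to apply \autoref{lem:Coquand1} with $M_0 = M$ and $M_1 = \widehat{G}_L^G$. Both groups are $G$ (using $\Aut(\widehat{G}_L^G) = G$ from \autoref{prp:GroupActionStructureMinimal}), and $\varphi$ is the identity. It therefore remains to find a local sort $D_1$ in $\widehat{G}_L^G$ uniformly homeomorphic to $M$, and a local sort $D_0$ in $M$ uniformly homeomorphic to $\widehat{G}_L$, with both homeomorphisms $G$-equivariant.

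\emph{Constructing $D_0$ in $M$.} By \autoref{lem:CoCompact}, $M \dslash G$ is compact, and I will choose a countable dense sequence $a = (a_k)_k$ in $M$. By homogeneity of the prime model, the map $g \mapsto ga$ identifies $G$, up to uniform equivalence of the left uniformity, with the orbit $Ga \subseteq M^{\bN}$ under a weighted product metric such as $\sum_k 2^{-k}\min(d(x_k,y_k),1)$. The closure $\overline{Ga} = [a]$ is the set of realisations of the type of $a$ in $M$; equivalently, it is the set of images of $a$ under elementary self-embeddings, which is $\widehat{G}_L$. One must check that this set is definable. I would write it as a countable intersection of the isolated types $\tp(a_0,\dots,a_{n-1})$, controlled by the distance predicates $d(x,p)$ of \autoref{fct:IsolatedTypeDistance}, which is the standard argument from the $\aleph_0$-categorical case. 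Locality of $D_0$ holds because a realisation of this type in any model has all its coordinates at finite distance from one another, so it lies in a single local component. A subtlety is that the product-metric uniformity induced from $M^{\bN}$ has to agree uniformly with $d_L$, and not only topologically. I would handle this by replacing $d_L$ by a coarsely equivalent metric. This is legitimate because \autoref{cor:LocallyA0CategoricalDefinitionEquivalent} and coarse equivalence make the structure $\widehat{G}_L^G$ essentially independent of the choice; alternatively, one can take $d_L(g,h) = d(ga_0,ha_0) + \sum_k 2^{-k}\min(d(ga_k,ha_k),1)$ from the outset, which is coarsely proper since the action on $M$ is.

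\emph{Constructing $D_1$ in $\widehat{G}_L^G$.} I want a uniform $G$-equivariant homeomorphism onto $M$. By \autoref{lem:CoCompact}, there is a compact $K \subseteq M$ meeting every orbit closure. The map $\widehat{G}_L \times K \to M$, $(u,k) \mapsto uk$, is surjective and equivariant by \autoref{fct:GroupActionCompletion}. Following \autoref{rmk:CompactSetInterpretation}, $\widehat{G}_L \times K$ is a local sort in $\widehat{G}_L^G$. The required $D_1$ is then the quotient of $\widehat{G}_L \times K$ by the pseudo-metric $\rho\bigl((u,k),(v,l)\bigr) = d(uk,vl)$, and this quotient is isometric to $M$.

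\emph{Main obstacle.} The key difficulty is showing that $\rho$ is definable in $\widehat{G}_L^G$, and I would use \autoref{thm:LocallyA0CategoricalDefinable} for this. The predicate $\rho$ is invariant under the diagonal action of $G$, it is uniformly continuous on bounded pieces, and it is properly invariant: when the two components diverge, coarse properness of $G \curvearrowright M$ forces $\rho \to \infty$. Two points need care. First, uniform continuity of $u \mapsto uk$ must hold uniformly in $k \in K$; this follows from compactness of $K$ and the continuity modulus in \autoref{fct:GroupActionCompletion}. Second, $\rho$ is unbounded, so I would work with the truncations $\min(\rho,N)$ together with the limiting behaviour at infinity. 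Once $\rho$ is definable, the quotient is a local interpretable sort isometric to $M$, and \autoref{lem:Coquand1} concludes the proof.
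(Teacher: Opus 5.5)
Your proposal is correct and follows the paper's proof essentially step for step. The orbit closure $[\xi]$ of a dense sequence in $M^{\bN}$ gives the copy of $\widehat{G}_L$ inside $M$; the pseudo-metric $d(uk,vl)$ on $\widehat{G}_L\times K$ is shown definable via \autoref{rmk:CompactSetInterpretation} and \autoref{thm:LocallyA0CategoricalDefinable}, with coarse properness controlling the behaviour at infinity; and \autoref{lem:Coquand1} concludes.

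Two small remarks. First, the paper takes the \emph{complete} quotient of $\widehat{G}_L\times K$, so density of the points $ga_t$ is enough. Your claim that $(u,k)\mapsto uk$ is onto $M$ does not follow from \autoref{fct:GroupActionCompletion} alone, and with the complete quotient you do not need it. Second, your worry about uniform agreement with $d_L$ is unnecessary: $g\mapsto g\xi$ is a uniform homeomorphism for any compatible left-invariant $d_L$, since all such metrics are uniformly equivalent.
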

\begin{proof}
  We have a natural identification $G = \Aut(M) = \Aut(\widehat{G}_L^G)$.
  We may assume that $M$ is equipped with a localising metric.

  We may view $M^\bN$ as an interpreted sort in $M$ and equip it with a compatible, definable metric, for example,
  \begin{gather*}
    d(a,b) = \sup_{i \in \bN} \, \bigl( 2^{-i} \wedge d(a_i,b_i) \bigr).
  \end{gather*}
  Fix $\xi \in M^\bN$ that enumerates a dense subset of $M$ and let $p = \tp(\xi)$.
  Since $M$ is prime, $p$ is isolated, so $D = [\xi]$ a definable set.
  The map $G \rightarrow G \xi$, $g \mapsto g\xi$ is a uniform homeomorphism.
  Therefore it extends to a uniform homeomorphism $\widehat{G}_L \rightarrow D$ between the completions.

  Next we interpret $M$ in the structure $\widehat{G}_L^G$ (which is locally $\aleph_0$-categorical by \autoref{cor:GroupActionStructureLFromGroup}).
  By \autoref{lem:CoCompact}, there exists a compact set in $M$ that meets every orbit closure in $M \dslash G$.
  Let us write this set as $\{a_t : t \in K\}$, where $K$ is a compact space and $t \mapsto a_t$ is a homeomorphic embedding into $M$.
  Define a pseudo-metric $\tilde{d}$ on $G \times K$ by
  \begin{gather*}
    \tilde{d}\bigl( (g,t), (h,s) \bigr) = d^M(g a_t, h a_s).
  \end{gather*}
  It is uniformly continuous and therefore extends to a uniformly continuous pseudo-metric on $\widehat{G}_L \times K$.
  As the action $G \curvearrowright (\widehat{G}_L \times K)$ is trivial on the second coordinate, $\tilde{d}$ is $G$-invariant.
  By the coarse properness of the action, if $f_k \rightarrow \infty$ in $G$, then $\tilde{d}\bigl( (g,t), (f_kh,s) \bigr) \rightarrow \infty$, so $\tilde{d}$ is properly $G$-invariant.
  By \autoref{rmk:CompactSetInterpretation}, the product $\widehat{G}_L \times K$, equipped with the induced structure from $\widehat{G}_L^G$, is locally $\aleph_0$-categorical.
  Therefore, by \autoref{thm:LocallyA0CategoricalDefinable}, $\tilde{d}$ is definable (in $\widehat{G}_L^G \times K$, and therefore in $\widehat{G}_L^G$).

  Let $E$ denote the \emph{complete quotient} of $(\widehat{G}_L \times K, \tilde{d})$, i.e., the completion of the quotient by the zero-distance equivalence relation.
  The map $G \times K \rightarrow M$ sending $(g,t) \mapsto ga_t$ has dense image (by the choice of $K$), and induces an isometric bijection $E \simeq M$.

  We have constructed the interpreted sets $D$ in $M$ and $E$ in $\widehat{G}_L^G$, as well as uniform isomorphisms $\theta_D\colon \widehat{G}_L \simeq D$ and $\theta_E\colon E \simeq M$.
  Consider $g,h \in G$ and $t \in K$.
  Then
  \begin{gather*}
    g \theta_D(h) = gh \xi = \theta_D(gh),
    \qquad
    g \theta_E\bigl( [h,t] \bigr) = gha_t = \theta_E\bigl( g [h,t] \bigr).
  \end{gather*}
  By density, the same holds everywhere on $\widehat{G}_L$ or $E$.
  We conclude using \autoref{lem:Coquand1}.
\end{proof}

From this, we deduce an analogue of Coquand's theorem for locally $\aleph_0$-categorical structures.
\begin{thm}
  \label{thm:LocalCoquand}
  Let $M_0$ and $M_1$ be locally $\aleph_0$-categorical structures.
  Then $M_0$ is bi-interpretable with $M_1$ if and only if $\Aut(M_0) \cong \Aut(M_1)$ as topological groups.
\end{thm}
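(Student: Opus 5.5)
The forward direction is the content of \autoref{rmk:InterpretationEquivalenceCompleteTheory}: if $M_0$ and $M_1$ admit a common interpretational expansion $M'$, then $\Aut(M_0) \cong \Aut(M') \cong \Aut(M_1)$. One should check that these isomorphisms are topological and not merely algebraic. Each basic sort of $M'$ is either an original sort or an interpreted one, that is, a definable subset of a complete quotient of a countable product of original sorts. The pointwise-convergence topology on the new sort is therefore controlled by the topology on the old ones, and conversely. So restriction to the original sorts is a homeomorphic group isomorphism.

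For the converse, let $\varphi\colon \Aut(M_0) \to \Aut(M_1)$ be an isomorphism of topological groups and write $G = \Aut(M_0)$. Choose a compatible, coarsely proper, left-invariant metric $d_L$ on $G$, which exists by \autoref{fct:CoarselyProperMetric} and \autoref{cor:LocallyA0CategoricalAutGroup}. Transport it via $\varphi$ to a metric $d_L' = d_L \circ (\varphi^{-1} \times \varphi^{-1})$ on $G_1 = \Aut(M_1)$. This metric is again compatible, left-invariant and coarsely proper, since $\varphi$ is a topological isomorphism and coarse boundedness is defined intrinsically from the topological group structure.

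Then $\varphi$ extends to an isometric bijection $\widehat{G}_L \to \widehat{(G_1)}_L$ between the left completions. It intertwines the left translation actions and maps $G$-orbit closures in $\widehat{G}_L^n$ to $G_1$-orbit closures. Hence it carries each predicate $D_a$ of $\widehat{G}_L^G$ to the predicate $D_{\varphi(a)}$ of $\widehat{(G_1)}_L^{G_1}$. In other words, $\varphi$ induces an isomorphism of structures $\widehat{G}_L^G \cong \widehat{(G_1)}_L^{G_1}$, after renaming the language.

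By \autoref{lem:Coquand2}, $M_0$ is bi-interpretable with $\widehat{G}_L^G$, and $M_1$ is bi-interpretable with $\widehat{(G_1)}_L^{G_1}$, the latter built with the metric $d_L'$. Composing, $M_0$ is bi-interpretable with $M_1$. This uses transitivity of bi-interpretability, which holds because a common interpretational expansion of $M_0$ and $N$, together with one of $N$ and $M_1$, can be amalgamated over $N$ by adding both families of sorts. All of these sorts are interpretable in $N$, hence in $M_0$ and in $M_1$.

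The main obstacle is keeping track of how \autoref{lem:Coquand2} depends on the choice of $d_L$. The structure $\widehat{G}_L^G$ depends a priori on that choice, so I fix it once on $G$ and transport it to $G_1$, rather than choosing metrics independently on each side. Alternatively, one can note that any two coarsely proper compatible left-invariant metrics are uniformly and coarsely equivalent. This gives the same completion as a uniform space, and by \autoref{cor:LocallyA0CategoricalDefinitionEquivalent} the resulting structures are bi-definable. Either route removes the dependence.
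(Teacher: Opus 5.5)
Your proof is correct and follows the paper's argument. The forward direction uses the automorphism group of a common interpretational expansion. For the converse, you bi-interpret each $M_i$ with $\widehat{G}_L^G$ via \autoref{lem:Coquand2} and combine everything into a three-sorted common expansion, which is exactly the paper's proof; your explicit transport of $d_L$ along $\varphi$ only spells out what the paper does when it identifies $\Aut(M_0)$ with $\Aut(M_1)$.
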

\begin{proof}
  If the structures are bi-interpretable, then, up to an isomorphism, they admit a common interpretational expansion $(M_0,M_1)$, in which case $\Aut(M_0) \cong \Aut(M_0,M_1) \cong \Aut(M_1)$.
  For the converse, let us identify $G = \Aut(M_0) = \Aut(M_1)$.
  Each $M_i$ is bi-interpretable with $\widehat{G}_L^G$, so all three structures admit a common interpretational expansion (in the three sorts $(M_0,M_1,\widehat{G}_L)$), so $M_0$ and $M_1$ are bi-interpretable.
\end{proof}

\begin{cor}
  \label{c:interpr-implies-coarse-eq}
  Suppose that $M_0$ and $M_1$ are locally $\aleph_0$-categorical structures that are bi-interpretable.
  Let $d_i$ be a localising metric on $M_i$ for $i = 0,1$.
  Then the metric spaces $(M_0, d_0)$ and $(M_1, d_1)$ are coarsely equivalent.
\end{cor}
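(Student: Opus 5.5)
By \autoref{thm:LocalCoquand}, bi-interpretability gives an isomorphism of topological groups $G = \Aut(M_0) \cong \Aut(M_1)$, and I identify the two groups via this isomorphism. The plan is to show that each $(M_i, d_i)$ is coarsely equivalent to $G$ equipped with a coarsely proper, compatible, left-invariant metric $d_L$; this metric exists by \autoref{fct:CoarselyProperMetric}, since $G$ is locally Roelcke precompact by \autoref{cor:LocallyA0CategoricalAutGroup}. The coarse structure of $G$ (its coarsely bounded sets) is intrinsic to the topological group, so any two coarsely proper left-invariant metrics on $G$ are coarsely equivalent. Composing the two resulting equivalences $(M_0,d_0)\simeq (G,d_L)\simeq (M_1,d_1)$ then gives the claim.

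Fix $i$ and write $M = M_i$, $d = d_i$. By \autoref{cor:LocallyA0CategoricalAutGroup}, the action $G \curvearrowright (M,d)$ is coarsely proper and $M \dslash G$ is compact. By \autoref{lem:CoCompact}, there is a compact set $K \subseteq M$ meeting every orbit closure. Fix $x \in M$ and consider the orbit map $\omega\colon G \to M$, $g \mapsto gx$. I will verify the following four properties.

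\begin{enumerate}
  \item \emph{Coarse density.} Every $y \in M$ satisfies $d(y, gk) < 1$ for some $k \in K$ and $g \in G$, and $d(gk, gx) = d(k,x) \leq R := \sup_{k\in K} d(k,x) < \infty$. Hence the image of $\omega$ is $(R+1)$-dense.
  \item \emph{Bornologous.} The quantity $d(gx,hx) = d(x, g^{-1}hx)$ depends only on $g^{-1}h$. Given $r > 0$, the set $\{f : d_L(1,f) \le r\}$ is coarsely bounded. The function $f \mapsto d(x,fx)$ is then bounded on it: its sublevel sets $\{f : d(x,fx) < n\}$ are open, symmetric, cover $G$, and satisfy $U_n^2 \subseteq U_{2n}$, so \autoref{dfn:GroupBoundedSet} applies (after reindexing). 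Alternatively, this follows from the fact that $f \mapsto d(x,fx)$ is a continuous left-invariant pseudo-metric on $G$.
  \item \emph{Expanding.} If $d(gx,hx) \le r$, then $g^{-1}h$ lies in $\{f : d(x,fx)\le r\}$, which is coarsely bounded by coarse properness of the action, hence $d_L$-bounded because $d_L$ is coarsely proper.
  \item \emph{Conclusion.} Properties (i)–(iii) together say that $\omega$ is a coarse equivalence.
\end{enumerate}

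The main point requiring care is checking that the notions of "coarsely proper" for the action and for $d_L$ refer to the same ideal of coarsely bounded sets of $G$. This is guaranteed because the topological isomorphism from \autoref{thm:LocalCoquand} preserves coarse boundedness, which is defined purely topologically in \autoref{dfn:GroupBoundedSet}. Everything else is routine.
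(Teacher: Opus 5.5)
Your proof is correct and follows the paper's route. The paper argues in one line that each action $G \actson (M_i,d_i)$ is coarsely proper (by \autoref{cor:LocallyA0CategoricalAutGroup}) and cobounded (since $M_i \dslash G$ is compact), hence each $(M_i,d_i)$ is coarsely equivalent to $G$; you write out the orbit-map verification it leaves implicit, with one cosmetic slip: the left-invariant pseudo-metric in step (ii) is $(g,h) \mapsto d(gx,hx)$, and $f \mapsto d(x,fx)$ is only its length function.
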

\begin{proof}
  Let $G = \Aut(M_0) \cong \Aut(M_1)$.
  By \autoref{cor:LocallyA0CategoricalAutGroup}, the actions $G \actson M_i$ are coarsely proper.
  They are also cobounded (since $M_i \dslash G$ is compact), and therefore each of the metric spaces $(M_i, d_i)$ is coarsely equivalent to $G$.
\end{proof}


\section{Examples}
\label{sec:examples}

We start with a natural family of examples that have already appeared.
Assume that $G$ is a Polish, locally Roelcke precompact group.
Let $d_L$ be a compatible, coarsely proper, left-invariant metric, and for $g \in G$, let $R_g\colon x \mapsto xg$ denote the associated right translation map.
By \autoref{rmk:GroupActionStructureAlternateLanguage}, the structure $(\widehat{G}_L,d_L,R_g)_{g \in G}$ is bi-definable with $\widehat{G}_L^G$ of \autoref{dfn:GroupActionStructure}, which, by \autoref{cor:GroupActionStructureLFromGroup}, is locally $\aleph_0$-categorical.
In addition:
\begin{itemize}
\item If $S \subseteq G$ generates a dense subgroup, we lose nothing when considering $\widehat{G}_L$ in the sublanguage $(d_L,R_g)_{g \in S}$.
\item When $G$ is locally compact, $d_L$ is complete, so $\widehat{G}_L = G$, and we obtain the structure $(G,d_L,R_g)_{g \in S}$.
\item When $G$ is a discrete, countable group, we may drop $d_L$ in favour of the equality relation, since $d(g_n,h_n) \rightarrow \infty$ if and only if $h_n^{-1}g_n$ leaves every finite set, and this is definable using equality and the right-translation maps.

\item Therefore, when $G$ is finitely generated and $S$ is a generating set, we obtain a locally $\aleph_0$-categorical structure $(G,R_g)_{g \in S}$, which is the labelled right Cayley graph of $G$.
  In that case, the localising metric is the word metric on $G$.
\end{itemize}

\begin{exm}
  \label{exm:ZSucc}
  When $G = \bZ$ and $S = \{1\}$, we obtain the locally $\aleph_0$-categorical structure $(\bZ,s)$, where $s$ is the successor map.
\end{exm}

\begin{exm}
  \label{exm:ZOrd}
  For a very similar non-example, consider the structure $(\bZ,\leq)$, in which the usual metric on $\bZ$ is definable as a $[0,\infty]$-valued predicate.
  This is a proper metric structure whose automorphism group is isomorphic to $\bZ$.
  However, $(\bZ,\leq)$ is not locally $\aleph_0$-categorical, since its theory admits a unique $1$-type but two non-isolated $2$-types, namely $x \ll y$ and $x \gg y$.
\end{exm}

A reduct of an $\aleph_0$-categorical theory is again $\aleph_0$-categorical.
The following show that a general reduct of a locally $\aleph_0$-categorical theory or structure need not be one.

\begin{exm}
  \label{exm:ZBiColour}
  Consider the action of $2\bZ \curvearrowright \bZ$ by translation, and let $M = \bZ^{2\bZ}$ (in the sense of \autoref{dfn:GroupActionStructure}).
  By \autoref{thm:GroupActionStructure}, $M$ is locally $\aleph_0$-categorical, and its theory has quantifier elimination.
  From this it is straightforward to check that $M$ is bi-definable with $(\bZ,s,P)$, where $P$ is a unary predicate for the even numbers.
  We may also define in it an equivalence relation $Q(x,y)$, which holds when $x$ and $y$ have the same parity.

  Consider the structure $M' = (\bZ,s,Q)$.
  It is a reduct of $M$, and the localising metric of $M$ is definable in $M'$ (since it is definable using $s$ and equality).
  However, $M'$ has only one $1$-type but two distinct $2$-types at infinite distance: one where $Q(x, y)$ hods and one where it does not.
  Therefore, $M'$ is not locally $\aleph_0$-categorical and neither is its theory.
\end{exm}

We show that a reduct of a locally $\aleph_0$-categorical structure is also locally $\aleph_0$-categorical under additional hypotheses.
In fact, these hypotheses imply that the structure and its reduct are bi-definable, so the applicability is more restricted.
Nonetheless, this allows us to produce important examples, first as structures of the form $M^G$, and then, using the following theorems, as structures in a more natural language.

\begin{thm}
  \label{thm:LocallyA0CategoricalReduct}
  Let $M$ be a locally $\aleph_0$-categorical structure, equipped with a localising metric, and let $M'$ be a reduct of $M$.
  Assume that $M$ and $M'$ share the same metric and automorphism group.
  Then $M'$ is locally $\aleph_0$-categorical, and is bi-definable with $M$.
\end{thm}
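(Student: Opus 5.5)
The plan is to show that $M'$ satisfies the hypotheses of \autoref{thm:GroupActionStructure}, so that $M'^G$ is locally $\aleph_0$-categorical. We then obtain $M'$ itself by showing that every predicate of $M'^G$ is definable in $M'$. Write $G = \Aut(M) = \Aut(M')$ and let $d$ be the common metric; by hypothesis $d$ is a localising metric for $M$, and it is definable in $M'$.

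\textbf{Step 1 (orbit spaces).} The spaces $M^n \dslash G$ depend only on $(M,d)$ and $G$. By \autoref{cor:LocallyA0CategoricalAutGroup}, $M \dslash G$ is compact and every $M^n \dslash G$ is proper. Hence \autoref{thm:GroupActionStructure} applies to the action $G \actson (M,d)$: the structure $M^G$ is locally $\aleph_0$-categorical, and $d$ is a localising metric for it. By \autoref{cor:LocallyA0CategoricalDefinitionEquivalent}, $M$ and $M^G$ are bi-definable.

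\textbf{Step 2 (the key point).} The main obstacle is to show that each predicate $D_a(x) = d(x, Ga)$ is definable in $M'$. Then $M'$ and $M^G$ are bi-definable, because $M'$ is a reduct of $M$, which is bi-definable with $M^G$. The example $(\bZ,s,Q)$ shows why this is delicate: it is not enough to know that $M'$ has the right automorphism group and metric. In that example, however, the reduct has a strictly larger automorphism group. Here $\Aut(M') = G$, so I would argue via types of $T' = \Th(M')$, as follows.

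\begin{enumerate}
\item Show that $M'$ is atomic, so that $\tp^{M'}(a)$ is isolated for each $a \in M^n$. By \autoref{prp:GroupActionStructureMinimal}, $D_a$ is then the distance to the set of realisations of $\tp^{M'}(a)$. That set contains $[a]$, and I would show it equals $[a]$ as follows. Realisations of $\tp^{M'}(a)$ in $M$ lie at finite $d$-distance configurations with the same $M'$-type. Using the definability of $d$ in $M'$ and the properness of $M^n \dslash G$, one approximates a realisation $b$ by a back-and-forth inside $M'$. The aim is to build an automorphism of $M'$ that sends $a$ close to $b$. Since $\Aut(M') = G$, this gives $b \in [a]$.
\item To run that back-and-forth, I would first show that $M'$ is homogeneous and atomic using a compactness argument in $\tS_n(T')$. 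The argument goes through the map $\tS_n(T) \to \tS_n(T')$. This map is continuous and surjective, and it is injective on $\fI_n(T)$ exactly when the claim holds.
\end{enumerate}

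\textbf{Alternative route for Step 2.} A cleaner approach, which I would try first, is to verify directly that $T'$ has locally isolated types, with $a \sim b$ iff $d(a,b) < \infty$. Then $M'$ is locally $\aleph_0$-categorical by \autoref{thm:LocalRyllNardzewski}, since it is the separable local model. Once that is known, \autoref{cor:LocallyA0CategoricalDefinitionEquivalent} applies to $M$ and $M'$, which share the same set, automorphism group and metric, and yields bi-definability.

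To check local isolation, let $\pi\colon \tS_n(T) \to \tS_n(T')$ be the restriction map. Given $q \in \tS_n(T')$, take a preimage $p$. The $d$-distances, and hence $\sim_p$, are determined by $q$, because $d$ is definable in $M'$.

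The crux is then to show that $\pi$ is injective. Suppose $p \ne p'$ have the same image. By local isolation in $T$, they must differ on some finite-distance block, where the types are isolated and hence realised in $M$. So there are $a, b \in M^I$ with $\tp^{T'}(a) = \tp^{T'}(b)$ but $b \notin [a]$.

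I would rule this out by applying a Ryll-Nardzewski-type argument to the compact fibre of $\pi$. Fix $r$ with $d(a_i,a_j) < r$ for all $i,j \in I$. The set of $G$-orbit closures of tuples with mutual distances at most $r$ is compact by properness, and it carries two topologies: the one from $M'$-types and the one from $d$ on $M^I \dslash G$. I would show that these two topologies coincide. The $M'$-type topology is Hausdorff and coarser, so coincidence follows on a compact set once the $d$-topology is shown to refine it. That is the main step, and I expect it to require $\Aut(M') = G$ essentially: one shows that $M'$ is $\omega$-homogeneous for $d$-close realisations, which forces the $M'$-type fibres to be single orbit closures.
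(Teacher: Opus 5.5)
There is a genuine gap. Your Step~1 and the endgame of your main route (show $M'$ is atomic, then apply \autoref{prp:GroupActionStructureMinimal} using $\Aut(M')=G$) match the paper. However, the step that carries all the content is only asserted (``a compactness argument in $\tS_n(T')$ through $\pi$''). That step is: every $T'$-type realised in $M'$ is isolated. The back-and-forth you sketch in (i) presupposes it, since building automorphisms step by step needs isolated types to supply approximate realisations.

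The missing argument runs as follows.
\begin{enumerate}
\item For $R>0$, let $B_R\subseteq\tS_n(T)$ be the set of types with $d(x_i,x_j)<R$ for all $i,j$. Since $d$ is localising, $B_R$ is a bounded subset of $\fI_n(T)=M^n\dslash G$. By properness, $\overline{B}_R$ is metrically compact.
\item The restriction map $\pi\colon\tS_n(T)\to\tS_n(T')$ is $1$-Lipschitz for the type metrics. So $\pi(\overline{B}_R)$ is metrically compact, and on it the metric topology coincides with the coarser, Hausdorff, logic topology.
\item The set $\pi(B_R)=\{q\in\tS_n(T') : d(x_i,x_j)^q<R \text{ for all } i,j\}$ is logic-open, because $d$ is the metric of $M'$.
\item Hence the two topologies of $\tS_n(T')$ agree at every point of $\pi(B_R)$, i.e.\ these types are isolated. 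Letting $R\to\infty$, $M'$ is atomic, hence prime.
\end{enumerate}
This uses only that $d$ is in the language of $M'$, not that $\Aut(M')=G$. After that, no ad hoc back-and-forth is needed. Prime models are homogeneous, so the realisations of $\tp^{T'}(a)$ in $M'$ form $\overline{\Aut(M')a}=[a]$. That is exactly the converse half of \autoref{prp:GroupActionStructureMinimal}, and it is the only place $\Aut(M')=G$ is used. It makes $M'$ bi-definable with $M^G$, hence with $M$.

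Your alternative route does not get around this. Reducing injectivity of $\pi$ to injectivity of $[a]\mapsto\tp^{T'}(a)$ on $M^I\dslash G$, via local isolation in $T$, is correct. But your proposed proof of that injectivity is circular. The ``topology from $M'$-types'' on a set of orbit closures is Hausdorff exactly when that map is injective there, which is the claim itself. Meanwhile, the $d$-topology refining it is automatic from the uniform continuity and $G$-invariance of formulas. So the step you call the main one has no content as stated. What you ultimately defer to, homogeneity of $M'$, again requires the atomicity argument above. You also misplace the role of $\Aut(M')=G$. It plays no part in the topological comparison; it is needed only to identify the $T'$-type fibres with $G$-orbit closures once homogeneity is known.
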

\begin{proof}
  Denote $T = \Th(M)$ and $T' = \Th(M')$.
  Let $G = \Aut(M) = \Aut(M')$.
  Let also $M^G$ be the structure constructed in \autoref{dfn:GroupActionStructure}.
  It is locally $\aleph_0$-categorical by \autoref{thm:GroupActionStructure}.
  By \autoref{cor:LocallyA0CategoricalDefinitionEquivalent}, $M^G$ and $M$ are bi-definable.

  For $R > 0$, define
  \begin{gather*}
    B_R = \bigl\{ p \in \tS_n(T) : d(x_i,x_j)^p < R \ \text{for all} \ i<j<n \bigr\}.
  \end{gather*}
  Since $d$ is localising, $B_R \subseteq \fI_n(T) = M^n \dslash G$ and it is bounded there, so $\overline{B}_R$ is metrically compact.
  The reduct map $\pi_n\colon \tS_n(T) \rightarrow \tS_n(T')$ is $1$-Lipschitz in the respective type metrics, so $B_R'' \coloneq \pi_n(\overline{B}_R)$ is metrically compact.
  Consequently, the metric topology and logic topology coincide on $B_R''$.
  On the other hand, $B_R' \coloneq \pi_n(B_R) \subseteq \tS_n(T')$ is open, since $d$ is definable in $T'$.
  Therefore, for every $p' \in B'_R$, the metric topology and logic topology of $\tS_n(T')$ coincide at $p'$, i.e., $p'$ is an isolated type of $T'$.

  Since $\bigcup_R B_R'$ consists of all $n$-types realised in $M'$, the latter is atomic, and is the prime model of $T'$.
  It is a reduct of $M^G$, and by \autoref{prp:GroupActionStructureMinimal}, the two are bi-definable.
  Therefore, $M'$ and $M$ are bi-definable, so $M'$ is locally $\aleph_0$-categorical as well.
\end{proof}

\begin{cor}
  \label{cor:LocallyA0CategoricalSufficient}
  Let $M$ be a metric $\cL$-structure, and let $G = \Aut(M)$.
  Assume moreover that $M \dslash G$ is compact, $M^n \dslash G$ is a proper metric space for every $n$, and that every atomic $\cL$-formula is properly $G$-invariant in $M$.
  Then $M$ is locally $\aleph_0$-categorical and its metric is localising.

  Special cases where the hypotheses hold include:
  \begin{enumerate}
  \item
    \label{item:LocallyA0CategoricalSufficientRoelckePrecompact}
    When $M \dslash G$ is compact, $G$ is locally Roelcke precompact, $G \curvearrowright M$ is coarsely proper, and every atomic $\cL$-formula is properly $G$-invariant in $M$.
  \item
    \label{item:LocallyA0CategoricalSufficientProper}
    When $M$ is proper as a metric space, $M \dslash G$ is compact, and every atomic $\cL$-formula is properly $G$-invariant in $M$.
  \item
    \label{item:LocallyA0CategoricalSufficientMetric}
    When $(M,d)$ is a pure metric space (i.e., $\cL = \{d\}$), $M \dslash G$ is compact, and $M^n \dslash G$ is proper for each $n$.
  \end{enumerate}
\end{cor}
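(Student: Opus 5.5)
The plan is to compare $M$ with the structure $M^G$ of \autoref{dfn:GroupActionStructure} and then apply \autoref{thm:LocallyA0CategoricalReduct} in reverse. Write $(M,d)$ for the underlying metric space. Since $M \dslash G$ is compact and $M^n \dslash G$ is proper for all $n$, \autoref{thm:GroupActionStructure} shows that $M^G$ is locally $\aleph_0$-categorical with localising metric $d$. Since $G = \Aut(M)$ is a closed subgroup of $\Iso(M)$, \autoref{prp:GroupActionStructureMinimal} gives $\Aut(M^G) = G$.

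The key step is to show that every atomic $\cL$-formula is definable in $M^G$. An atomic formula is a predicate $P \colon M^n \to K$ with $K$ a compact interval, possibly $[-\infty,\infty]$ for the metric. It is uniformly continuous, because predicate symbols come with continuity moduli and function symbols are uniformly continuous. By hypothesis it is also properly $G$-invariant. For the metric of $M^G$, which is $d$, the action $G \actson M$ is coarsely proper by \autoref{cor:LocallyA0CategoricalAutGroup}. So proper $G$-invariance relative to a coarsely compatible pseudo-metric is the notion used in \autoref{thm:LocallyA0CategoricalDefinable}, and that theorem shows $P$ is definable in $M^G$.

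Terms need a separate argument. Atomic formulas built from function symbols are handled by the hypothesis directly, since they are atomic formulas. To make the expansion legitimate, I also need each function symbol $f$ to be definable in $M^G$. Its graph predicate $d(f(x),y)$ is an atomic formula, hence definable, and this suffices to make $f$ a definable function.

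It follows that the expansion $N$ of $M^G$ by the $\cL$-structure is a definitional expansion. So $N$ is bi-definable with $M^G$, and in particular $N$ is locally $\aleph_0$-categorical and prime, with $d$ localising. Now $M$ is the reduct of $N$ to $\cL$. It has the same metric $d$, and $\Aut(M) = G = \Aut(N)$, because $G$ preserves every $\cL$-symbol and $\Aut(N) \subseteq \Aut(M^G) = G$. Then \autoref{thm:LocallyA0CategoricalReduct} yields that $M$ is locally $\aleph_0$-categorical and bi-definable with $N$. Its metric $d$ is localising, because $d$ is localising in $N$ and the relation ``$\tp(a,b)$ isolated'' is the same in bi-definable structures.

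For the special cases, (i) reduces to the main hypotheses by \autoref{thm:LocallyRoelckePrecompactProper}, which gives that $M^n \dslash G$ is proper. In (ii), $G$ is locally compact as a closed subgroup of the isometry group of a proper space. Each set $\{g : d(x,gx) < r\}$ is then relatively compact by Arzelà–Ascoli, and so Roelcke precompact. Item \ref{item:LocallyRoelckePrecompactProperBoundedRpc} of \autoref{thm:LocallyRoelckePrecompactProper} applies, and we reduce to (i). In (iii), the only atomic formulas are $d(x_i,x_j)$, which are $G$-invariant. Along any $g^k \rightsquigarrow \infty$, $d(g^k_I x_i, g^k_J x_j)$ either stays fixed (same class) or tends to $\infty$ (distinct classes, by coarse compatibility), so it converges in $[0,\infty]$.

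I expect the main obstacle to be the bookkeeping for $[0,\infty]$-valued atomic formulas such as $d$ itself. \autoref{thm:LocallyA0CategoricalDefinable} is stated for compact $K$, so I would apply it with $K = [0,\infty]$, after composing with $\theta(t) = 1 - e^{-t}$ to check uniform continuity.
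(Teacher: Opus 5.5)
Your proposal is correct and follows the paper's own route. You build $M^G$ via \autoref{thm:GroupActionStructure}, get definability of the atomic $\cL$-formulas in $M^G$ from \autoref{thm:LocallyA0CategoricalDefinable}, and conclude with \autoref{thm:LocallyA0CategoricalReduct}; your definitional expansion $N$ and the remark on function symbols just make explicit what the paper compresses into ``$M$ is a reduct of $M^G$''. The one real difference is special case (ii): the paper only observes that properness of $M$ directly gives properness of each $M^n \dslash G$, since closed balls there are closed subsets of images of compact balls of $M^n$. That is shorter than your detour through local compactness, Arzel\`a--Ascoli and \autoref{thm:LocallyRoelckePrecompactProper}, though your argument is also valid.
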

\begin{proof}
  By \autoref{thm:GroupActionStructure}, $M^G$ is locally $\aleph_0$-categorical.
  By \autoref{thm:LocallyA0CategoricalDefinable}, every atomic $\cL$-formula is definable in $M^G$, so $M$ is a reduct of $M^G$.
  By \autoref{thm:LocallyA0CategoricalReduct}, $M$ and $M'$ are bi-definable, and $M$ is locally $\aleph_0$-categorical.

  For the special case \autoref{item:LocallyA0CategoricalSufficientRoelckePrecompact}, just apply \autoref{thm:LocallyRoelckePrecompactProper}.
  For \autoref{item:LocallyA0CategoricalSufficientProper}, it is enough to observe that if $M$ is proper, then so is $M^n \dslash G$.
  For \autoref{item:LocallyA0CategoricalSufficientMetric}, note that $d$ is always properly $G$-invariant.
\end{proof}

\begin{cor}
  \label{cor:pure-metric-spaces-loca0}
  Let $(A, d)$ be a complete, separable, unbounded metric space, and let $G = \Iso(A)$.
  Then $(A, d)$ is locally $\aleph_0$-categorical if and only if $A \dslash G$ is compact and $A^n \dslash G$ is proper for all $n \in \bN$.
\end{cor}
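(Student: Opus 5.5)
The proof splits into the two implications, each reducing almost directly to results already established.

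For the backward implication, assume $A \dslash G$ is compact and $A^n \dslash G$ is proper for every $n$. We are then in the situation of special case \autoref{item:LocallyA0CategoricalSufficientMetric} of \autoref{cor:LocallyA0CategoricalSufficient}, with $M = (A,d)$ in the language $\{d\}$. Here $G = \Iso(A) = \Aut(A,d)$, and the only atomic formula is $d$ itself. The metric $d$ is properly $G$-invariant: it is $G$-invariant, and along any sequence with $g^k \rightsquigarrow \infty$ it either stays constant (same class) or tends to $\infty$ (distinct classes), by coarse compatibility. So the corollary yields that $(A,d)$ is locally $\aleph_0$-categorical with $d$ localising. Completeness and separability of $A$ make $(A,d)$ a legitimate separable structure. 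We should also check that the corollary's use of $M^G$ only requires $G \leq \Iso(A)$ closed, which holds since $G$ is the full isometry group.

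For the forward implication, assume $(A,d)$ is locally $\aleph_0$-categorical, so $\Aut(A,d) = \Iso(A) = G$. To apply \autoref{cor:LocallyA0CategoricalAutGroup}, we need the metric to be localising. Here the hypothesis that $A$ is unbounded enters, and I would proceed by cases.

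\begin{itemize}
\item If $\Th(A,d)$ is $\aleph_0$-categorical, then every definable metric is localising (as remarked after \autoref{dfn:LocallyA0Categorical}).
\item Otherwise, \autoref{prp:LocallyA0CategoricalLocalisingUnbounded} says an unbounded definable metric on the local model is localising. The metric $d$ is definable, being an atomic formula, possibly $[0,\infty]$-valued as in \autoref{conv:DefinableMetric}, and it is unbounded by assumption.
\end{itemize}

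Then \autoref{cor:LocallyA0CategoricalAutGroup}\autoref{item:LocallyA0CategoricalAutGroupRealisedTypes} gives that $A^n \dslash G$ is proper for every $n$ and compact for $n=1$.

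The main subtlety is in this forward direction. An unbounded $d$ must be treated as a $[0,\infty]$-valued predicate, and we must make sure that in the $\aleph_0$-categorical case this causes no contradiction: an $\aleph_0$-categorical structure has only bounded definable metrics, so an unbounded $A$ cannot be $\aleph_0$-categorical. That case is therefore vacuous, and only \autoref{prp:LocallyA0CategoricalLocalisingUnbounded} is really needed.
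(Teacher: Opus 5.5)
Your proof is correct and follows the paper's route: the backward direction is \autoref{cor:LocallyA0CategoricalSufficient}\autoref{item:LocallyA0CategoricalSufficientMetric}, and the forward direction combines \autoref{prp:LocallyA0CategoricalLocalisingUnbounded} (the unbounded metric $d$ is localising) with \autoref{cor:LocallyA0CategoricalAutGroup}. Your additional checks, that $d$ is properly $G$-invariant and that the $\aleph_0$-categorical case cannot occur because $d$ is unbounded, spell out details the paper leaves implicit.
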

\begin{proof}
  One direction holds by \autoref{prp:LocallyA0CategoricalLocalisingUnbounded} and \autoref{cor:LocallyA0CategoricalAutGroup}, the other by \autoref{cor:LocallyA0CategoricalSufficient}\autoref{item:LocallyA0CategoricalSufficientMetric}.
\end{proof}

Let $\cL$ be a countable signature in $[-\infty,\infty]$-valued continuous logic.
In particular, the distance symbol $d$ may be $[0,\infty]$-valued, and every symbol has a prescribed uniform continuity modulus relative to $d$.
Let $\cL^\at(n)$ denote the collection of atomic formulas in $n$ variables, and let $\tS^\qf_n(\cL) \subseteq [-\infty,\infty]^{\cL^\at(n)}$ denote the collection of quantifier-free $n$-types in $\cL$, which is a compact space.
For any $\cL$-structure $M$, the quantifier-free type map $\tp^\qf\colon M^n \rightarrow \tS^\qf_n(\cL)$ is uniformly continuous relative to the uniform structure of $\tS^\qf_n(\cL)$ as a compact space.
It factors via a uniformly continuous map $\theta_n\colon M^n \dslash G \rightarrow \tS^\qf_n(\cL)$, where $G = \Aut(M)$.

An $\cL$-structure $M$ is \emph{quantifier-free homogeneous} if $\theta_n$ is injective for every $n$.
Equivalently, if for every $a,b \in M^n$ with the same quantifier-free type, there exist automorphisms that send $a$ arbitrarily close to $b$.
If there always exists an automorphism that sends $a$ to $b$, then $M$ is \emph{exactly quantifier-free homogeneous}.

\begin{cor}
  \label{cor:HomogeneousLocallyA0Categorical}
  Let $M$ be a quantifier-free homogeneous $\cL$-structure with (finite-valued) metric $d$ and let $G = \Aut(M)$.
  Suppose, moreover, that $M \dslash G$ is compact and that every atomic $\cL$-formula is properly $G$-invariant on $M$.
  Finally, suppose that for every $n$ and every closed bounded set $\Xi \sub M^n \dslash G$, the map $\theta_n^{-1} \colon \theta_n(\Xi) \rightarrow M^n \dslash G$ is uniformly continuous, where $\theta_n(\Xi)$ is equipped with the uniform structure induced from the compact space $\tS^\qf_n(\cL)$.

  Then $M$ is locally $\aleph_0$-categorical and the metric $d$ is localising.
\end{cor}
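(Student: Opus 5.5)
The plan is to reduce to \autoref{cor:LocallyA0CategoricalSufficient}. Compactness of $M \dslash G$ and proper $G$-invariance of the atomic formulas are given, so the only missing hypothesis is that $M^n \dslash G$ is a proper metric space for every $n$. Once this is established, that corollary yields both that $M$ is locally $\aleph_0$-categorical and that $d$ is localising.

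To prove properness, fix $n$ and a closed bounded set $\Xi \subseteq M^n \dslash G$; we must show $\Xi$ is compact. Since $\Xi$ is closed in the complete space $M^n \dslash G$, it is complete, so it suffices to show that $\Xi$ is totally bounded. Consider $\theta_n(\Xi) \subseteq \tS^\qf_n(\cL)$. As a subset of a compact space, it is totally bounded for the induced uniform structure. By hypothesis, $\theta_n^{-1}\colon \theta_n(\Xi) \to \Xi$ is uniformly continuous, and $\theta_n$ is injective by quantifier-free homogeneity. A uniformly continuous image of a totally bounded uniform space is totally bounded, so $\Xi = \theta_n^{-1}(\theta_n(\Xi))$ is totally bounded, hence compact.

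A few small points need checking. First, the uniform structure on $\theta_n(\Xi)$ induced from the compact space is indeed precompact, since every entourage of a compact uniform space admits a finite cover by small sets. Second, $M$ should be complete so that $M^n \dslash G$ is complete; I take this as part of the standing convention that structures are complete, since the orbit-closure quotient of a complete space is complete, as noted after \autoref{dfn:OrbitClosureQuotient}. Third, \autoref{cor:LocallyA0CategoricalSufficient} implicitly requires $M$ to be separable with a separable language. This follows from the standing assumption that $\cL$ is countable and from the convention on structures.

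The main obstacle is only conceptual: the argument needs the uniform continuity hypothesis on $\theta_n^{-1}$ and not mere continuity. Continuity alone would not transfer total boundedness from $\theta_n(\Xi)$, which need not be closed in $\tS^\qf_n(\cL)$, to $\Xi$. With uniform continuity available, the argument goes through as described, and no other step presents real difficulty.
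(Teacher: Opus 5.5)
Your proposal is correct and is essentially the paper's proof. The paper also reduces to \autoref{cor:LocallyA0CategoricalSufficient} by showing that every closed bounded $\Xi \sub M^n \dslash G$ is compact, but it phrases the transfer through sequences rather than total boundedness: the $\theta_n$-images of a sequence in $\Xi$ have a Cauchy subsequence in the compact space $\tS^\qf_n(\cL)$, and this pulls back to a Cauchy subsequence in $\Xi$ by uniform continuity of $\theta_n^{-1}$. One small correction: separability of $M$ does not follow from countability of $\cL$; it is an implicit standing assumption here, as it is in the paper.
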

\begin{proof}
  Let $\Xi \sub M^n \dslash G$ be closed and bounded, and let $(p_k)_k$ be a sequence in $\Xi$.
  Then $ \bigl(\theta_n(p_k) \bigr)_k$ is a sequence in the compact space $\tS^\qf_n(\cL)$ so it admits a Cauchy subsequence.
  Therefore, $(p_k)_k$ admits a Cauchy subsequence.
  Since $\Xi$ is also complete, it is compact.

  Therefore $M^n \dslash G$ is proper, so \autoref{cor:LocallyA0CategoricalSufficient} applies.
\end{proof}

This observation is very similar to \cite[Theorem~7]{Zielinski:LocallyRoelckePrecompact}: the condition of \emph{pair-propinquity} defined there is a slightly restricted version of the last condition of \autoref{cor:HomogeneousLocallyA0Categorical}.

\begin{exm}
  The Urysohn space $\bU$ is quantifier-free homogeneous (and even exactly so).
  The last condition of \autoref{cor:HomogeneousLocallyA0Categorical} holds by Uspenskij \cite[Proposition~7.1]{Uspenskij:SubgroupsOfMinimalTopologicalGroups}.
  Therefore $\bU$ is locally $\aleph_0$-categorical and $\Iso(\bU)$ is locally Roelcke precompact.
  The fact that $\Iso(\bU)$ is locally Roelcke precompact was first proved by Rosendal~\cite{Rosendal:CoarseGeometry}.
\end{exm}

\begin{exm}
  The \emph{Urysohn diversity} was introduced by Bryant, Nies, and Tupper in \cite{Bryant-Nies-Tupper:UniversalDiversity}.
  Hallbäck~\cite{Hallback:PhD} proved that its automorphism group is locally Roelcke precompact, and the predicate symbols are properly $G$-invariant essentially for the same reason that the metric is.
  It follows from \autoref{cor:LocallyA0CategoricalSufficient}\autoref{item:LocallyA0CategoricalSufficientRoelckePrecompact} that it is locally $\aleph_0$-categorical.
\end{exm}

\begin{exm}
  \label{exm:proper-metric}
  If $(X,d)$ is a proper metric space with $X \dslash \Iso(X)$ compact, then condition \autoref{item:LocallyA0CategoricalSufficientProper} of \autoref{cor:LocallyA0CategoricalSufficient} holds.
  Therefore, $X$ is locally $\aleph_0$-categorical and its metric is localising.
  Furthermore, all equivalent conditions of \autoref{thm:LocallyCompact} hold in this case.
  Examples include Euclidean spaces, finite-dimensional hyperbolic spaces $\bH^n$, and quasi-transitive locally finite graphs (quasi-transitive means that the automorphism group of the graph has finitely many orbits).
\end{exm}

Many of the spaces in \autoref{exm:proper-metric} admit infinite-dimensional analogues.
For instance, all metrically homogeneous graphs are locally $\aleph_0$-categorical (see also \cite{Rosendal:CoarseGeometry}).

The infinite-dimensional hyperbolic space $\bH^\infty$ has been considered in many places in the literature and its isometry group was studied in detail by Duchesne~\cite{Duchesne:InfiniteDimensionalHyperbolicSpace}.
We will need the following three basic facts about it (see \cite{Burger-Iozzi-Monod:Hyperbolic} and \cite{Duchesne:InfiniteDimensionalHyperbolicSpace}).
\begin{itemize}
\item It is exactly quantifier-free homogeneous: every isometry between finite subsets of $\bH^\infty$ extends to an element of $G = \Iso(\bH^\infty)$.
  In particular, the action $G \curvearrowright \bH^\infty$ is transitive.
\item It is a geodesic metric space.
\item For every point $x \in \bH^\infty$, the stabiliser $G_x$ is isomorphic to the orthogonal group $O(\infty)$.
\end{itemize}

\begin{exm}
  \label{exm:HyperbolicSpace}
  The metric space $\bH^\infty$ is locally $\aleph_0$-categorical, its metric is localising, and its isometry group $G = \Iso(\bH^\infty)$ is locally Roelcke precompact.

  Indeed, let $x \in \bH^\infty$.
  Define $\varphi \colon \bH^\infty \dslash G_x \to [0, \infty)$ by $\varphi([y]) = d(x, y)$.
  It follows from the triangle inequality that $\varphi$ is a contraction.
  It is also surjective since $\bH^\infty$ is geodesic and unbounded.
  It is injective by exact homogeneity.
  Finally, assume that $0 \leq \alpha < \beta = d(x,y)$.
  Since $\bH^\infty$ is geodesic, there exists $y' \in \bH^\infty$ with $d(x, y') = \alpha$ and $d(y', y) = \beta - \alpha$.
  Then $\varphi(y) = \beta$, $\varphi(y') = \alpha$ and $d\bigl( [y], [y'] \bigr) = \beta - \alpha$.
  Therefore, $\varphi$ is an isometry.

  It follows that $\bH^\infty \dslash G_x$ is a proper metric space.
  Furthermore, $G_x$ is isomorphic to the orthogonal group, which is Roelcke precompact.
  By \autoref{prp:lRpcStabiliser}, $G$ is locally Roelcke precompact and $G \curvearrowright \bH^\infty$ is coarsely proper.
  By \autoref{cor:LocallyA0CategoricalSufficient}, $\bH^\infty$ is locally $\aleph_0$-categorical with a localising metric.

  The fact that $\Iso(\bH^\infty)$ is locally Roelcke precompact was first proved by Barritault.
\end{exm}

Banach spaces provide another important class of examples.
Let $\Iso(E)$ denote group of isometries of a Banach space $E$, and let $\Iso_0(E) \leq \Iso(E)$ denote the stabiliser of the origin.
By the Mazur--Ulam theorem \cite{Mazur-Ulam:TransformationsIsometriques}, $\Iso(E)$ coincides with the group of affine isometries of $E$, and $\Iso_0(E)$ with the group of linear isometries.

In order to view $E$ as a metric structure in the sense of continuous logic, one may consider the unit ball $E_1$, equipped with its metric convex structure (see \cite[2.1 Examples (4)]{BenYaacov-Berenstein-Henson-Usvyatsov:NewtonMS}).
This unit ball structure is what is generally referred to as a \emph{Banach space} in the context of continuous logic.
Its automorphism group is $\Iso_0(E)$.
It follows from Mankiewicz's theorem \cite{Mankiewicz:ExtensionsOfIsometries} that this group also coincides with the isometry group $\Iso_0(E_1)$ (i.e., the group of all isometries of $E_1$ fixing $0$).

Alternatively, one may consider the entire space $E$ as an unbounded metric space, whose automorphism group is $\Iso(E) = (E,+) \rtimes \Iso_0(E)$.
We will refer to this structure as an \emph{affine Banach space}.

\begin{thm}
  \label{thm:BanachSpaceExample}
  Let $E$ be a separable Banach space
  Then the following are equivalent:
  \begin{enumerate}
  \item
    The affine Banach space $E$ (i.e., $E$ as a pure metric space) is locally $\aleph_0$-categorical.
  \item
    The group $\Iso(E)$ is locally Roelcke precompact and the action $\Iso(E) \curvearrowright E$ is coarsely proper.
  \item
    The group $\Iso_0(E)$ is Roelcke precompact and $E_1 \dslash \Iso_0(E)$ is compact.
  \item
    The closed unit ball structure associated to $E$ (namely, $E_1$ with its metric convex structure) is $\aleph_0$-categorical.
  \item
    The pointed metric space $(E_1,0)$ is $\aleph_0$-categorical.
  \end{enumerate}
\end{thm}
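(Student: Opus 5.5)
We plan to prove the cycle of equivalences by passing through the group-theoretic conditions, using \autoref{cor:pure-metric-spaces-loca0}, \autoref{thm:LocallyRoelckePrecompactProper}, and \autoref{prp:lRpcStabiliser} for the affine side, and the classical Ryll-Nardzewski Fact from the introduction for the unit ball side. Throughout, $G = \Iso(E)$. This group acts transitively on $E$ by translations, and the stabiliser of $0$ is $G_0 = \Iso_0(E)$.

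For (i)$\Leftrightarrow$(ii), note that $E \dslash G$ is a single point, hence compact, and $E$ is unbounded (assuming $E \neq 0$; the trivial case is immediate). By \autoref{cor:pure-metric-spaces-loca0}, (i) is equivalent to properness of $E^n \dslash G$ for all $n$. By \autoref{thm:LocallyRoelckePrecompactProper}, this is in turn equivalent to (ii).

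For (ii)$\Leftrightarrow$(iii), we apply \autoref{prp:lRpcStabiliser} with $x = 0$: condition (ii) holds if and only if $\Iso_0(E)$ is Roelcke precompact and $E \dslash \Iso_0(E)$ is proper. It therefore remains to check that, since $\Iso_0(E)$ acts linearly, $E \dslash \Iso_0(E)$ is proper if and only if $E_1 \dslash \Iso_0(E)$ is compact.
\begin{itemize}
\item If the quotient $E \dslash \Iso_0(E)$ is proper, then the image of the bounded set $E_1$ is closed and bounded, hence compact.
\item Conversely, the ball of radius $r$ is $rE_1$, and scaling by $r$ commutes with linear isometries. It therefore induces an $r$-Lipschitz surjection from $E_1 \dslash \Iso_0(E)$ onto the image of $rE_1$, so bounded sets in the quotient have compact closure.
\end{itemize}

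For (iii)$\Leftrightarrow$(iv), we use the Fact from the introduction: the separable structure $E_1$ (with the convex structure) is $\aleph_0$-categorical if and only if $E_1 \dslash \Aut(E_1)$ is compact and $\Aut(E_1)$ is Roelcke precompact. Here $\Aut(E_1) = \Iso_0(E)$, acting on $E_1$ by restriction, and the topology of pointwise convergence on $E_1$ agrees with that on $E$ by linearity. So this step is direct.

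For (iv)$\Leftrightarrow$(v), we use Mankiewicz's theorem, by which $\Aut(E_1,0) = \Iso_0(E_1) = \Iso_0(E)$ with the same topology. Applying the same Fact to the pointed metric space $(E_1,0)$, whose orbit-closure space on $E_1$ is the same $E_1 \dslash \Iso_0(E)$, shows that (v) is equivalent to (iii).

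The main obstacle is making sure each structure in (iv) and (v) satisfies the hypotheses of the Fact: it must be separable, and its automorphism group must be exactly $\Iso_0(E)$ with the correct Polish topology. For (v), this identification relies on Mankiewicz's theorem. For (iv), one must check that automorphisms of the convex structure on $E_1$ extend to linear isometries of $E$. I expect this to be routine.
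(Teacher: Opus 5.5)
Your proposal is correct and follows the paper's proof essentially step for step. The paper uses \autoref{cor:pure-metric-spaces-loca0} with \autoref{thm:LocallyRoelckePrecompactProper} for (i)$\Leftrightarrow$(ii), \autoref{prp:lRpcStabiliser} at the origin plus the scaling observation for (ii)$\Leftrightarrow$(iii), and the Ryll-Nardzewski/Roelcke precompactness Fact together with Mankiewicz's theorem for (iii)$\Leftrightarrow$(iv)$\Leftrightarrow$(v). The one claim you leave unjustified, that the image of $E_1$ in $E \dslash \Iso_0(E)$ is closed, is immediate: since $d\bigl([x],[0]\bigr) = \|x\|$, that image is the closed ball of radius $1$ about $[0]$.
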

\begin{proof}
  \begin{cycprf}
  \item[\eqnext] This follows from \autoref{cor:pure-metric-spaces-loca0} (and \autoref{thm:LocallyRoelckePrecompactProper}).
  \item[\eqnext] By \autoref{prp:lRpcStabiliser}, observing that $E \dslash \Iso_0(E)$ is a proper metric space if and only if $E_1 \dslash \Iso_0(E)$ is compact.
  \item[\eqnnext] This follows from \cite{BenYaacov-Tsankov:WAP} and Mankiewicz's theorem mentioned above.
  \end{cycprf}
\end{proof}

Examples of $\aleph_0$-categorical Banach spaces to which this applies are $L^p$ spaces (for $1 \leq p < \infty$) and the Gurarij space.
It is interesting to note that for $p \neq 2$, the groups $\Iso_0(L^p(\bR))$ are all isomorphic, so (the unit balls of) the spaces $L^p(\bR)$ are bi-interpretable.
This is not the case for the affine spaces: $L^p(X)$ and $L^q(X)$ are not coarsely equivalent for $p \neq q$ (see the discussion before Example~3.18 in \cite{Rosendal:CoarseGeometry}).
Therefore the affine Banach space $L^p(X)$ and $L^q(X)$ are not bi-interpretable for $p \neq q$, by \autoref{c:interpr-implies-coarse-eq}.
For the Gurarij space, the question whether its affine isometry group is locally Roelcke precompact is listed in \cite{Hallback:PhD} as an open problem.

\bibliographystyle{begnac}
\bibliography{LocCat}

\end{document}